\newtheorem{theorem}{Theorem}[section]
\newtheorem{lemma}[theorem]{Lemma}
\newtheorem{prop}[theorem]{Proposition}
\newtheorem{corollary}[theorem]{Corollary}
\newtheorem{exAux}[theorem]{Example}
\newtheorem{Def}[theorem]{Definition}
\newenvironment{defi}{\begin{Def} \rm}{\end{Def}}
\newtheorem{Note}[theorem]{Note}
\newenvironment{note}{\begin{Note} \rm}{\end{Note}} 
\newtheorem{Problem}[theorem]{Problem}
\newtheorem{Rem}[theorem]{Remark}
\newtheorem{Not}[theorem]{Notation}
\newtheorem{Conj}[theorem]{Conjecture}
\newtheorem{Ass}[theorem]{Assumption}
\newenvironment{proof}{\medskip\noindent{\bf Proof.\ }}{\qed\medskip}
\newcommand{\qed}{\hfill\mbox{$\Box$\qquad\qquad}}
\newcommand{\F}{{\mathbb F}}
\newcommand{\vphi}{\varphi}
\renewcommand{\th}{\theta}
\newif\ifDRAFT
\begin{document}

\title{Spin Leonard pairs and the zero diagonal space}
\author{Kazumasa Nomura and Paul Terwilliger}

\maketitle

\begin{abstract}
Let  $\F$ denote a field,
and let $V$ denote a vector space over $\F$ with finite positive dimension.
A Leonard pair on $V$ is an ordered pair of diagonalizable linear maps
$A : V \to V$ and
$A^* : V \to V$
that each act on an eigenbasis of the other one in an irreducible tridiagonal fashion.
A Leonard pair $A,A^*$ on $V$ is said to have spin whenever
there exist invertible linear maps $W : V \to V$ and
$W^* : V \to V$
such that
$W A = A W$
and
$W^* A^* = A^* W^*$
and
$W A^* W^{-1} = (W^*)^{-1} A W^*$.
Let $\{\th^*_i\}_{i=0}^d$ denote a standard ordering of the eigenvalues of $A^*$.
There is a related sequence of scalars $\{a_i\}_{i=0}^d$  called intersection numbers.
The Leonard pair $A,A^*$ is called self-dual whenever $\{\th^*_i\}_{i=0}^d$ is
a standard ordering of the eigenvalues of $A$.
We obtain the following results under the assumption that
$\F$ is algebraically closed and $d \geq 3$.
We show that a Leonard pair $A,A^*$ on $V$ has spin 
if and only if both
(i)
$A,A^*$ is self-dual;
(ii)
there exist scalars $f_0,f_1,f_2, f_3$ (not all zero) such that
$
 f_0 + f_1 \th^*_i + f_2 a_i + f_3 a_i \th^*_i = 0$
for $0 \leq i \leq d$.
We also classify the Leonard pairs $A,A^*$ on $V$ that satisfy (ii)
without assuming (i).
To do this we bring in the following maps.
For $0 \leq i \leq d$ let $E^*_i : V \to V$ denote the projection onto
the $\th^*_i$-eigenspace of $A^*$.
Let ${\mathcal Z}(A,A^*)$ denote the set of elements $X$ 
in $\text{Span}\{I, A^*, A, A A^*\}$
such that $E^*_i X E^*_i= 0$ for $0 \leq i \leq d$.
We call
${\mathcal Z}(A,A^*)$ 
the zero diagonal space of $A,A^*$.
As we will see,
${\mathcal Z}(A,A^*) \neq 0$ 
if and only if the above condition (ii) holds.
As we investigate the case   ${\mathcal Z}(A,A^*) \neq 0$ in detail,
we break the problem into 13 cases called types;
these are the $q$-Racah type and its relatives.
For each type we give a necessary and sufficient condition 
for ${\mathcal Z}(A,A^*) \not=0$. 
For each type we give an explicit basis for  ${\mathcal Z}(A,A^*)$.
\end{abstract}

\section{Introduction}
\label{sec:intro}
\ifDRAFT {\rm sec:intro}. \fi

The notion of a Leonard pair was introduced by the second author in \cite{T:Leonard}.
A Leonard pair is described as follows.
Let $\F$ denote a field,
and let $V$ denote a vector space over $\F$ with finite positive dimension.
A Leonard pair on $V$ is an ordered pair of diagonalizable linear maps 
$A : V \to V$ and $A^* : V \to V$  
that each act on an eigenbasis of the other one
in an irreducible tridiagonal fashion \cite[Definition 1.1]{T:Leonard}.
The Leonard pairs are classified in  \cite[Theorem 1.9]{T:Leonard},
and the solutions correspond to the  orthogonal polynomials
in the terminating branch of the Askey scheme  \cite[Section 5]{T:parray}, \cite{T:survey}.

In the present paper we  consider a type of Leonard pair, said to have spin.
The spin condition is described as follows.
According to \cite[Definition 1.2]{Cur},
a Leonard pair $A,A^*$ on $V$ has spin
whenever there exist invertible linear maps $W: V \to V$ and $W^* : V \to V$
such that
$W A = A W$ and
$W^* A^* = A^* W^*$ and
$W A^* W^{-1} = (W^*)^{-1} A W^*$.
The spin Leonard pairs are classified in \cite[Theorem 1.13]{Cur}. 
For more information about spin Leonard pairs, see \cite{CaW, Cur, Cur2, NT:spin, NT:spin2}.

As we describe the features of a spin Leonard pair,
we will use the the following parameters.
Let $A,A^*$ denote a Leonard pair on $V$.
Let $\{\th^*_i\}_{i=0}^d$
denote a standard ordering of the eigenvalues of $A^*$ (see the paragraph  above Definition \ref{def:LS}).
Associated with $\{\th^*_i\}_{i=0}^d$ is a sequence of scalars $\{a_i\}_{i=0}^d$ called
intersection numbers (see Definition \ref{def:ai}).
The Leonard pair $A,A^*$ is said to be self-dual whenever $\{\th^*_i\}_{i=0}^d$ is
a standard ordering of the eigenvalues of $A$.
According to \cite[Theorem 1.9]{Cur} and  \cite[Lemma 9.2]{Cur2},
if $A,A^*$ has spin then
\begin{itemize}
\item[(i)]
 $A,A^*$ is self-dual.
\end{itemize}
By \cite[Lemmas 1.7--1.11, Theorem 1.13]{Cur},
if $A,A^*$ has spin then
\begin{itemize}
\item[(ii)]
there exist scalars $f_0,f_1,f_2,f_3$ (not all zero) such that
\[
 f_0 + f_1 \th^*_i + f_2 a_i + f_3 a_i \th^*_i = 0
 \qquad \qquad (0 \leq i \leq d).        
\]
\end{itemize}

In the present paper,
we obtain the following results under the assumption that $\F$ is algebraically closed
and $d \geq 3$.
We show that a Leonard pair  $A,A^*$ on $V$ has spin if and only if both
(i), (ii) hold.
Also, we  classify the Leonard
pairs $A, A^*$ on $V$ that satisfy (ii),
without assuming (i).
To this end,  it is convenient to bring in the following maps.
For $0 \leq i \leq d$ let $E^*_i : V \to V$ denote the projection onto the
$\th^*_i$-eigenspace of $A^*$.
An element $X \in \text{Span} \{ I,A,A^*, AA^*, A^*A\}$
is said to have zero diagonal whenever $E^*_i X E^*_i = 0$ for $0 \leq i \leq d$.
As we will see in Lemma \ref{lem:AAsmAsA}, the element $AA^*-A^*A$ has zero diagonal.
Let ${\mathcal Z}(A,A^*)$ denote the set
of elements in $\text{Span} \{I, A^*, A, AA^*\}$
that have zero diagonal. 
Note that  ${\mathcal Z}(A,A^*)$ is a subspace of  $\text{Span} \{I, A^*, A, AA^*\}$.
We call   ${\mathcal Z}(A,A^*)$ the zero diagonal space of $A,A^*$.
We show that the dimension of  ${\mathcal Z}(A,A^*)$ is at most $2$.
We show that ${\mathcal Z}(A,A^*)\not=0$ if and only 
if condition (ii) holds above if and only if
\[
 (a_i - a_0)(\th^*_i - \th^*_d)(a_j - a_d)(\th^*_j - \th^*_0)
= (a_i - a_d)(\th^*_i - \th^*_0)(a_j - a_0)(\th^*_j - \th^*_d)
\]
for $0 \leq i,j \leq d$.

As we describe ${\mathcal Z}(A,A^*)$  in
depth, we break the problem into 13 cases called types:
$q$-Racah,
$q$-Hahn,
dual $q$-Hahn,
quantum $q$-Krawtchouk,
$q$-Krawtchouk,
affine $q$-Krawtchouk,
dual $q$-Krawtchouk,
Racah,
Hahn,
dual Hahn,
Krawtchouk,
Bannai/Ito,
Orphan.
For each type $A,A^*$ is described by some parameters such as $s,s^*, r_1, r_2$.
For each type we give a necessary and sufficient condition on these parameters
for ${\mathcal Z}(A,A^*) \not=0$. 
For  each type
we give an explicit basis for ${\mathcal Z}(A,A^*)$.
The main results of this paper are Theorems \ref{thm:main1}--\ref{thm:main} and Theorem 
\ref{thm:characterize}.

This paper is organized as follows.
Section \ref{sec:prili} contains some preliminaries.
In Section \ref{sec:LP} we recall some basic results about a Leonard pair $A,A^*$.
In Sections \ref{sec:ZD}, \ref{sec:M} we introduce the zero diagonal space ${\mathcal Z}(A,A^*)$
and discuss its basic properties.
In Section \ref{sec:soleqZ} we describe  ${\mathcal Z}(A,A^*)$ using the scalars
$a^-_i = (a_i - a_0)(\th^*_i - \th^*_d)$ and
 $a^+_i = (a_i - a_d)(\th^*_i - \th^*_0)$ for $0 \leq i \leq d$.
In Section \ref{sec:types} we describe the $13$ types.
In Sections \ref{sec:neweq}, \ref{sec:main}, 
for each type we obtain a necessary and sufficient condition for ${\mathcal Z}(A,A^*)\neq 0$.
In Section \ref{sec:dimZ2} we describe the Leonard pairs $A,A^*$ such that ${\mathcal Z}(A,A^*)$ has dimension $2$.
In Sections \ref{sec:rel}, \ref{sec:rev}
we describe the Leonard pairs $A,A^*$ such that ${\mathcal Z}(A,A^*)$ has dimension $1$.
In Section \ref{sec:SD} we describe the self-dual condition.
In Section \ref{sec:spinLP} we show that a Leonard pair $A,A^*$ has spin if
and only if $A,A^*$ is self-dual and ${\mathcal Z}(A,A^*) \neq 0$.

\section{Preliminaries}
\label{sec:prili}
\ifDRAFT {\rm sec:preli}. \fi

Throughout the paper, 
the following assumptions and notational conventions are in force.
Let $\F$ denote an algebraically closed  field.
By a {\em scalar} we mean an element of $\F$.
Let  $V$ denote a vector space over $\F$ with finite positive dimension.
Let $\text{\rm End}(V)$ denote the $\F$-algebra consisting of the  $\F$-linear maps $V \to V$.
Let $I \in \text{\rm End}(V)$ denote the identity map.
An element $A \in \text{End}(V)$ is said to be {\em diagonalizable} whenever $V$ is spanned by
the eigenspaces of $A$.
An element $A \in \text{End}(V)$ is said to be {\em multiplicity-free} 
whenever $A$ is diagonalizable and each eigenspace of $A$  has dimension one.
Assume that $A$ is multiplicity-free,  and
let $\{V_i\}_{i=0}^d$ denote an ordering of the eigenspaces of $A$.
The sum $V = \sum_{i=0}^d V_i$ is direct.
For $0 \leq i \leq d$ let $\th_i$ denote the eigenvalue of $A$ for $V_i$.
The scalars  $\{\th_i\}_{i=0}^d$ are mutually distinct.
For $0 \leq i \leq d$ define $E_i \in \text{End}(V)$ such that
$(E_i - I) V_i = 0$ and 
$E_i V_j = 0$ if $j \neq i$ $(0 \leq j \leq d)$.
Then
(i) $ A E_i = E_i A = \th_i E_i$ $(0 \leq i \leq d)$;
(ii) $E_i E_j = \delta_{i,j}E_i$ $(0 \leq i,j \leq d)$;
(iii) $\sum_{i=0}^d E_i = I$;
(iv) $A =\sum_{i=0}^d \th_i E_i$.
We call  $E_i$ the {\em primitive idempotent of $A$ associated with $\th_i$} $(0 \leq i \leq d)$.
By linear algebra,
\begin{equation}
E_i = \prod_{\text{\scriptsize $\begin{matrix}  0 \leq j \leq d \\ j \neq i \end{matrix}$} }
           \frac{A-\th_j I} { \th_i - \th_j}
\qquad\qquad ( 0 \leq i \leq d).               \label{eq:Ei}
\end{equation}

Let $\F^{d+1}$ denote the vector space over $\F$
consisting of the row vectors with  $d+1$ coordinates and all entries in $\F$.

\section{Leonard pairs and Leonard systems}
\label{sec:LP}
\ifDRAFT {\rm sec:LP}. \fi

In this section, we recall the definition and basic properties
of a Leonard pair and a Leonard system.
For more information see \cite{T:Leonard}, \cite{T:survey}, \cite{T:notes}.

\begin{defi}    {\rm (See \cite[Definition 1.1]{T:Leonard}.) }
\label{def:LP}  \samepage
\ifDRAFT {\rm def:LP}. \fi
A {\em Leonard pair on $V$} is an ordered  pair $A, A^*$ of elements in $\text{\rm End}(V)$
that satisfy the following {\rm (i), (ii)}:
\begin{itemize}
\item[\rm (i)]
there exists a basis for $V$ with respect to which the matrix representing $A$ is irreducible
tridiagonal and the matrix representing $A^*$ is diagonal;
\item[\rm (ii)]
there exists a basis for $V$ with respect to which the matrix representing $A^*$ is irreducible
tridiagonal and the matrix representing $A$ is diagonal.
\end{itemize}
\end{defi}

\begin{lemma}   {\rm (See \cite[Lemma 1.3]{T:Leonard}.) }
\label{lem:mfree}    \samepage
Let $A,A^*$ denote a Leonard pair on $V$.
Then each of  $A, A^*$  is multiplicity-free.
\end{lemma}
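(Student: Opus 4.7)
The plan is to argue that each of $A$ and $A^*$ is diagonalizable and has a minimal polynomial of maximal possible degree, so that all its eigenspaces are one-dimensional. By the symmetric roles of $A$ and $A^*$ in Definition \ref{def:LP}, it suffices to treat $A$.

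First I would observe that condition (ii) of Definition \ref{def:LP} exhibits a basis of $V$ with respect to which $A$ is represented by a diagonal matrix; thus $A$ is diagonalizable. Write $d+1 = \dim V$. To show that $A$ is multiplicity-free, it remains to show that $A$ has $d+1$ mutually distinct eigenvalues; equivalently (since $A$ is diagonalizable) that the minimal polynomial of $A$ has degree $d+1$.

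Next I would invoke condition (i) of Definition \ref{def:LP}: there is a basis $v_0, v_1, \ldots, v_d$ of $V$ such that the matrix of $A$ in this basis is irreducible tridiagonal, i.e.\ has nonzero entries immediately above and below the main diagonal and zeros elsewhere. I would then prove by induction on $i$ that for $0 \le i \le d$ the vector $A^i v_0$ lies in the span of $v_0, v_1, \ldots, v_i$ and has a \emph{nonzero} coefficient on $v_i$. The base case $i=0$ is trivial, and the inductive step uses the fact that the $(i+1,i)$-entry of the tridiagonal matrix of $A$ (the subdiagonal entry) is nonzero. It follows that $v_0, A v_0, A^2 v_0, \ldots, A^d v_0$ is a basis of $V$. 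Hence the vectors $I, A, A^2, \ldots, A^d$ are linearly independent in $\mathrm{End}(V)$, so the minimal polynomial of $A$ has degree at least, and therefore equal to, $d+1$.

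Combining the two observations: $A$ is diagonalizable and its minimal polynomial has degree $d+1$. Since the minimal polynomial of a diagonalizable element is the product of the distinct linear factors $(x-\lambda)$ as $\lambda$ ranges over its eigenvalues, $A$ has $d+1$ distinct eigenvalues and is therefore multiplicity-free. Applying the same argument with the roles of $A$ and $A^*$ interchanged yields the claim for $A^*$. The only mildly delicate step is the induction showing that the Krylov vectors $\{A^i v_0\}_{i=0}^d$ are linearly independent; this is where the irreducibility of the tridiagonal form is essential, since without the nonvanishing subdiagonal entries the minimal polynomial could have smaller degree.
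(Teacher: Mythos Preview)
Your proof is correct. Note, however, that the paper itself does not give a proof of this lemma; it simply cites \cite[Lemma~1.3]{T:Leonard}. Your argument is essentially the standard one given in that reference: use the diagonal form from one condition in Definition~\ref{def:LP} to obtain diagonalizability, and use the irreducible tridiagonal form from the other condition to show (via the Krylov sequence $\{A^i v_0\}_{i=0}^d$) that the minimal polynomial has degree $d+1$. So there is no meaningful difference in approach to report.
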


\begin{lemma}   {\rm (See \cite[Corollary 5.5]{T:survey}.) }
\label{lem:generate}   \samepage
\ifDRAFT {\rm lem:generate}. \fi
Let $A,A^*$ denote a Leonard pair on $V$.
Then the elements $A,A^*$ together generate $\text{\rm End}(V)$.
\end{lemma}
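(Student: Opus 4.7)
Let $\mathcal{A}$ denote the subalgebra of $\text{End}(V)$ generated by $A$ and $A^*$. The plan is to show $\mathcal{A} = \text{End}(V)$ by verifying that $\mathcal{A}$ contains every ``matrix unit'' relative to a diagonalizing basis of $A^*$. Since by Lemma \ref{lem:mfree} both $A$ and $A^*$ are multiplicity-free, the formula \eqref{eq:Ei} shows that each primitive idempotent $E_i$ of $A$ is a polynomial in $A$, and each primitive idempotent $E^*_i$ of $A^*$ is a polynomial in $A^*$. Hence $E_i, E^*_j \in \mathcal{A}$ for $0 \leq i,j \leq d$.

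Next I would exploit the dimension count. For each $i$, the eigenspace $E^*_i V$ is one-dimensional by multiplicity-freeness, so the subspace $E^*_i \text{End}(V) E^*_j$ has dimension one. Using the idempotent decomposition $I = \sum_i E^*_i$, we get
\[
 \text{End}(V) = \bigoplus_{i=0}^d \bigoplus_{j=0}^d E^*_i \, \text{End}(V)\, E^*_j,
\]
a direct sum of $(d+1)^2$ one-dimensional subspaces. Therefore it suffices to prove that for every pair $(i,j)$ the intersection $E^*_i \mathcal{A} E^*_j$ is nonzero.

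The key step is to produce an element of $\mathcal{A}$ witnessing this nonvanishing, and the natural candidate is $E^*_i A^{|i-j|} E^*_j$. By Definition \ref{def:LP}(ii) there is a basis of $V$ diagonalizing $A^*$ (with $E^*_i$ the projection onto the $i$-th coordinate) in which $A$ is represented by an irreducible tridiagonal matrix; in particular, each super- and subdiagonal entry of this matrix is nonzero. Computing a matrix power, the $(i,j)$-entry of $A^n$ is a sum over length-$n$ walks $i = k_0, k_1, \dots, k_n = j$ with $|k_\ell - k_{\ell+1}| \leq 1$, weighted by the product of the corresponding entries of $A$. When $n = |i-j|$ there is a unique such walk, namely the monotone one, and the product along it is nonzero. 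Hence $E^*_i A^{|i-j|} E^*_j \neq 0$.

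This element lies in $\mathcal{A}$ since $A, E^*_i, E^*_j \in \mathcal{A}$, so it spans the one-dimensional space $E^*_i \text{End}(V) E^*_j$. Summing over all pairs $(i,j)$ gives $\mathcal{A} \supseteq \text{End}(V)$, completing the proof. The only mildly delicate point is the combinatorial observation that the tridiagonal walk count for the extremal distance $n = |i-j|$ reduces to a single monomial in the super- and subdiagonal entries; everything else is bookkeeping with idempotents.
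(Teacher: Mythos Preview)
Your argument is correct and self-contained; the paper itself gives no proof here, only the citation to \cite[Corollary~5.5]{T:survey}. One small slip: you invoke Definition~\ref{def:LP}(ii), but the basis you actually use---$A^*$ diagonal and $A$ irreducible tridiagonal---is the one guaranteed by Definition~\ref{def:LP}(i). With that reference corrected, the idempotent decomposition $\text{End}(V) = \bigoplus_{i,j} E^*_i\,\text{End}(V)\,E^*_j$ and the unique-monotone-walk computation showing $E^*_i A^{|i-j|} E^*_j \neq 0$ go through cleanly. This is the standard route to the result and is essentially what the cited reference does.
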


We recall the notion of isomorphism for Leonard pairs.
Let $A,A^*$ denote a Leonard pair on $V$. Let $V'$ denote a vector space
over $\F$ with finite  positive  dimension, and let 
$A', A^{*\prime}$ denote a Leonard pair on $V'$.
By an {\em isomorphism of Leonard pairs from $A,A^*$ to $A', A^{*\prime}$}
we mean an $\F$-algebra isomorphism $\sigma  : \text{End}(V) \to \text{End}(V')$ such that
$A^\sigma = A'$ and $(A^*)^\sigma = A^{* \prime}$.
The Leonard pairs $A,A^*$ and $A', A^{* \prime}$ are said to be {\em isomorphic} whenever there exists an
isomorphism of Leonard pairs from $A,A^*$ to $A', A^{* \prime}$.

Next we recall the notion of a Leonard system.
We will use the following notation.
Let $A,A^*$ denote a Leonard pair on $V$.
Let $\{\th_i\}_{i=0}^d$\
 denote an ordering of the eigenvalues of $A$.
For $0 \leq i \leq d$ let $v_i$  denote an eigenvector for $A$
corresponding to $\th_i$.
We say that the ordering $\{\th_i\}_{i=0}^d$  is {\em standard}
whenever $\{v_i\}_{i=0}^d$
satisfies Definition \ref{def:LP}(ii).
For $0 \leq i \leq d$ let $E_i$ denote the primitive idempotent of $A$
associated with $\th_i$.
The ordering $\{E_i\}_{i=0}^d$  is said to be {\em standard}
whenever the ordering $\{\th_i\}_{i=0}^d$ is standard.
If an ordering $\{E_i\}_{i=0}^d$ is standard then the ordering $\{E_{d-i}\}_{i=0}^d$
is standard and no further ordering is standard.
A standard ordering of the primitive idempotents of $A^*$
is similarly defined.

\begin{defi}   {\rm (See \cite[Definition 3.1]{NT:sd}.) }
\label{def:LS}    \samepage
\ifDRAFT {\rm def:LS}. \fi
By a {\em Leonard system on $V$} we mean a sequence
\[
 \Phi = ( A; \{E_i\}_{i=0}^d; A^*; \{E^*_i\}_{i=0}^d)
\]
that satisfies the following {\rm (i)--(iii)}:
\begin{itemize}
\item[\rm (i)]
$A,A^*$ is a Leonard pair on $V$;
\item[\rm (ii)]
$\{E_i\}_{i=0}^d$ is a standard ordering of the primitive idempotents of $A$;
\item[\rm (iii)]
$\{E^*_i\}_{i=0}^d$ is a standard ordering of the primitive idempotents of $A^*$.
\end{itemize}
We say that the Leonard pair $A,A^*$ and the Leonard system $\Phi $ are {\em associated}.
\end{defi}

For the rest of this section, let
\[
 \Phi = ( A; \{E_i\}_{i=0}^d; A^*; \{E^*_i\}_{i=0}^d)
\]
denote a Leonard system on $V$.

\begin{lemma}    {\rm (See \cite[Lemma 3.3]{TV}.) }
\label{lem:trid0}   \samepage
\ifDRAFT {\rm lem:trid0}. \fi
We have
\begin{align*}
E_i A^* E_j &=
 \begin{cases}
   0 & \text{ if $|i-j|>1$},
\\ 
  \neq 0 & \text{ if $|i-j|= 1$}
\end{cases}
\qquad\qquad
( 0 \leq i,j \leq d);
\\
E^*_i A E^*_j &=
 \begin{cases}
   0 & \text{ if $|i-j|>1$},
  \\ 
  \neq 0 & \text{ if $|i-j|= 1$}
\end{cases}
\qquad\qquad
( 0 \leq i,j \leq d).
\end{align*}
\end{lemma}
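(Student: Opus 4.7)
The plan is to unpack Definition \ref{def:LP}(ii) into explicit matrix data, and then translate the tridiagonal shape of $A^*$ on that basis into the vanishing/non-vanishing statements about $E_i A^* E_j$. The second assertion follows by symmetric reasoning using Definition \ref{def:LP}(i).

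In detail, I would proceed as follows. Since $\{E_i\}_{i=0}^d$ is a standard ordering of the primitive idempotents of $A$, the corresponding ordering $\{\th_i\}_{i=0}^d$ of the eigenvalues of $A$ is standard. By definition of standard ordering, I can pick eigenvectors $v_i$ for $A$ with eigenvalue $\th_i$ such that $\{v_i\}_{i=0}^d$ satisfies Definition \ref{def:LP}(ii); that is, with respect to this basis the matrix of $A^*$ is irreducible tridiagonal. Thus there are scalars $c_{ij}$ $(0 \leq i,j \leq d)$ with $c_{ij} = 0$ when $|i-j| > 1$ and $c_{ij} \neq 0$ when $|i-j| = 1$, satisfying
\[
A^* v_j = \sum_{i=0}^d c_{ij} v_i \qquad (0 \leq j \leq d).
\]
By the properties of primitive idempotents, $E_r v_k = \delta_{rk} v_k$ for $0 \leq r,k \leq d$.

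Fix $i,j$ and apply $E_i A^* E_j$ to a basis vector $v_k$. If $k \neq j$ then $E_j v_k = 0$ and hence $E_i A^* E_j v_k = 0$. If $k = j$ then
\[
E_i A^* E_j v_j \;=\; E_i A^* v_j \;=\; \sum_{\ell=0}^d c_{\ell j} E_i v_\ell \;=\; c_{ij} v_i.
\]
Therefore $E_i A^* E_j$ acts as $v_j \mapsto c_{ij} v_i$ and kills the remaining basis vectors. Since $\{v_i\}_{i=0}^d$ is a basis of $V$ and the $v_i$ are nonzero, $E_i A^* E_j = 0$ precisely when $c_{ij}=0$ and $E_i A^* E_j \neq 0$ precisely when $c_{ij} \neq 0$. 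Combining this with the above description of the $c_{ij}$ yields the first assertion.

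The second assertion is proved in the same way: the standard ordering $\{E^*_i\}_{i=0}^d$ produces a standard ordering $\{\th^*_i\}_{i=0}^d$ of the eigenvalues of $A^*$, and choosing eigenvectors $v^*_i$ for $A^*$ with eigenvalue $\th^*_i$ that witness Definition \ref{def:LP}(i) yields an irreducible tridiagonal matrix representing $A$ on the basis $\{v^*_i\}_{i=0}^d$. The identical argument with the roles of $A$ and $A^*$ interchanged then gives the stated pattern for $E^*_i A E^*_j$. There is no real obstacle here; the only point requiring care is making sure the chosen eigenbases are exactly those prescribed by the standard orderings, so that the irreducible tridiagonal shape furnished by Definition \ref{def:LP} applies to the matrices read off the primitive idempotents of the Leonard system.
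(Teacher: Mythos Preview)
Your argument is correct: picking an eigenbasis $\{v_i\}$ of $A$ witnessing the irreducible tridiagonal shape of $A^*$ and then reading off $E_i A^* E_j v_k = \delta_{jk}\, c_{ij} v_i$ is exactly the natural proof, and the symmetric argument handles the second display. The paper does not supply its own proof of this lemma --- it simply cites \cite[Lemma~3.3]{TV} --- so there is nothing to compare; your write-up is the standard direct verification that reference would contain.
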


As we will see, our main results are meaningful only for $d \geq 3$.
For the rest of this paper we assume that
$d \geq 3$.

Next we recall the notion of isomorphism for Leonard systems.
Let $V'$ denote a vector space over $\F$ with  dimension $d+1$.
For  an algebra isomorphism  $\sigma : \text{\rm End}(V) \to \text{\rm End}(V')$,
define
\[
\Phi^\sigma = (A^\sigma; \{E^\sigma_i\}_{i=0}^d; A^{*\sigma};
                          \{E^{*\sigma}_i\}_{i=0}^d).
\]
Then $\Phi^\sigma$ is a Leonard system on $V'$.
For a Leonard system  $\Phi'$ on $V'$,
by an {\em isomorphism of Leonard systems from $\Phi$ to $\Phi'$}
we mean an algebra isomorphism 
 $\sigma : \text{\rm End}(V) \to \text{\rm End}(V')$
such that $\Phi^\sigma = \Phi'$.
The Leonard systems $\Phi$ and $\Phi'$ are said to be {\em isomorphic}
whenever there exists an isomorphism of Leonard systems from $\Phi$ to $\Phi'$.

For scalars $\xi, \zeta, \xi^*, \zeta^* \in \F$ with $\xi \xi^* \neq 0$,
the sequence
\begin{equation}
    ( \xi A + \zeta I;  \{E_i\}_{i=0}^d; \xi^*  A^* + \zeta^* I; \{E^*_i\}_{i=0}^d)     \label{eq:affine}
\end{equation}
is a Leonard system on $V$, called an {\em affine transformation of $\Phi$}.
Also,
\begin{align*}
\Phi^* = ( A^*; \{E^*_i\}_{i=0}^d; A; \{E_i\}_{i=0}^d),
\\
\Phi^\downarrow = ( A; \{E_i\}_{i=0}^d; A^*; \{E^*_{d-i}\}_{i=0}^d),
\\
\Phi^\Downarrow = ( A; \{E_{d-i}\}_{i=0}^d; A^*; \{E^*_i\}_{i=0}^d)
\end{align*}
are Leonard systems on $V$.
By construction, the Leonard systems associated with $A,A^*$ are $\Phi$, $\Phi^\downarrow$,
$\Phi^\Downarrow$, $\Phi^{\downarrow \Downarrow}$.

Next we recall the parameter array of a Leonard system.

\begin{defi}     {\rm (See \cite[Definition 1.8]{T:Leonard}.) }
\label{def:eigenseq}    \samepage
\ifDRAFT {\rm def:eigenseq}. \fi
For $0 \leq i \leq d$ let $\th_i$ (resp.\ $\th^*_i$)
denote the eigenvalue of $A$ (resp.\ $A^*$)
associated with $E_i$ (resp.\ $E^*_i$).
We call $\{\th_i\}_{i=0}^d$ (resp.\ $\{\th^*_i\}_{i=0}^d$)
the {\em eigenvalue sequence} (resp.\ {\em dual eigenvalue sequence})
of $\Phi$.
\end{defi}

We have some comments about the eigenvalues and dual eigenvalues.
We have
\[
A E_i = \th_i E_i = E_i A, 
\qquad\qquad
A^* E^*_i =  \th^*_i E^*_i= E^*_i A^*
\qquad\qquad
(0 \leq i \leq d).
\]
Let $x$ denote an indeterminate.
For $0 \leq i \leq d$ define a polynomial
\[
\tau_i (x) = (x-\th_0) (x-\th_1) \cdots (x-\th_{i-1}).
\]
Note that $\tau_i(x)$ is monic with degree $i$.

\begin{defi}   {\rm (See \cite[Section 21]{T:survey}.) }
\label{def:splitbasis}   \samepage
\ifDRAFT {\rm def:splitbasis}. \fi
Pick $0 \neq u \in E^*_0 V$.
For $0 \leq i \leq d$ define the vector
\[
  u_i = \tau_i(A)  u.
\]
The  vectors $\{u_i\}_{i=0}^d$ form a basis for $V$.
This basis is said to be {\em $\Phi$-split}.
\end{defi}

\begin{lemma}    {\rm (See \cite[Section 21]{T:survey}.) }
\label{lem:defvphi}    \samepage
\ifDRAFT {\rm lem:defvphi}. \fi
With respect to a $\Phi$-split basis of $V$,
the matrices representing $A$ and $A^*$ have the form
\[
A :
\begin{pmatrix}
\th_0  & & & & & {\bf 0} \\
 1 & \th_1 \\
    & 1 & \th_2  \\
  &   &  \cdot & \cdot \\
  &   &   &  \cdot & \cdot  \\
{\bf 0} & & & &  1 & \th_d
\end{pmatrix},
\qquad\qquad
A ^* :
\begin{pmatrix}
\th^*_0  & \vphi_1 & & & & {\bf 0} \\
 & \th^*_1  & \vphi_2 \\
    &  & \th^*_2 & \cdot \\
  &   & &  \cdot & \cdot \\
  &   &   & &  \cdot &  \vphi_d  \\
{\bf 0} & & & &   & \th^*_d
\end{pmatrix},
\]
where $0 \neq \vphi_i \in \F$ for $1 \leq i \leq d$.
\end{lemma}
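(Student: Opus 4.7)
The plan is to directly compute the action of $A$ and $A^*$ on the basis $\{u_i\}_{i=0}^d$. For $A$, the recursion $\tau_{i+1}(x) = (x - \theta_i)\tau_i(x)$ immediately gives $A u_i = u_{i+1} + \theta_i u_i$ for $0 \leq i \leq d-1$, while $\tau_{d+1}(A) = 0$ by Cayley--Hamilton (since $A$ is multiplicity-free with eigenvalues $\theta_0,\dots,\theta_d$) yields $A u_d = \theta_d u_d$. This produces the displayed lower bidiagonal matrix for $A$ with subdiagonal $1$'s.

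For $A^*$, the idea is to compare two flags. Set $V_i = \text{Span}\{u_0,\dots,u_i\}$ and $W_i = \sum_{j=0}^{i} E^*_j V$. Using $u_i = (A - \theta_{i-1}I)u_{i-1}$ together with Lemma \ref{lem:trid0}, which forces $A W_{i-1} \subseteq W_i$, an easy induction gives $u_i \in W_i$; matching dimensions then yields $V_i = W_i$. Since $W_i$ is $A^*$-invariant, the matrix of $A^*$ in the basis $\{u_i\}$ is upper triangular, and comparing the eigenvalues of $A^*$ on the successive quotients $V_i/V_{i-1} = W_i/W_{i-1}$ pins down the diagonal entries to be $\theta^*_0, \theta^*_1, \dots, \theta^*_d$ in that order.

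To upgrade ``upper triangular'' to ``upper bidiagonal'', I would introduce the dual flag $V'_i = \text{Span}\{u_i, u_{i+1}, \dots, u_d\}$. Expanding $u_i = \tau_i(A) u = \sum_{j \geq i} \tau_i(\theta_j) E_j u$ (using $\tau_i(\theta_j)=0$ for $j<i$) and comparing dimensions gives $V'_i = \sum_{j \geq i} E_j V$. The dual statement in Lemma \ref{lem:trid0} then implies $A^*$ sends $V'_i$ into $V'_{i-1}$. Combining, $A^* u_i \in V_i \cap V'_{i-1}$, and a basis comparison shows this intersection equals $\text{Span}\{u_{i-1}, u_i\}$. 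Thus $A^* u_i = \varphi_i u_{i-1} + \theta^*_i u_i$ for some $\varphi_i \in \F$ (with the convention $u_{-1}=0$), which is the desired matrix shape.

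Finally I would verify $\varphi_i \neq 0$ for $1 \leq i \leq d$ by contradiction: if $\varphi_i = 0$, then the computed actions of $A$ and $A^*$ both preserve the proper nonzero subspace $V'_i$, contradicting the fact that $A, A^*$ together generate $\text{End}(V)$ (Lemma \ref{lem:generate}), which forces $V$ to be irreducible under their action. The main obstacle is organizing the two-flag argument and confirming that the identifications $V_i = W_i$ and $V'_i = \sum_{j \geq i} E_j V$ are correct; once these are in hand the matrix shapes and the nonvanishing of $\varphi_i$ follow cleanly.
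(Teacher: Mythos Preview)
The paper does not actually prove this lemma; it is stated with a citation to \cite[Section 21]{T:survey} and no argument is given. Your self-contained proof is correct: the lower-bidiagonal shape of $A$ follows immediately from the recursion for $\tau_i$ and Cayley--Hamilton, the two-flag identifications $V_i = \sum_{j\le i} E^*_j V$ and $V'_i = \sum_{j\ge i} E_j V$ are established exactly as you describe (induction via Lemma~\ref{lem:trid0} for the first, the vanishing $\tau_i(\theta_j)=0$ for $j<i$ plus a dimension count for the second), and together they pin $A^*u_i$ into $V_i\cap V'_{i-1}=\text{Span}\{u_{i-1},u_i\}$. The nonvanishing argument for $\varphi_i$ via the common invariant subspace $V'_i$ and Lemma~\ref{lem:generate} is also sound. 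This is in fact the standard route taken in the cited reference, so your approach agrees with what the paper is implicitly invoking.
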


\begin{defi}    {\rm (See \cite[Section 8]{T:TDD}.)}
\label{def:splitseq}  \samepage
\ifDRAFT {\rm def:splitseq}. \fi
Referring to Lemma \ref{lem:defvphi}, 
the sequence $\{\vphi_i\}_{i=1}^d$ is called the {\em first split sequence of $\Phi$}.
By the {\em second split sequence of $\Phi$} we mean the
first split sequence of $\Phi^\Downarrow$.
\end{defi}

\begin{defi}   {\rm (See \cite[Definition 11.1]{T:TDD}.) }
\label{def:parray}   \samepage
\ifDRAFT {\rm def:parray}. \fi
By the {\em parameter array of $\Phi$} we mean
the sequence
\[
  (\{\th_i\}_{i=0}^d;
   \{\th^*_i\}_{i=0}^d;
   \{\vphi_i\}_{i=1}^d;
  \{\phi_i\}_{i=1}^d),
\]
where
$\{\vphi_i\}_{i=1}^d$  (resp.\  $\{\phi_i\}_{i=1}^d$)  is the first split sequence 
(resp. second split sequence) of $\Phi$.
\end{defi}

\begin{lemma}   {\rm (See \cite[Theorem 22.2]{T:survey}.) }
\label{lam:LPclassify}    \samepage
\ifDRAFT {\rm lem:LPclassify}. \fi
The  Leonard system $\Phi$ is uniquely determined up to isomorphism 
by its parameter array.
\end{lemma}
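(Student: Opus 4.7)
The plan is to show that two Leonard systems $\Phi$ on $V$ and $\Phi'$ on $V'$ with the same parameter array $(\{\th_i\}_{i=0}^d;\{\th^*_i\}_{i=0}^d;\{\vphi_i\}_{i=1}^d;\{\phi_i\}_{i=1}^d)$ admit an algebra isomorphism $\sigma:\text{\rm End}(V) \to \text{\rm End}(V')$ with $\Phi^\sigma = \Phi'$. The strategy is to transport the matrix representation from a split basis of $\Phi$ to one of $\Phi'$.

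First, I would fix a $\Phi$-split basis $\{u_i\}_{i=0}^d$ of $V$ via Definition \ref{def:splitbasis}, and similarly a $\Phi'$-split basis $\{u'_i\}_{i=0}^d$ of $V'$. By Lemma \ref{lem:defvphi}, the matrices representing $A$ and $A^*$ with respect to $\{u_i\}$ have the explicit form displayed there, with entries built entirely from the eigenvalue sequence, the dual eigenvalue sequence, and the first split sequence of $\Phi$. By Definitions \ref{def:eigenseq}, \ref{def:splitseq} these sequences are precisely $\{\th_i\}$, $\{\th^*_i\}$, $\{\vphi_i\}$. The same reasoning applied to $\Phi'$ yields identical matrices for $A'$ and $A^{*\prime}$ with respect to $\{u'_i\}$. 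Let $T:V \to V'$ be the linear bijection sending $u_i \mapsto u'_i$, and set $\sigma(X) = T X T^{-1}$; equality of the matrix representations in the two bases forces $\sigma(A) = A'$ and $\sigma(A^*) = A^{*\prime}$.

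Finally, I would match up the primitive idempotents using the fact that they are polynomials in $A$ and $A^*$. Since $\sigma$ is an algebra homomorphism and formula (\ref{eq:Ei}) expresses $E_i$ as a polynomial in $A$ whose coefficients depend only on $\{\th_j\}_{j=0}^d$, applying $\sigma$ and using that $\Phi'$ has the same eigenvalue sequence yields $\sigma(E_i) = E'_i$ for $0 \le i \le d$. The identical argument with $A^*$ in place of $A$ gives $\sigma(E^*_i) = E^{*\prime}_i$. Hence $\Phi^\sigma = \Phi'$, as required.

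The main point requiring care is that the matrix displayed in Lemma \ref{lem:defvphi} is genuinely an invariant of $\Phi$, i.e.\ independent of the choice of $0 \neq u \in E^*_0 V$ used to build the split basis. This is so because replacing $u$ by $cu$ scales every $u_i$ by the same factor $c$, leaving both the subdiagonal entries of $A$ and the superdiagonal entries $\vphi_i$ of $A^*$ unchanged. Once this is verified, the second split sequence $\{\phi_i\}$ plays no separate role in the uniqueness argument; it appears in the parameter array only as a convenient derived invariant.
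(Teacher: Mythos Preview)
Your argument is correct. The paper itself does not supply a proof of this lemma; it simply cites \cite[Theorem 22.2]{T:survey} as the source. Your approach via the split basis is the natural one and is essentially the standard argument: since the matrices of $A$ and $A^*$ in a $\Phi$-split basis (Lemma \ref{lem:defvphi}) are determined by $\{\th_i\}$, $\{\th^*_i\}$, $\{\vphi_i\}$ alone, the change-of-basis bijection $T$ conjugates $A$ to $A'$ and $A^*$ to $A^{*\prime}$, and the primitive idempotents are carried along automatically via \eqref{eq:Ei}. Your remark that the second split sequence $\{\phi_i\}$ is redundant for the uniqueness direction is also correct.
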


\begin{note}   \samepage
For a detailed description of the parameter arrays,
see \cite[Theorem 10.1]{T:notes} and \cite[Appendix]{T:notes}.
\end{note}

\begin{lemma}   {\rm (See \cite[Lemma 5.1]{NT:affine}.) }
\label{lem:affine}  \samepage
\ifDRAFT {\rm lem:affine}. \fi
Consider the affine transformation of $\Phi$ shown in \eqref{eq:affine}.
This affine transformation  has parameter array
\[
 (\{ \xi \th_i + \zeta\}_{i=0}^d;  \{\xi^* \th^*_i + \zeta^*\}_{i=0}^d;
   \{\xi \xi^* \vphi_i\}_{i=1}^d;
   \{\xi \xi^* \phi_i\}_{i=1}^d).
\]
\end{lemma}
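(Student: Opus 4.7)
The plan is to verify each component of the parameter array of the affine transformation in turn. Denote the affine transformation by $\Phi' = (A'; \{E_i\}_{i=0}^d; A^{*\prime}; \{E^*_i\}_{i=0}^d)$ with $A' = \xi A + \zeta I$ and $A^{*\prime} = \xi^* A^* + \zeta^* I$. First I would handle the eigenvalue and dual eigenvalue sequences: since $\Phi'$ has the same primitive idempotents $E_i, E^*_i$ as $\Phi$, the identity $A E_i = \th_i E_i$ gives $A' E_i = (\xi \th_i + \zeta) E_i$, and similarly for $A^{*\prime}$. So the eigenvalue sequence of $\Phi'$ is $\{\xi \th_i + \zeta\}_{i=0}^d$ and its dual eigenvalue sequence is $\{\xi^* \th^*_i + \zeta^*\}_{i=0}^d$.

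Next I would handle the first split sequence. Fix $0 \neq u \in E^*_0 V$ and let $\{u_i\}_{i=0}^d$ with $u_i = \tau_i(A) u$ be the $\Phi$-split basis of Definition \ref{def:splitbasis}. The key computational observation is that if $\tau'_i(x) = \prod_{j=0}^{i-1}(x - \xi \th_j - \zeta)$ is the analogous polynomial for $\Phi'$, then
\[
  \tau'_i(A') = \prod_{j=0}^{i-1} \bigl( \xi A + \zeta I - (\xi \th_j + \zeta) I \bigr)
             = \xi^i \prod_{j=0}^{i-1}(A - \th_j I) = \xi^i \tau_i(A).
\]
Hence $u'_i := \tau'_i(A') u = \xi^i u_i$ gives a $\Phi'$-split basis (note $E^*_0$ is unchanged, so the same vector $u$ serves). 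Using the form of $A$ and $A^*$ from Lemma \ref{lem:defvphi} in the basis $\{u_i\}$, a direct calculation yields
\[
A' u'_i = (\xi \th_i + \zeta) u'_i + u'_{i+1}, \qquad
A^{*\prime} u'_i = (\xi^* \th^*_i + \zeta^*) u'_i + \xi \xi^* \vphi_i \, u'_{i-1},
\]
where the factor $\xi \xi^*$ in the last term comes from combining the scalar $\xi^*$ in front of $A^*$ with the rescaling $\xi^i u_{i-1} = \xi \cdot u'_{i-1}$. Comparing with Lemma \ref{lem:defvphi} applied to $\Phi'$ shows that the first split sequence of $\Phi'$ is $\{\xi \xi^* \vphi_i\}_{i=1}^d$.

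Finally, for the second split sequence I would apply the first-split-sequence result to $\Phi^\Downarrow$ in place of $\Phi$. The affine transformation of $\Phi^\Downarrow$ as in \eqref{eq:affine} coincides, term by term, with $(\Phi')^\Downarrow$, since $\Downarrow$ only reorders $\{E_i\}$ and leaves $A,A^*$ (and hence their affine transforms) untouched. By Definition \ref{def:splitseq} the second split sequence of $\Phi'$ equals the first split sequence of $(\Phi')^\Downarrow$, which by what we just proved equals $\{\xi \xi^* \phi_i\}_{i=1}^d$. Combining these three computations yields the claimed parameter array. The arguments are essentially bookkeeping; the only nontrivial step, and the main potential pitfall, is choosing the correct rescaling $u'_i = \xi^i u_i$ of the split basis so that the lower subdiagonal of $A'$ retains the value $1$ while the superdiagonal entries of $A^{*\prime}$ pick up exactly the factor $\xi \xi^*$.
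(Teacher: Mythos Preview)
Your proof is correct. The paper itself does not prove this lemma; it simply cites \cite[Lemma 5.1]{NT:affine}. Your direct verification via the rescaled split basis $u'_i = \xi^i u_i$ is exactly the natural argument, and your handling of the second split sequence by passing to $\Phi^\Downarrow$ and noting that $(\Phi')^\Downarrow$ is the affine transformation of $\Phi^\Downarrow$ is clean and correct.
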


\begin{lemma}     {\rm (See \cite[Theorem 1.11]{T:Leonard}.) }
\label{lem:parray}   \samepage
\ifDRAFT {\rm lem:parray}. \fi
The following hold:
\begin{itemize}
\item[\rm (i)]
the parameter array of $\Phi^\downarrow$ is
\[
(\{\th_i\}_{i=0}^d;  \{\th^*_{d-i}\}_{i=0}^d;
 \{\phi_{d-i+1}\}_{i=1}^d;  \{\vphi_{d-i+1}\}_{i=1}^d);
\]
\item[\rm (ii)]
the parameter array of $\Phi^\Downarrow$ is
\[
(\{\th_{d-i}\}_{i=0}^d;  \{\th^*_{i}\}_{i=0}^d;
 \{\phi_i\}_{i=1}^d;  \{\vphi_i\}_{i=1}^d);
\]
\item[\rm (iii)]
the parameter array of $\Phi^*$ is
\[
(\{\th^*_i\}_{i=0}^d;  \{\th_i\}_{i=0}^d;
 \{\vphi_i\}_{i=1}^d;  \{\phi_{d-i+1}\}_{i=1}^d).
\]
\end{itemize}
\end{lemma}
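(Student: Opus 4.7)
The plan is to verify each of (i)--(iii) by constructing an explicit split basis for the respective transformed Leonard system and reading off the parameter array from the matrix representations of $A$ and $A^*$, with Lemma \ref{lam:LPclassify} providing uniqueness. I would dispose of part (ii) first, since it is essentially tautological. The reordering of primitive idempotents gives the eigenvalue sequence $\{\th_{d-i}\}_{i=0}^d$ and dual eigenvalue sequence $\{\th^*_i\}_{i=0}^d$ of $\Phi^\Downarrow$ immediately. By Definition \ref{def:splitseq}, the first split sequence of $\Phi^\Downarrow$ is by definition the second split sequence of $\Phi$, namely $\{\phi_i\}_{i=1}^d$. Since $\Downarrow$ is an involution on Leonard systems, $(\Phi^\Downarrow)^\Downarrow = \Phi$, so the second split sequence of $\Phi^\Downarrow$ (i.e., the first split sequence of $(\Phi^\Downarrow)^\Downarrow$) is $\{\vphi_i\}_{i=1}^d$.

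For part (i), I would pick $0 \neq u \in E^*_d V$, which is the zeroth dual idempotent space for $\Phi^\downarrow$, and form $u_i = \tau_i(A) u$; the polynomials $\tau_i$ are the same as those for $\Phi$ since the eigenvalue ordering of $A$ is unchanged. This is a $\Phi^\downarrow$-split basis. Lemma \ref{lem:defvphi} applied to $\Phi^\downarrow$ then gives $A$ as lower bidiagonal with diagonal $\{\th_i\}_{i=0}^d$ and $A^*$ as upper bidiagonal with diagonal $\{\th^*_{d-i}\}_{i=0}^d$ and superdiagonal $\{\vphi'_i\}_{i=1}^d$; this recovers the (dual) eigenvalue sequences of $\Phi^\downarrow$ and identifies $\{\vphi'_i\}_{i=1}^d$ as its first split sequence. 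The main obstacle is to verify $\vphi'_i = \phi_{d-i+1}$. I expect the cleanest route is to use the identity $A^* u_j = [A^*, \tau_j(A)] u + \th^*_d u_j$ (since $A^* u = \th^*_d u$), expand the commutator using the tridiagonal structure in Lemma \ref{lem:trid0}, and match coefficients against the recursion defining the second split sequence of $\Phi$ via Definition \ref{def:splitseq} applied to $\Phi^\Downarrow$. For the second split sequence of $\Phi^\downarrow$, an analogous direct computation, or alternatively combining Definition \ref{def:splitseq} with (ii) applied to $(\Phi^\downarrow)^\Downarrow = (\Phi^\Downarrow)^\downarrow$, yields $\{\vphi_{d-i+1}\}_{i=1}^d$.

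Part (iii) is handled by the mirror-image construction, exchanging the roles of $A$ and $A^*$ throughout: pick $0 \neq u \in E_0 V$ and form $u_i = \tau^*_i(A^*) u$ with $\tau^*_i(x) = \prod_{j=0}^{i-1}(x - \th^*_j)$; this is a $\Phi^*$-split basis, in which the swap of the eigenvalue and dual eigenvalue sequences is immediate. The crux is to show that the first split sequence of $\Phi^*$ equals $\{\vphi_i\}_{i=1}^d$ and the second equals $\{\phi_{d-i+1}\}_{i=1}^d$. The first of these identities reflects the symmetry of Definition \ref{def:LP} in $A$ and $A^*$, and is obtained by the calculation mirroring the one in (i) above; the second then follows by applying (ii) to $\Phi^*$ and using the first identity together with (i).
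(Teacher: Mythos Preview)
The paper does not actually prove this lemma; it simply cites \cite[Theorem 1.11]{T:Leonard} and moves on. So there is no in-paper argument to compare your proposal against.

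Your outline is a legitimate self-contained route. Part (ii) is, as you say, immediate from Definition \ref{def:splitseq} and the involutivity of $\Downarrow$. For (i) and (iii) your plan---build the split basis for the transformed system directly and read off the superdiagonal of $A^*$ (resp.\ $A$)---is the standard one, and your bookkeeping with the relation $(\Phi^*)^{\Downarrow}=(\Phi^{\downarrow})^{*}$ to recover the second split sequence in (iii) from (i) is correct. The one place where real work hides is the commutator step in (i): identifying the superdiagonal entries $\vphi'_i$ of $A^*$ in the $\Phi^{\downarrow}$-split basis with $\phi_{d-i+1}$ amounts to relating two different split bases (one built from $E^*_dV$ with the $\th$-ordering, the other from $E^*_0V$ with the reversed $\th$-ordering). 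Expanding $[A^*,\tau_j(A)]$ alone will give you the \emph{shape} (upper bidiagonal) but not the identification with $\phi_{d-i+1}$ without an additional ingredient---typically either the explicit three-term recurrence formulas for $\vphi_i,\phi_i$ in terms of $\th_i,\th^*_i$ from \cite{T:Leonard}, or the antiautomorphism $\dagger$ of $\text{End}(V)$ that fixes $A,A^*$ and interchanges the two split flags. Either of these closes the gap cleanly; just be aware that the commutator computation by itself does not.
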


\begin{defi}   {\rm (See \cite[Definition 7.1]{T:survey}.)}
 \label{def:ai}   \samepage
\ifDRAFT {\rm def:ai}. \fi
Define
\[
a_i = \text{tr} (E^*_i A)
 \qquad\qquad ( 0 \leq i \leq d),
\]
where tr means trace.
\end{defi}

\begin{lemma}   {\rm (See \cite[Lemma 7.5]{T:survey}.) }
\label{lem:ai}   \samepage
\ifDRAFT {\rm lem:ai}. \fi
We have
\[
E^*_i A E^*_i = a_i E^*_i
\qquad\qquad (0 \leq i \leq d).
\]
\end{lemma}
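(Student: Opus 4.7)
The plan is to exploit the fact that each eigenspace of $A^*$ is one-dimensional, so that $E^*_i$ is a rank-one idempotent. The argument is short and essentially a trace computation.

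First I would invoke Lemma \ref{lem:mfree}, which asserts that $A^*$ is multiplicity-free. Therefore each eigenspace $E^*_i V$ has dimension one, and $E^*_i$ is a rank-one projection; in particular $\text{tr}(E^*_i) = 1$.

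Next I would examine the operator $E^*_i A E^*_i$. Its image lies in $E^*_i V$, which is one-dimensional, and it annihilates $\ker E^*_i = \sum_{j \neq i} E^*_j V$. These two facts force
\[
 E^*_i A E^*_i = c\, E^*_i
\]
for some scalar $c \in \F$, since up to scalar there is only one linear map with the prescribed image and kernel, namely $E^*_i$ itself.

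Finally, I would identify $c$ by taking traces. On one hand $\text{tr}(c E^*_i) = c$ by the previous step. On the other hand, using $(E^*_i)^2 = E^*_i$ and the cyclic property of the trace,
\[
 \text{tr}(E^*_i A E^*_i) = \text{tr}(A E^*_i E^*_i) = \text{tr}(A E^*_i) = \text{tr}(E^*_i A) = a_i,
\]
where the last equality is Definition \ref{def:ai}. Thus $c = a_i$, as required. There is no real obstacle here; the only point to keep in mind is the multiplicity-free hypothesis supplied by Lemma \ref{lem:mfree}, which is what makes $E^*_i A E^*_i$ a scalar multiple of $E^*_i$ in the first place.
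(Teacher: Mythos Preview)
Your argument is correct. The paper itself does not prove this lemma; it simply cites \cite[Lemma 7.5]{T:survey}. Your proof supplies a clean self-contained argument using only Lemma~\ref{lem:mfree} (multiplicity-freeness of $A^*$) and Definition~\ref{def:ai}, so in that sense you have done more than the paper does here. One minor phrasing point: when you say ``up to scalar there is only one linear map with the prescribed image and kernel,'' this is not quite accurate in the degenerate case $c=0$ (then the image is $0$, not $E^*_iV$); it would be cleaner to say that $E^*_iAE^*_i$ has image contained in the one-dimensional space $E^*_iV$ and vanishes on $\ker E^*_i$, hence is a scalar multiple of $E^*_i$. The trace computation that pins down the scalar is fine as written.
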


\begin{lemma}   {\rm (See \cite[Lemma 23.6]{T:survey}.) }
\label{lem:aiformula}   \samepage
\ifDRAFT {\rm lem:aiformula}. \fi
We have
\begin{align*}
a_0 &= \th_0 + \frac{\vphi_1}{\th^*_0 - \th^*_1},
\\
a_i &= \th_i + \frac{\vphi_i} { \th^*_i - \th^*_{i-1}} + \frac{\vphi_{i+1} } { \th^*_i - \th^*_{i+1} }
        \qquad \qquad (1 \leq i \leq d-1),
\\
a_d &= \th_d + \frac{\vphi_d} { \th^*_d - \th^*_{d-1}}.
\end{align*}
\end{lemma}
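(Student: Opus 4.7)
The plan is to work in a $\Phi$-split basis of $V$, in which by Lemma \ref{lem:defvphi} the maps $A$ and $A^*$ are represented by the explicit bidiagonal matrices $L$ (lower, with diagonal $\th_i$ and subdiagonal of $1$'s) and $U$ (upper, with diagonal $\th^*_i$ and superdiagonal $\vphi_i$). Since $U$ is upper triangular with distinct diagonal entries, its eigenvector $v_i$ for $\th^*_i$ is easy to describe: the relation $(U - \th^*_i I)v_i = 0$ forces $v_i$ to be supported on coordinates $0,\ldots,i$, and, normalizing by $v_i^{(i)}=1$, the recursion $(\th^*_j - \th^*_i)v_i^{(j)} + \vphi_{j+1}v_i^{(j+1)} = 0$ gives in particular $v_i^{(i-1)} = \vphi_i/(\th^*_i - \th^*_{i-1})$.

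Let $P$ be the $(d+1)\times(d+1)$ matrix with columns $v_0,\ldots,v_d$. Then $P$ is upper triangular with $1$'s on the diagonal, and $P^{-1} U P$ is the diagonal matrix $\mathrm{diag}(\th^*_0,\ldots,\th^*_d)$. Thus $P$ is the change-of-basis matrix from an $A^*$-eigenbasis to the $\Phi$-split basis, and $P^{-1}LP$ represents $A$ in an $A^*$-eigenbasis. In this latter basis each $E^*_i$ is the diagonal matrix unit supported at $(i,i)$, so by Lemma \ref{lem:ai} the diagonal entries of $P^{-1}LP$ are precisely the intersection numbers $a_i$.

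It remains to extract these diagonal entries. Since $L$ is lower bidiagonal and $P$, $P^{-1}$ are unit upper triangular, in
\[
(P^{-1}LP)_{i,i} \;=\; \sum_k (P^{-1})_{i,k}\bigl(\th_k P_{k,i} + P_{k-1,i}\bigr)
\]
only $k=i$ (contributing $\th_i + P_{i-1,i}$) and $k=i+1$ (contributing $(P^{-1})_{i,i+1}$) can be nonzero. Using the identity $(P^{-1})_{i,i+1} = -P_{i,i+1}$ valid on the immediate superdiagonal of any unit upper triangular matrix, together with the explicit values $P_{i-1,i} = \vphi_i/(\th^*_i-\th^*_{i-1})$ and $P_{i,i+1} = \vphi_{i+1}/(\th^*_{i+1}-\th^*_i)$ from the previous paragraph, one obtains
\[
a_i \;=\; \th_i + \frac{\vphi_i}{\th^*_i - \th^*_{i-1}} + \frac{\vphi_{i+1}}{\th^*_i - \th^*_{i+1}}
\qquad (1 \leq i \leq d-1).
\]
The boundary cases $i=0$ and $i=d$ are identical except that the missing term ($P_{-1,0}$ or $(P^{-1})_{d,d+1}$) is $0$, yielding the stated formulas for $a_0$ and $a_d$.

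There is no real obstacle here, since the argument is a direct matrix computation; the mild subtlety is recognizing that, thanks to the triangular structure of $P$ and the bidiagonal structure of $L$, the diagonal entries of $P^{-1}LP$ depend only on the main diagonal and immediate superdiagonal of $P$ and $P^{-1}$, so one never needs to compute the full inverse $P^{-1}$ or the off-diagonal entries of the conjugated matrix.
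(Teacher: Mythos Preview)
Your proof is correct. The paper itself does not prove this lemma; it simply cites \cite[Lemma~23.6]{T:survey}. Your argument---diagonalizing the upper bidiagonal matrix $U$ representing $A^*$ in the $\Phi$-split basis via an explicit unit upper triangular $P$, and then reading off the diagonal of $P^{-1}LP$---is the natural approach and is essentially the computation carried out in the cited reference, which likewise passes through the split basis to obtain these formulas. The observation that only the immediate superdiagonals of $P$ and $P^{-1}$ matter, together with $(P^{-1})_{i,i+1}=-P_{i,i+1}$, is exactly the right shortcut.
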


\begin{defi}   {\rm (See \cite[Definition 10.3]{T:survey}.) }
\label{def:Phistandard}    \samepage
\ifDRAFT {\rm def:Phistandard}. \fi
Pick $0 \neq u \in E_0V$.
For $0 \leq i \leq d$ define the vector
\[
  v^*_i  = E^*_i u.
\]
The vectors $\{v^*_i\}_{i=0}^d$ form a basis for $V$.
This basis is said to be {\em $\Phi$-standard}.
\end{defi}

\begin{lemma}   {\rm (See \cite[Definition 11.1]{T:survey}.) }
 \label{lem:defbici}   \samepage
\ifDRAFT {\rm lem:defbici}. \fi
With respect to a $\Phi$-standard basis for $V$,
the matrices representing $A$ and  $A^*$ have the form
\[
A :
\begin{pmatrix}
a_0 & b_0  & & & & {\bf 0}  \\
c_1 & a_1 & b_1 \\
 & c_2 & \cdot & \cdot \\
 &  & \cdot & \cdot & \cdot \\
  & & & \cdot & \cdot & b_{d-1} \\
{\bf 0} & & & & c_d & a_d
\end{pmatrix},
\qquad\qquad
A^* :
\begin{pmatrix}
\th^*_0 &   & & & & {\bf 0}  \\
 & \th^*_1  \\
 &  & \th^*_2  \\
 &  &  & \cdot   \\
  & & &  & \cdot \\
{\bf 0} & & & &  & \th^*_d
\end{pmatrix},
\]
where $\{a_i\}_{i=0}^d$ are from Definition \ref{def:ai} 
and the scalars $\{c_i\}_{i=1}^d$,
$\{b_i\}_{i=0}^{d-1}$ are nonzero.
\end{lemma}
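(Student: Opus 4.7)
The plan is to compute, column by column, the matrix entries of $A$ and $A^*$ in the basis $\{v^*_i\}_{i=0}^d$, using the identification $v^*_i = E^*_i u$ to convert matrix entries into applications of the projections $E^*_j$.

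I would first dispose of $A^*$ immediately: since $v^*_i = E^*_i u$ lies in the $\theta^*_i$-eigenspace of $A^*$, we have $A^* v^*_i = \theta^*_i v^*_i$, which is the claimed diagonal form. The multiplicity-freeness of $A^*$ from Lemma \ref{lem:mfree}, together with the basis property asserted in Definition \ref{def:Phistandard}, also yields the useful identity $E^*_j v^*_k = \delta_{jk} v^*_k$, since $E^*_k V$ is one-dimensional and spanned by the nonzero vector $v^*_k$. This identity will be used repeatedly.

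For $A$, I would write $A v^*_i = \sum_{j=0}^d M_{ji} v^*_j$ and apply $E^*_j$ to both sides. The identity above gives
\[
 M_{ji} v^*_j \;=\; E^*_j A v^*_i \;=\; E^*_j A E^*_i u.
\]
The three cases for $(i,j)$ then follow from the earlier tools. When $|i-j|>1$, Lemma \ref{lem:trid0} gives $E^*_j A E^*_i = 0$, producing the tridiagonal shape. When $j=i$, Lemma \ref{lem:ai} gives $E^*_i A E^*_i = a_i E^*_i$, so $M_{ii} v^*_i = a_i E^*_i u = a_i v^*_i$ and hence $M_{ii}=a_i$. When $|i-j|=1$, Lemma \ref{lem:trid0} gives $E^*_j A E^*_i \neq 0$, and I would label the resulting entries $M_{i+1,i}=c_{i+1}$ and $M_{i-1,i}=b_{i-1}$.

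The one step requiring a moment of care is the passage from ``$E^*_j A E^*_i \neq 0$ as an operator'' to ``$M_{ji}\neq 0$ as a scalar'' when $|i-j|=1$. The bridge is that $E^*_i V$ is one-dimensional (by Lemma \ref{lem:mfree}) and spanned by the nonzero vector $v^*_i$, so the restriction of $E^*_j A E^*_i$ to $E^*_i V$ is either identically zero or nonzero on every nonzero vector; the nonvanishing of the operator then forces $E^*_j A E^*_i u \neq 0$ and therefore $M_{ji} \neq 0$. I do not anticipate any further obstacle: the proof is essentially an unpacking of the definitions through Lemmas \ref{lem:mfree}, \ref{lem:trid0}, and \ref{lem:ai}.
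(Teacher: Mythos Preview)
Your argument is correct and is the natural direct verification; the paper itself gives no proof of this lemma, merely citing \cite[Definition 11.1]{T:survey}. The only point worth a remark is your final step: the passage from $E^*_j A E^*_i \neq 0$ to $M_{ji}\neq 0$ is most cleanly phrased by noting that $E^*_j A E^*_i$ factors through the one-dimensional space $E^*_iV$, so it is nonzero on $V$ if and only if it is nonzero on the spanning vector $v^*_i = E^*_i u$ of $E^*_iV$; you have this, though your phrasing ``restriction of $E^*_j A E^*_i$ to $E^*_iV$'' slightly obscures that the operator is being applied to $u$, not to an element of $E^*_iV$ (of course $E^*_i u = E^*_i(E^*_i u)$, so it makes no difference).
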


The scalars
$\{c_i\}_{i=1}^d$, 
 $\{a_i\}_{i=0}^d$, $\{b_i\}_{i=0}^{d-1}$
in Lemma \ref{lem:defbici} are called
the {\em intersection numbers of $\Phi$}. 
See \cite{T:Leonard}, \cite{T:survey}, \cite{T:notes}
for basic facts about the intersection numbers.


\section{The zero diagonal space for a Leonard pair }
\label{sec:ZD}
\ifDRAFT {\rm sec:ZD}. \fi

In this section, to each Leonard pair $A, A^*$
on $V$ we associate a subspace of $\text{End}(V)$
called the zero diagonal space for $A, A^*$.

\begin{defi}   \label{def:X}   \samepage
\ifDRAFT {\rm def:X}. \fi
For a Leonard system $\Phi = ( A; \{E_i\}_{i=0}^d; A^*; \{E^*_i\}_{i=0}^d)$ on $V$,
let ${\mathcal X}(\Phi)$ denote the subspace of $\text{End}(V)$
consisting of the $X \in \text{End}(V)$ such that
\[
E_i X E_j = 0, 
\qquad 
E^*_i X E^*_j = 0 
\qquad 
\text{ if $|i-j| > 1$} 
\qquad
 (0 \leq i,j \leq d).
\]
\end{defi}

\begin{lemma}   \label{lem:Xaffine}   \samepage
\ifDRAFT {\rm lem:Xaffine}. \fi
Let
$\Phi = ( A; \{E_i\}_{i=0}^d; A^*; \{E^*_i\}_{i=0}^d)$
denote a Leonard system on $V$,
and let $\Phi'$ denote an affine transformation of $\Phi$ from \eqref{eq:affine}.
Then ${\mathcal X} ( \Phi') = {\mathcal X}(\Phi)$.
\end{lemma}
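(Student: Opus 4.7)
The plan is essentially to observe that $\mathcal{X}(\Phi)$ depends on $\Phi$ only through the primitive idempotents $\{E_i\}_{i=0}^d$ and $\{E^*_i\}_{i=0}^d$, and these are unchanged when we pass to $\Phi'$.

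More concretely, I would first note that the affine transformation in \eqref{eq:affine} is written with the \emph{same} sequences $\{E_i\}_{i=0}^d$ and $\{E^*_i\}_{i=0}^d$ as $\Phi$. To confirm that this is legitimate (i.e.\ that these are indeed the primitive idempotents of $\xi A + \zeta I$ and $\xi^* A^* + \zeta^* I$ respectively), I would use the spectral decomposition $A = \sum_{i=0}^d \th_i E_i$, yielding
\[
\xi A + \zeta I = \sum_{i=0}^d (\xi \th_i + \zeta) E_i,
\]
and similarly for $A^*$. Since $\xi,\xi^* \neq 0$ and the $\th_i$ (resp.\ $\th^*_i$) are mutually distinct, the scalars $\xi \th_i + \zeta$ (resp.\ $\xi^* \th^*_i + \zeta^*$) are mutually distinct, so the $E_i$ (resp.\ $E^*_i$) are the primitive idempotents of $\xi A + \zeta I$ (resp.\ $\xi^* A^* + \zeta^* I$).

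Now the defining condition for $\mathcal{X}(\Phi')$ in Definition \ref{def:X} is that $E_i X E_j = 0$ and $E^*_i X E^*_j = 0$ whenever $|i-j|>1$, using the primitive idempotents of $\Phi'$. Since these idempotents coincide with those of $\Phi$, this is verbatim the defining condition for $\mathcal{X}(\Phi)$. Therefore $\mathcal{X}(\Phi') = \mathcal{X}(\Phi)$.

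There is no real obstacle here; the only point needing care is the bookkeeping that the $E_i,E^*_i$ listed in \eqref{eq:affine} truly serve as the primitive idempotents of the new pair, which is immediate from the spectral decomposition and the hypothesis $\xi\xi^* \neq 0$.
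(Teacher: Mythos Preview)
Your proof is correct and follows the same approach as the paper: the paper's proof is the single sentence ``The $\Phi$ and $\Phi'$ have the same $\{E_i\}_{i=0}^d$ and $\{E^*_i\}_{i=0}^d$.'' You simply add the (harmless) verification that the idempotents listed in \eqref{eq:affine} really are the primitive idempotents of the affine-transformed maps, which the paper takes as read.
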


\begin{proof}
The  $\Phi$ and $\Phi'$ have the same $\{E_i\}_{i=0}^d$ and
 $\{E^*_i\}_{i=0}^d$.
\end{proof}

\begin{lemma}   \label{lem:XAAs}   \samepage
\ifDRAFT {\em lem:XAAs}. \fi
Let
$\Phi = ( A; \{E_i\}_{i=0}^d; A^*; \{E^*_i\}_{i=0}^d)$
denote a Leonard system on $V$.
Then 
${\mathcal X}(\Phi ) = {\mathcal X}(\Phi^\downarrow) = {\mathcal X}(\Phi^\Downarrow)$.
\end{lemma}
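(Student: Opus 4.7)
The plan is to unpack the definitions and observe that the three conditions being compared are literally the same set of equations, just reindexed. Concretely, by Definition~\ref{def:X} an element $X \in \text{End}(V)$ lies in $\mathcal{X}(\Phi)$ iff
\[
E_i X E_j = 0 \text{ and } E^*_i X E^*_j = 0 \qquad \text{whenever } |i-j|>1, \quad 0\le i,j\le d.
\]
Since $\Phi^\downarrow = (A; \{E_i\}_{i=0}^d; A^*; \{E^*_{d-i}\}_{i=0}^d)$ shares the same $A$-idempotents with $\Phi$, membership in $\mathcal{X}(\Phi^\downarrow)$ amounts to the $E$-conditions above together with
\[
E^*_{d-i} X E^*_{d-j} = 0 \qquad \text{whenever } |i-j|>1, \quad 0\le i,j\le d.
\]
Analogously for $\Phi^\Downarrow = (A; \{E_{d-i}\}_{i=0}^d; A^*; \{E^*_i\}_{i=0}^d)$, membership involves the $E^*$-conditions from $\Phi$ together with $E_{d-i} X E_{d-j} = 0$ whenever $|i-j|>1$.

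The key step is the elementary observation that the map $i \mapsto d-i$ is an involution of $\{0,1,\dots,d\}$ satisfying $|(d-i)-(d-j)| = |i-j|$. Therefore the set of index pairs $(i,j)$ with $|i-j|>1$ is mapped bijectively to itself under the relabeling $(i,j) \mapsto (d-i,d-j)$. This means the family of equations $\{E^*_{d-i} X E^*_{d-j} = 0 : |i-j|>1\}$ is literally the same family as $\{E^*_i X E^*_j = 0 : |i-j|>1\}$, and similarly for the $E$-idempotents.

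Combining these reindexings yields the three equalities at once: the set of linear constraints defining each of $\mathcal{X}(\Phi)$, $\mathcal{X}(\Phi^\downarrow)$, $\mathcal{X}(\Phi^\Downarrow)$ is identical. There is no real obstacle here; the lemma is essentially a bookkeeping statement that $\mathcal{X}$ only depends on the underlying Leonard pair $A,A^*$ and not on the choice of ordering among the four Leonard systems associated with it. Together with Lemma~\ref{lem:Xaffine} this shows that $\mathcal{X}$ is well defined at the level of Leonard pairs up to affine transformation.
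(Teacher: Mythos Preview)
Your proof is correct and follows the same approach as the paper, which simply writes ``By Definition~\ref{def:X}.'' You have just spelled out in detail the reindexing argument that the paper leaves implicit.
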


\begin{proof}
By Definition \ref{def:X}.
\end{proof}

In view of Lemma \ref{lem:XAAs}, we make a definition.

\begin{defi}   \label{def:XLP}   \samepage
\ifDRAFT {\rm def:XLP}. \fi
For a Leonard pair $A,A^*$ on $V$ 
we define ${\mathcal X}(A,A^*) = {\mathcal X}(\Phi)$,
where $\Phi$ is a Leonard system associated with $A,A^*$.
\end{defi}

\begin{lemma}   {\rm (See \cite[Theorem 3.2]{NT:trid}.) }
\label{lem:trid}   \samepage
\ifDRAFT {\rm lem:trid}. \fi
Let
$A, A^*$ denote a Leonard pair on $V$.
Then the following elements  form a basis for ${\mathcal X}(A, A^*)$:
\[
  I, \, A^*,\,  A,\,  A A^*, \, A^* A.   
\]
\end{lemma}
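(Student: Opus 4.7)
The plan is to establish three things: (i) each of $I, A^*, A, AA^*, A^*A$ lies in $\mathcal{X}(A,A^*)$; (ii) these five elements are linearly independent; (iii) they span $\mathcal{X}(A,A^*)$. For step (i), I would verify the two vanishing conditions directly. The identity $I$ is trivial. For $A$, $E_i A E_j = \th_i E_i E_j = 0$ for $|i-j|>1$, and $E^*_i A E^*_j = 0$ for $|i-j|>1$ by Lemma \ref{lem:trid0}; the case of $A^*$ is symmetric. For $AA^*$, factor through either $A$ or $A^*$: $E_i(AA^*)E_j = \th_i E_i A^* E_j$ which vanishes for $|i-j|>1$ by Lemma \ref{lem:trid0}, while $E^*_i(AA^*)E^*_j = \th^*_j E^*_i A E^*_j$ vanishes likewise. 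The case of $A^*A$ is analogous.

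For step (ii), I would pass to the $\Phi$-standard basis of Definition \ref{def:Phistandard}, where by Lemma \ref{lem:defbici} the map $A^*$ is diagonal with distinct entries $\th^*_i$, while $A$ is irreducible tridiagonal with nonzero super-diagonal $b_i$ and sub-diagonal $c_{i+1}$. Suppose $\alpha I + \beta A^* + \gamma A + \delta AA^* + \epsilon A^*A = 0$. Reading the $(i,i{+}1)$ entry gives $b_i(\gamma + \delta\th^*_{i+1} + \epsilon\th^*_i) = 0$, and reading the $(i{+}1,i)$ entry gives $c_{i+1}(\gamma + \delta\th^*_i + \epsilon\th^*_{i+1}) = 0$ for $0 \le i \le d-1$. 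Subtracting yields $(\delta - \epsilon)(\th^*_{i+1} - \th^*_i) = 0$ so $\delta = \epsilon$, and adding yields $\gamma + \delta(\th^*_i + \th^*_{i+1}) = 0$; comparing two values of $i$ (legal since $d \ge 3$) together with distinctness of the $\th^*_i$ forces $\gamma = \delta = \epsilon = 0$. The diagonal equation then reduces to $\alpha + \beta \th^*_i = 0$ for all $i$, yielding $\alpha = \beta = 0$.

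Step (iii), the spanning property, is the main obstacle. Given $X \in \mathcal{X}(A,A^*)$, the condition $E^*_i X E^*_j = 0$ for $|i-j|>1$ forces $X$ to be tridiagonal in the $\Phi$-standard basis, so $X$ is specified by $3d+1$ parameters: a diagonal $\alpha_i$, a super-diagonal $\beta_i$, and a sub-diagonal $\gamma_i$. The further condition $E_i X E_j = 0$ for $|i-j|>1$ must cut this down to five dimensions. My approach would be to use \eqref{eq:Ei} to express each $E_i$ as a polynomial of degree $d$ in $A$ and extract the vanishing of $E_i X E_j$ as a linear system in the $\alpha_i, \beta_i, \gamma_i$; alternatively, one can pass to the $\Phi$-split basis of Definition \ref{def:splitbasis}, where $A$ is lower bidiagonal and $A^*$ is upper bidiagonal by Lemma \ref{lem:defvphi}, so both tridiagonality conditions become simultaneously tractable. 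The hardest part will be to show that this system forces the sequences $\beta_i/b_i$ and $\gamma_i/c_{i+1}$ to lie in the three-dimensional span of $\{1, \th^*_i, \th^*_{i+1}\}$ (with matching coefficients) and the diagonal $\alpha_i$ to lie in the span of $\{1, \th^*_i, a_i, a_i\th^*_i\}$, thereby recovering precisely the patterns seen in $\{I, A^*, A, AA^*, A^*A\}$; this will require careful comparison of the bi-tridiagonality constraints across consecutive indices $i$, exploiting the irreducibility of the Leonard pair and the nonvanishing of $b_i, c_{i+1}$.
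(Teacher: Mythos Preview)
The paper does not give its own proof of this lemma; it simply cites \cite[Theorem~3.2]{NT:trid}. So there is no in-paper argument to compare against, and your task is really to reprove the cited result.

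Your steps (i) and (ii) are correct and complete. The computation in (ii) is clean; note that $d\ge 2$ already suffices to extract $\delta=0$ from the two equations $\gamma+\delta(\th^*_i+\th^*_{i+1})=0$ for $i=0,1$, so the parenthetical ``legal since $d\ge 3$'' is stronger than needed, but harmless under the paper's standing hypothesis.

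Step (iii), as you acknowledge, is only a plan. Your target description (what the entries of $X$ must look like in the $\Phi$-standard basis) is correct, but your proposed route via expanding each $E_i$ as a degree-$d$ polynomial in $A$ would be unmanageable. The split-basis suggestion is the right one, and the key observation that makes it work is this: from $u_i \in (E^*_0+\cdots+E^*_i)V \cap (E_i+\cdots+E_d)V$ one gets
\[
\text{span}\{u_0,\ldots,u_j\} = (E^*_0+\cdots+E^*_j)V,
\qquad
\text{span}\{u_j,\ldots,u_d\} = (E_j+\cdots+E_d)V.
\]
The condition $E^*_iXE^*_j=0$ for $j>i+1$ then forces $X$ to have at most one subdiagonal in the split basis, and the condition $E_iXE_j=0$ for $i>j+1$ forces at most one superdiagonal; so $X$ is \emph{split-tridiagonal}. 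This cuts the problem to $3d+1$ parameters, and the remaining two ``halves'' of the tridiagonality conditions (namely $E^*_iXE^*_j=0$ for $i>j+1$ and $E_iXE_j=0$ for $j>i+1$) become tractable recursions on those parameters. Your write-up does not yet contain this flag argument or the ensuing recursion, and without it the spanning claim is unproved; you should either supply it or, as the paper does, invoke \cite{NT:trid} directly.
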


\begin{defi}   \label{def:zerodiagonal}   \samepage
\ifDRAFT {\rm def:zeriduagibal}. \fi
Let
$\Phi = ( A; \{E_i\}_{i=0}^d; A^*; \{E^*_i\}_{i=0}^d)$
denote a Leonard system on $V$.
An element $X \in {\mathcal X}(\Phi)$ is said to have {\em zero diagonal}
whenever $E^*_i X E^*_i = 0$ for $0 \leq i \leq d$.
\end{defi}

\begin{lemma}   \label{lem:AAsmAsA}   \samepage
\ifDRAFT {\rm lem:AAsmAsA}. \fi
Let
$\Phi = ( A; \{E_i\}_{i=0}^d; A^*; \{E^*_i\}_{i=0}^d)$
denote a Leonard system on $V$.
Then the element $A A^* - A^* A$ has zero diagonal.
\end{lemma}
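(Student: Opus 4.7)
The plan is to check the two requirements of Definition \ref{def:zerodiagonal}: that $AA^* - A^*A$ lies in $\mathcal{X}(\Phi)$, and that all its $E^*_i \cdot E^*_i$ blocks vanish.

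For membership in $\mathcal{X}(\Phi)$, I would simply invoke Lemma \ref{lem:trid} (via Definition \ref{def:XLP} and Lemma \ref{lem:XAAs}): the elements $AA^*$ and $A^*A$ appear among the basis vectors listed there for $\mathcal{X}(A,A^*) = \mathcal{X}(\Phi)$, so their difference lies in $\mathcal{X}(\Phi)$ as well.

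For the zero diagonal condition, the key computation rests on the eigenvalue identity $E^*_i A^* = A^* E^*_i = \theta^*_i E^*_i$, which follows from the fact that $\{E^*_i\}_{i=0}^d$ are the primitive idempotents of $A^*$ with associated eigenvalues $\{\theta^*_i\}_{i=0}^d$ (Definition \ref{def:eigenseq}). Applying this identity on the right to the first product and on the left to the second gives
\begin{align*}
E^*_i A A^* E^*_i &= E^*_i A (\theta^*_i E^*_i) = \theta^*_i\, E^*_i A E^*_i, \\
E^*_i A^* A E^*_i &= (\theta^*_i E^*_i) A E^*_i = \theta^*_i\, E^*_i A E^*_i,
\end{align*}
so $E^*_i (AA^* - A^*A) E^*_i = 0$ for $0 \leq i \leq d$, as required.

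There is no real obstacle here; the statement is essentially a one-line consequence of the fact that $E^*_i$ absorbs $A^*$ on either side as multiplication by $\theta^*_i$, so the two cross terms $E^*_i AA^* E^*_i$ and $E^*_i A^*A E^*_i$ automatically coincide.
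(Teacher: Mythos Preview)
Your proof is correct and matches the paper's argument essentially line for line; the paper computes $E^*_i(AA^*-A^*A)E^*_i = \theta^*_i E^*_i A E^*_i - \theta^*_i E^*_i A E^*_i = 0$ using exactly the absorption identity you used. You are in fact slightly more careful than the paper in explicitly verifying the membership $AA^*-A^*A \in \mathcal{X}(\Phi)$ required by Definition~\ref{def:zerodiagonal}, which the paper leaves implicit.
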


\begin{proof}
Let $\{\th^*_i\}_{i=0}^d$ denote the dual eigenvalue sequence of $\Phi$.
For $0 \leq i \leq d$ we have
$E^*_i A^* = \th^*_i E^*_i$ and
$A^* E^*_i = \th^*_i E^*_i$.
Therefore
\[
E^*_i (A A^* - A^* A) E^*_i
= E^*_i A A^* E^*_i - E^*_i A^* A E^*_i
= \th^*_i E^*_i A E^*_i -  \th^*_i E^*_i A E^*_i = 0.
\]
\end{proof}

\begin{lemma}    \label{lem:newbasis}  \samepage
\ifDRAFT {\rm lemk:newbasis}. \fi
Let $A,A^*$ denote a Leonard pair on $V$.
Then the  following elements form a basis for ${\mathcal X}(A,A^*)$.
\[
I,  \;
A^*,  \;
A,  \;
A A^*,  \;
A A^* - A^* A.
\]
\end{lemma}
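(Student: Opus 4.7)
The plan is to leverage Lemma \ref{lem:trid}, which already provides the basis $I, A^*, A, AA^*, A^*A$ for $\mathcal{X}(A, A^*)$, so in particular $\dim \mathcal{X}(A, A^*) = 5$. Since the proposed new list has exactly $5$ elements, it suffices either to show that they span $\mathcal{X}(A, A^*)$ or that they are linearly independent.

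First I would verify that each of $I, A^*, A, AA^*, AA^* - A^*A$ lies in $\mathcal{X}(A, A^*)$. The first four are already in the basis from Lemma \ref{lem:trid}, so they are in $\mathcal{X}(A, A^*)$. For the fifth, $AA^* - A^*A$ is the difference $AA^* - A^*A$ of two elements of $\mathcal{X}(A, A^*)$, hence lies in $\mathcal{X}(A, A^*)$ by linearity.

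Next I would show that the transition from the old basis to the new list is invertible. The only change is in the fifth slot, where $A^*A$ is replaced by $AA^* - A^*A$. Expressing the old basis in terms of the new: the first four elements are unchanged, and $A^*A = AA^* - (AA^* - A^*A)$, so every element of the old basis lies in the span of the new list. Therefore the new list spans $\mathcal{X}(A, A^*)$. Since it has $5$ elements and $\dim \mathcal{X}(A, A^*) = 5$, it is a basis.

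No step is really an obstacle: the argument is a one-line change-of-basis observation, with the only substance being the invocation of Lemma \ref{lem:trid} to pin down the ambient dimension. The only thing to be a bit careful about is that $AA^* - A^*A$ genuinely lies in $\mathcal{X}(A, A^*)$, which follows immediately from the fact that $\mathcal{X}(A, A^*)$ is a subspace containing both $AA^*$ and $A^*A$.
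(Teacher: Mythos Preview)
Your proposal is correct and takes essentially the same approach as the paper: both invoke Lemma~\ref{lem:trid} to get the original basis $I, A^*, A, AA^*, A^*A$, and then the result follows by an obvious change of basis. The paper's proof is the single line ``By Lemma~\ref{lem:trid}'', so you have simply spelled out the implicit linear-algebra step in more detail.
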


\begin{proof}
By  Lemma \ref{lem:trid}.
\end{proof}

\begin{defi}   \label{def:Z}   \samepage
\ifDRAFT {\rm def:Z}. \fi
Let
$\Phi = ( A; \{E_i\}_{i=0}^d; A^*; \{E^*_i\}_{i=0}^d)$
denote a Leonard system on $V$.
Let ${\mathcal Z}(\Phi)$ denote the set of elements in
$\text{\rm Span}\{ I, A^*, A, A A^*\}$
that have zero diagonal.
Note that ${\mathcal Z}(\Phi)$ is a subspace of
$\text{\rm Span}\{ I, A^*, A, A A^*\}$.
We call ${\mathcal Z}(\Phi)$ the
{\em zero diagonal space} for $\Phi$.
\end{defi}

\begin{lemma}   \label{lem:decomp}    \samepage
\ifDRAFT {\rm lem:decomp}. \fi
Let
$\Phi = ( A; \{E_i\}_{i=0}^d; A^*; \{E^*_i\}_{i=0}^d)$
denote a Leonard system on $V$.
Then for $X \in {\mathcal X}(\Phi)$ the following are equivalent:
\begin{itemize}
\item[\rm (i)]
$X$ has zero diagonal;
\item[\rm (ii)]
$X \in \text{\rm Span} \{A A^* - A^* A\} + {\mathcal Z}(\Phi)$.
\end{itemize}
\end{lemma}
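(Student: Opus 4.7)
The plan is to prove the equivalence by a direct linear-algebraic argument using the basis for $\mathcal{X}(\Phi)$ from Lemma \ref{lem:newbasis} together with Lemma \ref{lem:AAsmAsA}. The key observation is that the ``diagonal'' map $X \mapsto \sum_{i=0}^d E^*_i X E^*_i$ is $\F$-linear on $\text{End}(V)$, so the subset of $\mathcal{X}(\Phi)$ consisting of zero diagonal elements is a subspace.

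For the implication (ii) $\Rightarrow$ (i), I would argue directly: by Lemma \ref{lem:AAsmAsA} the element $A A^* - A^* A$ has zero diagonal, and every element of $\mathcal{Z}(\Phi)$ has zero diagonal by Definition \ref{def:Z}. By linearity any element of $\text{Span}\{A A^* - A^* A\} + \mathcal{Z}(\Phi)$ has zero diagonal as well.

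For the converse (i) $\Rightarrow$ (ii), suppose $X \in \mathcal{X}(\Phi)$ has zero diagonal. By Lemma \ref{lem:newbasis} there exist unique scalars $\alpha, \beta, \gamma, \delta, \mu \in \F$ such that
\[
 X = \alpha I + \beta A^* + \gamma A + \delta A A^* + \mu (A A^* - A^* A).
\]
Set $Y = X - \mu(A A^* - A^* A) = \alpha I + \beta A^* + \gamma A + \delta A A^*$, so $Y \in \text{Span}\{I, A^*, A, A A^*\}$. Since $X$ has zero diagonal and $A A^* - A^* A$ has zero diagonal by Lemma \ref{lem:AAsmAsA}, linearity forces $Y$ to have zero diagonal. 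Hence $Y \in \mathcal{Z}(\Phi)$ by Definition \ref{def:Z}, and
\[
 X = \mu(A A^* - A^* A) + Y \in \text{Span}\{A A^* - A^* A\} + \mathcal{Z}(\Phi),
\]
as required.

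There is essentially no serious obstacle here: the statement is a routine consequence of the explicit basis from Lemma \ref{lem:newbasis} together with the zero-diagonal property of the commutator established in Lemma \ref{lem:AAsmAsA}. The only subtlety worth recording is that the decomposition used in the forward direction relies on the uniqueness of the coefficient $\mu$, which comes from linear independence of the basis in Lemma \ref{lem:newbasis}.
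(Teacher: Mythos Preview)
Your proof is correct and follows essentially the same approach as the paper, which simply cites Lemmas \ref{lem:AAsmAsA}, \ref{lem:newbasis}, and Definition \ref{def:Z}; you have merely written out explicitly the linear-algebraic details that the paper leaves to the reader.
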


\begin{proof}
By Lemmas \ref{lem:AAsmAsA}, \ref{lem:newbasis}
and Definition \ref{def:Z}.
\end{proof}

\begin{lemma}   \label{lem:Zaffine}   \samepage
\ifDRAFT {\rm lem:Zaffine}. \fi
Let
$\Phi = ( A; \{E_i\}_{i=0}^d; A^*; \{E^*_i\}_{i=0}^d)$
denote a Leonard system on $V$.
Let $\Phi'$ denote an affine transformation of $\Phi$ from \eqref{eq:affine}.
Then ${\mathcal Z} ( \Phi') = {\mathcal Z}(\Phi)$.
\end{lemma}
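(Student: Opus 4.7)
The plan is to observe that the affine transformation $\Phi'$ shares the same primitive idempotents $\{E^*_i\}_{i=0}^d$ with $\Phi$, so the zero diagonal condition $E^*_i X E^*_i = 0$ for $0 \leq i \leq d$ is literally the same condition for both Leonard systems. Hence the only thing to verify is that the ambient spans agree:
\[
\text{\rm Span}\{I, \xi^* A^* + \zeta^* I, \xi A + \zeta I, (\xi A + \zeta I)(\xi^* A^* + \zeta^* I)\}
= \text{\rm Span}\{I, A^*, A, AA^*\}.
\]

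First I would expand
\[
(\xi A + \zeta I)(\xi^* A^* + \zeta^* I) = \xi\xi^* AA^* + \xi\zeta^* A + \zeta\xi^* A^* + \zeta\zeta^* I,
\]
so the left-hand span is contained in the right-hand span. Since $\xi\xi^* \neq 0$, the relations $A^* = (\xi^*)^{-1}(A^{*\prime} - \zeta^* I)$, $A = \xi^{-1}(A' - \zeta I)$, and then $AA^* = (\xi\xi^*)^{-1}(A'A^{*\prime} - \xi\zeta^* A - \zeta\xi^* A^* - \zeta\zeta^* I)$ give the reverse containment. Thus the two spans are equal.

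Combining the equality of spans with the identity of the condition $E^*_i X E^*_i = 0$ yields ${\mathcal Z}(\Phi') = {\mathcal Z}(\Phi)$. There is no substantive obstacle here; the entire argument is a one-line unpacking of Definition \ref{def:Z} together with the explicit form of an affine transformation in \eqref{eq:affine}. In fact, this proof runs in parallel to Lemma \ref{lem:Xaffine}, which handled ${\mathcal X}(\Phi') = {\mathcal X}(\Phi)$ by exactly the same observation about shared $\{E_i\}$ and $\{E^*_i\}$.
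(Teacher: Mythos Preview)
Your proof is correct and follows the same approach as the paper, which simply remarks that $\Phi$ and $\Phi'$ share the same $\{E^*_i\}_{i=0}^d$. You are slightly more explicit in verifying that the ambient spans $\text{Span}\{I,A^*,A,AA^*\}$ and $\text{Span}\{I,A^{*\prime},A',A'A^{*\prime}\}$ coincide, a point the paper leaves implicit; this extra care is harmless and arguably makes the argument cleaner.
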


\begin{proof}
The  $\Phi$ and $\Phi'$ have the same 
 $\{E^*_i\}_{i=0}^d$.
\end{proof}

\begin{lemma}   \label{lem:ZAAs}   \samepage
\ifDRAFT {\em lem:ZAAs}. \fi
Let
$\Phi = ( A; \{E_i\}_{i=0}^d; A^*; \{E^*_i\}_{i=0}^d)$
denote a Leonard system on $V$.
Then  ${\mathcal Z}(\Phi ) = {\mathcal Z}(\Phi^\downarrow) = {\mathcal Z}(\Phi^\Downarrow)$.
\end{lemma}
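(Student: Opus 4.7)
The plan is to peel off the two equalities separately, observing that each is essentially a consequence of the fact that the ingredients defining $\mathcal{Z}(\Phi)$ depend only on $A$, $A^*$, and the \emph{unordered set} of primitive idempotents $\{E^*_i\}_{i=0}^d$.

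First I would verify that in all three Leonard systems the ambient subspace $\text{Span}\{I,A^*,A,AA^*\}$ is literally the same, since $\Phi$, $\Phi^\downarrow$, $\Phi^\Downarrow$ share the elements $A$ and $A^*$; this uses only the definitions of $\Phi^\downarrow$ and $\Phi^\Downarrow$ following \eqref{eq:affine}. For the equality $\mathcal{Z}(\Phi) = \mathcal{Z}(\Phi^\Downarrow)$, I would note that $\Phi$ and $\Phi^\Downarrow$ share both $A^*$ and the entire ordered sequence $\{E^*_i\}_{i=0}^d$, so the zero-diagonal condition $E^*_i X E^*_i = 0$ for $0 \le i \le d$ is identical in the two systems; this mirrors the argument of Lemma \ref{lem:Zaffine}.

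For the equality $\mathcal{Z}(\Phi) = \mathcal{Z}(\Phi^\downarrow)$, I would observe that $\Phi^\downarrow$ uses the reordered sequence $\{E^*_{d-i}\}_{i=0}^d$, but the set of projections $\{E^*_0, E^*_1, \ldots, E^*_d\}$ is unchanged. Since the zero-diagonal condition asks that $E^*_i X E^*_i = 0$ for \emph{every} index in $\{0,1,\ldots,d\}$, reindexing $i \mapsto d-i$ is a bijection on this index set and therefore leaves the set of conditions invariant. Combined with the equality of the ambient spans, this gives $\mathcal{Z}(\Phi^\downarrow) = \mathcal{Z}(\Phi)$.

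There is no real obstacle here; the lemma is the zero-diagonal analogue of Lemma \ref{lem:XAAs}, and both equalities reduce to the observation that the relevant data are insensitive to the orderings altered by $\downarrow$ and $\Downarrow$.
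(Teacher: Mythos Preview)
Your argument is correct and is essentially the same as the paper's, which simply cites Definition~\ref{def:Z}: the span $\text{Span}\{I,A^*,A,AA^*\}$ and the unordered family $\{E^*_i\}_{i=0}^d$ are unchanged under $\downarrow$ and $\Downarrow$, so the zero-diagonal condition is the same in all three systems. You have merely unpacked that one-line proof in more detail.
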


\begin{proof}
By Definition \ref{def:Z}.
\end{proof}

In view of Lemma \ref{lem:ZAAs}, we make a definition.

\begin{defi}   \label{def:ZLP}   \samepage
\ifDRAFT {\rm def:ZLP}. \fi
For a Leonard pair $A,A^*$ on $V$ 
we define ${\mathcal Z}(A,A^*) = {\mathcal Z}(\Phi)$,
where $\Phi$ is a Leonard system associated with $A,A^*$.
We call  ${\mathcal Z}(A,A^*)$ the {\em zero diagonal space for $A,A^*$}.
\end{defi}

We mention a result for later use.

\begin{lemma}   \label{lem:linindep}    \samepage
\ifDRAFT {\rm lem:linindep}. \fi
Let
$\Phi = ( A; \{E_i\}_{i=0}^d; A^*; \{E^*_i\}_{i=0}^d)$
denote a Leonard system on $V$.
Then the following elements are linearly independent:
\[
(A-a_0 I)(A^* - \th^*_d I),
\qquad\qquad
(A-a_d I)(A^* - \th^*_0 I).
\]
\end{lemma}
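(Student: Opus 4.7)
The plan is to assume a linear dependence
$$\alpha (A-a_0 I)(A^* - \th^*_d I) + \beta (A-a_d I)(A^* - \th^*_0 I) = 0$$
for scalars $\alpha, \beta \in \F$ and show that $\alpha = \beta = 0$. First I would expand both products and collect terms according to the basis monomials $I, A^*, A, AA^*$, which gives
$$(\alpha + \beta) AA^* - (\alpha \th^*_d + \beta \th^*_0) A - (\alpha a_0 + \beta a_d) A^* + (\alpha a_0 \th^*_d + \beta a_d \th^*_0) I = 0.$$

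Next I would invoke Lemma \ref{lem:trid}, which asserts that $I, A^*, A, AA^*, A^*A$ form a basis of $\mathcal{X}(A,A^*)$; in particular the four elements $I, A^*, A, AA^*$ are linearly independent in $\text{End}(V)$. Consequently each coefficient in the displayed equation must vanish. From $\alpha + \beta = 0$ we get $\beta = -\alpha$, and substituting into the coefficient of $A$ yields $\alpha(\th^*_d - \th^*_0) = 0$. Since $\{\th^*_i\}_{i=0}^d$ are mutually distinct (they are the eigenvalues of the multiplicity-free element $A^*$, cf.\ Lemma \ref{lem:mfree}), we have $\th^*_d \neq \th^*_0$, forcing $\alpha = 0$ and hence $\beta = 0$.

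There is essentially no obstacle here; the argument is a direct computation leveraging the previously established linear independence of $\{I, A^*, A, AA^*, A^*A\}$. The only point worth flagging is the appeal to $\th^*_d \neq \th^*_0$, which is why the standing assumption $d \geq 3$ (and in fact just $d \geq 1$) is enough to make the argument go through.
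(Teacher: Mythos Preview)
Your proof is correct and follows exactly the paper's approach: expand in terms of $I, A^*, A, AA^*$, invoke their linear independence via Lemma~\ref{lem:trid}, and conclude using $\th^*_0 \neq \th^*_d$. The paper's proof is just a terse two-line version of what you wrote out in detail.
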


\begin{proof}
By Lemma \ref{lem:trid}
the elements $I, A^*, A,A A^*$ are linearly independent.
The result follows from this and $\th^*_0 \neq \th^*_d$.
\end{proof}

\section{The matrix $M$}
\label{sec:M}
\ifDRAFT {\rm secM}. \fi

For the rest of this paper, we fix a Leonard system
\begin{equation}
\Phi = ( A; \{E_i\}_{i=0}^d; A^*; \{E^*_i\}_{i=0}^d)     \label{eq:Phi}
\end{equation}
on $V$ with parameter array
\[
(\{\th_i\}_{i=0}^d; \{\th^*_i\}_{i=0}^d; \{\vphi_i\}_{i=1}^d; \{\phi_i\}_{i=1}^d).
\]
Let the scalars $\{a_i\}_{i=0}^d$ be from Definition \ref{def:ai}.
In this section we introduce a matrix $M$,
and explain how the rank of $M$ is related to the dimension of ${\mathcal Z}(\Phi)$.

\begin{lemma}   \label{lem:cond}   \samepage
\ifDRAFT {\rm lem:cond}. \fi
For scalars $f_0,f_1,f_2,f_3$ the following are equivalent:
\begin{itemize}
\item[\rm (i)]
$f_0 I + f_1 A^* + f_2 A + f_3 A A^* \in {\mathcal Z}(\Phi)$;
\item[\rm (ii)]
for $0 \leq i  \leq d$,
\[
  f_0 + f_1 \th^*_i  + f_2 a_i + f_3 a_i \th^*_i = 0. 
\]
\end{itemize}
\end{lemma}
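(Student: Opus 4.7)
The plan is a direct computation: the zero-diagonal condition is defined projection-by-projection, so I would just compute $E^*_i X E^*_i$ for $X = f_0 I + f_1 A^* + f_2 A + f_3 A A^*$ and read off when it vanishes.

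First I would handle each of the four basis terms of $\text{Span}\{I, A^*, A, AA^*\}$ separately. We have $E^*_i I E^*_i = E^*_i$, and $E^*_i A^* E^*_i = \theta^*_i E^*_i$ since $A^* E^*_i = \theta^*_i E^*_i = E^*_i A^*$ (from the standard properties of primitive idempotents recalled in Section \ref{sec:prili}). By Lemma \ref{lem:ai} we have $E^*_i A E^*_i = a_i E^*_i$. Finally, for the product term I would write
\[
E^*_i A A^* E^*_i = E^*_i A (\theta^*_i E^*_i) = \theta^*_i\, E^*_i A E^*_i = a_i \theta^*_i E^*_i,
\]
using $A^* E^*_i = \theta^*_i E^*_i$ and then Lemma \ref{lem:ai} again.

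Combining these by linearity yields
\[
E^*_i X E^*_i = (f_0 + f_1 \theta^*_i + f_2 a_i + f_3 a_i \theta^*_i)\, E^*_i
\qquad (0 \leq i \leq d).
\]
Since $E^*_i \neq 0$, the vanishing of the left-hand side for each $i$ is equivalent to condition (ii). Because $X$ already lies in $\text{\rm Span}\{I, A^*, A, AA^*\}$, the membership $X \in \mathcal{Z}(\Phi)$ is, by Definitions \ref{def:zerodiagonal} and \ref{def:Z}, exactly the requirement that $E^*_i X E^*_i = 0$ for all $i$. This gives the equivalence.

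There is really no obstacle here; the argument is a four-line calculation once Lemma \ref{lem:ai} is in hand. The only thing to be careful about is the order of the factors in $AA^*$: one must push the $A^*$ onto $E^*_i$ first to collapse it to the scalar $\theta^*_i$, and only then apply Lemma \ref{lem:ai} to the remaining $E^*_i A E^*_i$.
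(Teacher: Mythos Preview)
Your proof is correct and follows exactly the same approach as the paper: compute $E^*_i X E^*_i$ term by term to obtain $(f_0 + f_1\theta^*_i + f_2 a_i + f_3 a_i\theta^*_i)E^*_i$, then use $E^*_i \neq 0$. The paper compresses this into a single displayed equation, but the content is identical.
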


\begin{proof}
For $0 \leq i \leq d$ we have
\[
E^*_i (f_0 I + f_1 A^* + f_2 A + f_3 A A^*) E^*_i
 = (f_0 + f_1 \th^*_i + f_2 a_i + f_3 a_i \th^*_i) E^*_i.
\]
The result follows.
\end{proof}

\begin{lemma}   \label{lem:cond2}   \samepage
\ifDRAFT {\rm lem:cond2}. \fi
The following are equivalent:
\begin{itemize}
\item[\rm (i)]
${\mathcal Z}(\Phi) \neq 0$;
\item[\rm (ii)]
there exist scalars $f_0,f_1,f_2,f_3$ (not all zero) such that
\begin{equation}
  f_0 + f_1 \th^*_i  + f_2 a_i + f_3 a_i \th^*_i = 0
\qquad\qquad (0 \leq i \leq d).
  \label{eq:Z}
\end{equation}
\end{itemize}
\end{lemma}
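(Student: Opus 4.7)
The plan is to derive this lemma quickly from Lemma \ref{lem:cond} together with the linear independence provided by Lemma \ref{lem:trid}. Recall that ${\mathcal Z}(\Phi)$ is defined in Definition \ref{def:Z} as the subspace of $\text{Span}\{I, A^*, A, AA^*\}$ consisting of elements with zero diagonal. So every element of ${\mathcal Z}(\Phi)$ has the form $X = f_0 I + f_1 A^* + f_2 A + f_3 AA^*$ for some scalars $f_0,f_1,f_2,f_3$, and by Lemma \ref{lem:cond} this $X$ lies in ${\mathcal Z}(\Phi)$ precisely when \eqref{eq:Z} holds.

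For the implication (i)$\Rightarrow$(ii), I would pick a nonzero element $X \in {\mathcal Z}(\Phi)$, write it uniquely in the form $X = f_0 I + f_1 A^* + f_2 A + f_3 AA^*$, and observe that the uniqueness together with $X \neq 0$ forces not all of $f_0,f_1,f_2,f_3$ to be zero. The uniqueness comes from Lemma \ref{lem:trid}, which guarantees that $I, A^*, A, AA^*$ are linearly independent (they are part of a basis for ${\mathcal X}(A,A^*)$). Then Lemma \ref{lem:cond} immediately yields \eqref{eq:Z}.

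For the converse (ii)$\Rightarrow$(i), I would start from scalars $f_0,f_1,f_2,f_3$, not all zero, satisfying \eqref{eq:Z}, and form $X = f_0 I + f_1 A^* + f_2 A + f_3 AA^*$. By linear independence of $I, A^*, A, AA^*$ the element $X$ is nonzero, and by Lemma \ref{lem:cond} it satisfies $E^*_i X E^*_i = 0$ for $0 \leq i \leq d$, so $X \in {\mathcal Z}(\Phi)$, giving ${\mathcal Z}(\Phi) \neq 0$.

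There is no genuine obstacle here; the only point one must be careful about is not to confuse the two directions of Lemma \ref{lem:cond} (which is a scalar-level statement) with the existence statement in Lemma \ref{lem:cond2}. The bridge between them is exactly the linear independence of the four elements $I, A^*, A, AA^*$, which justifies the translation between ``there exist scalars not all zero'' and ``there exists a nonzero element of the span''.
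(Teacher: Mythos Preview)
Your proof is correct and follows the same approach as the paper, which simply writes ``By Lemma \ref{lem:cond}.'' You have just made explicit the one point the paper leaves implicit, namely that the linear independence of $I, A^*, A, AA^*$ (from Lemma \ref{lem:trid}) is what identifies ``nonzero element of the span'' with ``not all $f_i$ zero.''
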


\begin{proof}
By Lemma \ref{lem:cond}.
\end{proof}

\begin{defi}   \label{def:M}   \samepage
\ifDRAFT {\rm def:M}. \fi
Define a matrix  $M = M(\Phi)$  of size $4 \times (d+1)$ by
\[
M =
\begin{pmatrix}
1 & 1 & \cdots  & 1
\\
\th^*_0 & \th^*_1 & \dots & \th^*_d
\\
a_0  & a_1 & \cdots & a_d
\\
a_0 \th^*_0 & a_1 \th^*_1 & \cdots & a_d \th^*_d
\end{pmatrix}.     
\]
\end{defi}

\begin{lemma}   \label{lem:rankM}    \samepage
\ifDRAFT {\rm lem:rankM}. \fi
We have
$2 \leq \text{\rm rank}(M) \leq 4$.
\end{lemma}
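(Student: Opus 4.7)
The upper bound $\mathrm{rank}(M) \le 4$ is immediate: $M$ has only four rows, so its rank cannot exceed $4$.

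For the lower bound $\mathrm{rank}(M) \ge 2$, the plan is to show that the first two rows of $M$ are linearly independent. Suppose $\alpha(1,1,\ldots,1) + \beta(\th^*_0,\th^*_1,\ldots,\th^*_d) = 0$ for scalars $\alpha,\beta \in \F$. Then $\alpha + \beta \th^*_i = 0$ for $0 \leq i \leq d$. Since $A^*$ is multiplicity-free by Lemma \ref{lem:mfree}, the scalars $\{\th^*_i\}_{i=0}^d$ are mutually distinct. In particular $\th^*_0 \neq \th^*_1$, so subtracting the relations for $i=0$ and $i=1$ forces $\beta=0$, and then $\alpha = 0$. Hence the first two rows are linearly independent, which gives $\mathrm{rank}(M) \ge 2$.

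There is no real obstacle here; the argument is essentially a Vandermonde-style observation using only the multiplicity-freeness of $A^*$, which is already available from Lemma \ref{lem:mfree}. The hypothesis $d \geq 3$ ensures that $M$ has at least four columns, so the statement $\mathrm{rank}(M) \le 4$ is not vacuous, but the actual argument for the inequalities needs only that $d \geq 1$.
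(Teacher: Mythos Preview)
Your proof is correct and follows essentially the same approach as the paper: the paper also notes that $\mathrm{rank}(M)\le 4$ is clear and that the top two rows are linearly independent because the $\th^*_i$ are mutually distinct. You have simply spelled out the linear independence argument in more detail.
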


\begin{proof}
Clearly $\text{rank}(M) \leq 4$.
The top two rows of $M$ are linearly independent, since $\{\th^*_i\}_{i=0}^d$
are mutually distinct.
Therefore  $\text{rank}(M) \geq 2$.
\end{proof}

\begin{defi}    \label{def:psi}   \samepage
\ifDRAFT {\rm def:psi}. \fi
Define an $\F$-linear map 
\[
\psi : \text{\rm Span}\{I,A^*,A,A A^*\}  \to \F^{d+1}
\]
that sends
\[
 f_0 I + f_1 A^* + f_2 A + f_3 A A^* \mapsto  (f_0, f_1, f_2, f_3) M.
\]
\end{defi}

\begin{lemma}   \label{lem:Impsi}   \samepage
\ifDRAFT {\rm lem:Impsi}. \fi
The following hold:
\begin{itemize}
\item[\rm (i)]
$\text{\rm Ker}(\psi) = {\mathcal Z}(\Phi)$;
\item[\rm (ii)]
$\dim \text{\rm Im} (\psi) = \text{\rm rank}(M)$;
\item[\rm (iii)]
$\dim  {\mathcal Z}(\Phi) = 4 - \text{\rm rank}(M)$.
\end{itemize}
\end{lemma}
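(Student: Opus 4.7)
The plan is to observe that $\psi$ is simply the coordinate expression, in the basis $\{I, A^*, A, AA^*\}$, of the map that evaluates the diagonal entries $E^*_i X E^*_i$ for $0 \leq i \leq d$, and then to apply rank-nullity.

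First I would record that $I, A^*, A, AA^*$ are linearly independent: this follows from Lemma \ref{lem:trid}, which gives that $I, A^*, A, AA^*, A^*A$ form a basis of ${\mathcal X}(A,A^*)$. Consequently, the domain $\text{Span}\{I,A^*,A,AA^*\}$ of $\psi$ has dimension $4$, and scalars $f_0,f_1,f_2,f_3$ are uniquely determined by the element $f_0 I + f_1 A^* + f_2 A + f_3 AA^*$, so $\psi$ is well defined.

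For part (i), I would unpack the definition of $\psi$: by Definition \ref{def:M} the $i$th entry of the row vector $(f_0,f_1,f_2,f_3) M$ is exactly
\[
f_0 + f_1 \th^*_i + f_2 a_i + f_3 a_i \th^*_i
\qquad (0 \leq i \leq d).
\]
Hence $\psi(X) = 0$ if and only if this quantity vanishes for all $i$, which by Lemma \ref{lem:cond} is equivalent to $X \in {\mathcal Z}(\Phi)$. This gives $\text{Ker}(\psi) = {\mathcal Z}(\Phi)$.

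For part (ii), as $(f_0,f_1,f_2,f_3)$ ranges over $\F^4$, the vector $(f_0,f_1,f_2,f_3) M$ ranges over the row space of $M$. Therefore $\text{Im}(\psi)$ equals the row space of $M$, and so $\dim \text{Im}(\psi) = \text{rank}(M)$. Part (iii) then follows by applying the rank-nullity theorem to $\psi$: using (i) and (ii),
\[
\dim {\mathcal Z}(\Phi) = \dim \text{Ker}(\psi) = 4 - \dim \text{Im}(\psi) = 4 - \text{rank}(M).
\]
There is no substantive obstacle here; the only thing to be careful about is the initial dimension count for the domain, which is why I would invoke Lemma \ref{lem:trid} at the start.
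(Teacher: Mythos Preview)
Your proof is correct and follows the same approach as the paper: part (i) via Lemma \ref{lem:cond}, and parts (ii)--(iii) via the row-space interpretation and rank--nullity. The paper's proof is just a terse ``By Lemma \ref{lem:cond}'' for (i) and ``By linear algebra'' for (ii), (iii); you have simply written out the details that the paper suppresses, including the well-definedness check using Lemma \ref{lem:trid}.
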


\begin{proof}
(i)
By Lemma \ref{lem:cond}.

(ii), (iii)
By linear algebra.
\end{proof}

\begin{lemma}   \label{lem:dimZpre}   \samepage
\ifDRAFT {\rm lem:dimZpre}. \fi
We have
\[
 0 \leq \dim {\mathcal Z}(\Phi) \leq 2.
\]
\end{lemma}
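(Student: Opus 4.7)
The plan is to deduce this directly from the two preceding lemmas. By Lemma \ref{lem:Impsi}(iii) we have the identity $\dim \mathcal{Z}(\Phi) = 4 - \text{rank}(M)$, and by Lemma \ref{lem:rankM} we have $\text{rank}(M) \geq 2$. Substituting the latter into the former gives the upper bound $\dim \mathcal{Z}(\Phi) \leq 2$, while the lower bound $0 \leq \dim \mathcal{Z}(\Phi)$ holds because $\mathcal{Z}(\Phi)$ is by definition a vector space.

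So there is essentially no obstacle here; the work has already been done in setting up the linear map $\psi$ in Definition \ref{def:psi} and observing in Lemma \ref{lem:Impsi} that its kernel is $\mathcal{Z}(\Phi)$ and its image has dimension equal to $\text{rank}(M)$. The nontrivial ingredient invoked in the background is that the top two rows of $M$ (namely the all-ones row and the row $(\th^*_0,\ldots,\th^*_d)$) are linearly independent because the dual eigenvalues $\{\th^*_i\}_{i=0}^d$ are mutually distinct, which is a standard feature of a Leonard system and was already used in the proof of Lemma \ref{lem:rankM}. No new computation is needed.
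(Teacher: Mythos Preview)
Your proof is correct and follows exactly the paper's approach: the paper's proof is simply ``By Lemmas \ref{lem:rankM} and \ref{lem:Impsi}(iii),'' which is precisely what you have written out in more detail.
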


\begin{proof}
By Lemmas \ref{lem:rankM} and  \ref{lem:Impsi}(iii).
\end{proof}

\begin{lemma}    \label{lem:condd}   \samepage
\ifDRAFT {\rm lem:condd}. \fi
The following are equivalent:
\begin{itemize}
\item[\rm (i)]
$\dim  {\mathcal Z}(\Phi) = 2$;
\item[\rm (ii)]
$\text{\rm rank}(M) = 2$;
\item[\rm (iii)]
$a_0 = a_1 = \cdots = a_d$.
\end{itemize}
\end{lemma}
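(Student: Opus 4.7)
The plan is to prove (i) $\Leftrightarrow$ (ii) immediately from the earlier lemmas, and then handle (ii) $\Leftrightarrow$ (iii) by a direct calculation with the rows of $M$.

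For (i) $\Leftrightarrow$ (ii): This follows at once from Lemma \ref{lem:Impsi}(iii), which gives $\dim {\mathcal Z}(\Phi) = 4 - \text{rank}(M)$, together with Lemma \ref{lem:rankM}, which says $\text{rank}(M) \geq 2$. So $\dim {\mathcal Z}(\Phi) = 2$ exactly when $\text{rank}(M) = 2$.

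For (iii) $\Rightarrow$ (ii): If $a_0 = a_1 = \cdots = a_d = a$, then the third row of $M$ equals $a$ times the first row, and the fourth row equals $a$ times the second row. So the row space of $M$ is spanned by the first two rows, which are already linearly independent (as in the proof of Lemma \ref{lem:rankM}). Hence $\text{rank}(M) = 2$.

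For (ii) $\Rightarrow$ (iii): Assume $\text{rank}(M) = 2$. Since the first two rows of $M$ are linearly independent, they span the row space of $M$, so rows three and four are $\F$-linear combinations of them. Thus there exist scalars $\alpha,\beta,\gamma,\delta$ with
\[
 a_i = \alpha + \beta \th^*_i, \qquad a_i \th^*_i = \gamma + \delta \th^*_i \qquad (0 \leq i \leq d).
\]
Substituting the first equation into the second yields
\[
 \beta (\th^*_i)^2 + (\alpha - \delta) \th^*_i - \gamma = 0 \qquad (0 \leq i \leq d).
\]
The polynomial $\beta x^2 + (\alpha - \delta) x - \gamma$ therefore vanishes at the $d+1 \geq 4$ mutually distinct scalars $\{\th^*_i\}_{i=0}^d$. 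Since a nonzero polynomial of degree at most $2$ has at most $2$ roots, we must have $\beta = 0$, $\alpha = \delta$, and $\gamma = 0$. Consequently $a_i = \alpha$ for $0 \leq i \leq d$, giving (iii).

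The main (mild) obstacle is (ii) $\Rightarrow$ (iii); the rest is bookkeeping. The key input there is the assumption $d \geq 3$, which guarantees enough distinct $\th^*_i$ to force a degree-$2$ polynomial with $\geq 4$ roots to be identically zero.
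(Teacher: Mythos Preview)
Your proof is correct and follows essentially the same approach as the paper: both use Lemma~\ref{lem:Impsi}(iii) for (i)~$\Leftrightarrow$~(ii), and for (ii)~$\Rightarrow$~(iii) both write rows three and four as combinations of the first two, obtain a degree-$2$ polynomial in $\th^*_i$ with at least $d+1\geq 4$ distinct roots, and conclude the coefficients vanish. Your (iii)~$\Rightarrow$~(ii) is spelled out slightly more than the paper's ``Clear,'' but the arguments are the same.
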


\begin{proof}
(i) $\Leftrightarrow$ (ii)
By Lemma \ref{lem:Impsi}(iii).

(ii) $\Rightarrow$ (iii)
The top two rows of $M$ are linearly independent.
So the 3rd row and the 4th row of $M$ are contained in the span of the
top two rows.
Thus there exist scalars $\alpha$, $\beta$, $\gamma$, $\delta$ such that
\begin{align*}
a_i &= \alpha + \beta\th^*_i   \qquad\qquad (0 \leq i \leq d),
\\
a_i \th^*_i &= \gamma + \delta \th^*_i   \qquad\qquad (0 \leq i \leq d).
\end{align*}
By these equations we get 
\[
 0 = \beta \th_i^{*2} + (\alpha-\delta) \th^*_i - \gamma
\qquad\qquad
(0 \leq i \leq d).
\]
We assume $d \geq 3$ and the scalars $\{\th^*_i\}_{i=0}^d$ are mutually distinct, so
$\beta = 0$, $\alpha= \delta$, $\gamma =0$.
It follows that $a_0 = a_1 = \cdots = a_d$.

(iii)  $\Rightarrow$ (ii)
Clear.
\end{proof}

\begin{lemma}   \label{lem:EB}   \samepage
\ifDRAFT {\rm lem:EB}. \fi
Assume that $\dim  {\mathcal Z}(\Phi) = 2$.
Then the scalars  $f_0,f_1,f_2,f_3$ satisfy \eqref{eq:Z} if and only if
\[
f_0 + f_2 a_0 = 0,
\qquad\qquad
f_1 + f_3 a_0 = 0.
\]
\end{lemma}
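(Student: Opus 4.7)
The plan is to reduce condition \eqref{eq:Z} to a linear polynomial identity in $\theta^*_i$ that must hold at all $d+1$ nodes, and then exploit that the $\theta^*_i$ are mutually distinct.

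First I would invoke Lemma \ref{lem:condd}, which tells us that the hypothesis $\dim \mathcal{Z}(\Phi) = 2$ forces $a_0 = a_1 = \cdots = a_d$. Substituting $a_i = a_0$ into \eqref{eq:Z} for each $i$, the condition becomes
\[
(f_0 + f_2 a_0) + (f_1 + f_3 a_0)\,\th^*_i = 0 \qquad (0 \leq i \leq d).
\]
This is a linear equation in the unknown $\th^*_i$ whose coefficients depend only on $f_0,f_1,f_2,f_3,a_0$.

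For the forward direction, since $d \geq 3$ and the scalars $\{\th^*_i\}_{i=0}^d$ are mutually distinct, the displayed linear expression vanishes at more than one value, so both coefficients must be zero; this gives $f_0 + f_2 a_0 = 0$ and $f_1 + f_3 a_0 = 0$. For the reverse direction, if these two equations hold, then substituting back shows that the displayed condition holds for every $i$, hence \eqref{eq:Z} holds.

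There is no real obstacle here; the only subtlety is the appeal to Lemma \ref{lem:condd} to translate the dimension hypothesis into the uniformity $a_i = a_0$, and the use of $d \geq 3$ (indeed $d \geq 1$ suffices) to conclude that a linear polynomial vanishing at all $\th^*_i$ has both coefficients zero.
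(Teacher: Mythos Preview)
Your proof is correct and follows essentially the same route as the paper: invoke Lemma~\ref{lem:condd} to reduce $a_i$ to the constant $a_0$, then read \eqref{eq:Z} as a linear relation in the distinct $\th^*_i$ forcing both coefficients to vanish. The paper's one-line proof cites Lemmas~\ref{lem:cond} and~\ref{lem:condd}, but your explicit write-up shows that Lemma~\ref{lem:condd} alone carries the argument.
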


\begin{proof}
By Lemmas \ref{lem:cond} and \ref{lem:condd}.
\end{proof}

\begin{lemma}   \label{lem:EB2}   \samepage
\ifDRAFT {\rm lem:EB2}. \fi
Assume that $\dim  {\mathcal Z}(\Phi) = 2$.
Then the following elements form a basis for
${\mathcal Z}(\Phi)$:
\[
    A - a_0 I,
\qquad \qquad
A A^* - a_0 A^*.  
\]
\end{lemma}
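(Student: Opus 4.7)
The plan is to verify that each of the two displayed elements lies in $\mathcal{Z}(\Phi)$, then check that they are linearly independent, and invoke the hypothesis $\dim \mathcal{Z}(\Phi) = 2$ to conclude that they span.

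First I would show containment using Lemma \ref{lem:EB}. Write $A - a_0 I = f_0 I + f_1 A^* + f_2 A + f_3 A A^*$ with $(f_0,f_1,f_2,f_3) = (-a_0, 0, 1, 0)$; then $f_0 + f_2 a_0 = -a_0 + a_0 = 0$ and $f_1 + f_3 a_0 = 0$, so by Lemma \ref{lem:EB} (which applies because $\dim \mathcal{Z}(\Phi) = 2$) we have $A - a_0 I \in \mathcal{Z}(\Phi)$. Similarly $A A^* - a_0 A^*$ corresponds to $(f_0,f_1,f_2,f_3) = (0, -a_0, 0, 1)$, and again both conditions of Lemma \ref{lem:EB} are met.

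Next I would establish linear independence. Suppose $\alpha(A - a_0 I) + \beta(A A^* - a_0 A^*) = 0$ for scalars $\alpha, \beta$. Rearranging gives
\[
 -\alpha a_0 \, I \;-\; \beta a_0 \, A^* \;+\; \alpha \, A \;+\; \beta \, A A^* \;=\; 0.
\]
By Lemma \ref{lem:trid} the elements $I, A^*, A, A A^*, A^* A$ are linearly independent, so in particular $I, A^*, A, A A^*$ are linearly independent. Hence $\alpha = 0$ and $\beta = 0$.

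Finally, since $A - a_0 I$ and $A A^* - a_0 A^*$ are two linearly independent elements of $\mathcal{Z}(\Phi)$ and $\dim \mathcal{Z}(\Phi) = 2$ by assumption, they form a basis. There is no genuine obstacle here; the statement is essentially a direct corollary of Lemma \ref{lem:EB} together with the basis property from Lemma \ref{lem:trid}, and the only thing to double-check is that the two coefficient vectors one writes down are not proportional, which is immediate from their shapes.
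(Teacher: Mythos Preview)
Your proof is correct and follows essentially the same route as the paper, which simply cites Lemmas~\ref{lem:cond} and~\ref{lem:EB}. Your explicit appeal to Lemma~\ref{lem:trid} for the linear independence of $I, A^*, A, AA^*$ is a welcome clarification, since that independence is what makes the correspondence in Lemma~\ref{lem:cond} injective and hence is implicitly needed in the paper's argument as well.
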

 
\begin{proof}
By Lemmas \ref{lem:cond} and \ref{lem:EB}.
\end{proof}

\section{ The scalars $a^-_i$ and $a^+_i$}
\label{sec:soleqZ}
\ifDRAFT {\rm sec:soleqZ}. \fi

We continue to discuss the Leonard system $\Phi$ from \eqref{eq:Phi}.
Recall the zero diagonal space ${\mathcal Z}(\Phi)$ from Definition \ref{def:Z}.
In this section,
we obtain a necessary and sufficient condition for ${\mathcal Z}(\Phi) \neq 0$.

\begin{defi}   \label{def:amap}   \samepage
\ifDRAFT {\rm def:amap}. \fi
For $0 \leq i \leq d$ define
\begin{align*}
a^-_i &= (a_i - a_0)(\th^*_i - \th^*_d),
\qquad\qquad
a^+_i  = (a_i -a_d)(\th^*_i - \th^*_0).
\end{align*}
\end{defi}

\begin{lemma}   \label{lem:bemu}   \samepage
\ifDRAFT {\rm lem:bemu}. \fi
We have
$a^-_0 = a^-_d = a^+_0 = a^+_d = 0$.
\end{lemma}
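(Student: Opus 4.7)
The plan is to prove the four equalities $a^-_0 = a^-_d = a^+_0 = a^+_d = 0$ by direct substitution into Definition \ref{def:amap}, since each quantity is a product that manifestly contains a vanishing factor. No auxiliary results are needed beyond the definitions themselves.

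Specifically, to show $a^-_0 = 0$, I would substitute $i=0$ into $a^-_i = (a_i - a_0)(\th^*_i - \th^*_d)$; the first factor $a_0 - a_0$ is zero. To show $a^-_d = 0$, I would substitute $i=d$; the second factor $\th^*_d - \th^*_d$ is zero. The two identities $a^+_0 = 0$ and $a^+_d = 0$ are handled symmetrically: substitute $i=0$ into $a^+_i = (a_i - a_d)(\th^*_i - \th^*_0)$ to kill the second factor, and substitute $i=d$ to kill the first factor.

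There is no genuine obstacle here; the lemma is a bookkeeping fact recording the vanishing of $a^\pm_i$ at the endpoints $i=0$ and $i=d$. Its purpose, presumably, is to set up subsequent arguments in which $\{a^-_i\}_{i=1}^{d-1}$ and $\{a^+_i\}_{i=1}^{d-1}$ are the effective degrees of freedom, so one needs this endpoint vanishing on record before proceeding. The proof itself is a one-line chain of equalities for each of the four assertions.
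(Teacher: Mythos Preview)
Your proposal is correct and matches the paper's approach exactly: the paper's proof is the single line ``By Definition \ref{def:amap},'' which is precisely the direct substitution you describe.
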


\begin{proof}
By Definition \ref{def:amap}.
\end{proof}

\begin{defi}  \label{def:L}    \samepage
\ifDRAFT {\rm def:L}. \fi
Define a matrix $L = L(\Phi)$ of size $4 \times (d+1)$  by
\[
L =
\begin{pmatrix}
1 & 1 & \cdots & 1  
\\
\th^*_0 & \th^*_1 & \cdots & \th^*_d
\\
a^-_0 & a^-_1 & \cdots & a^-_d
\\
a^+_0 & a^+_1 & \cdots & a^+_d
\end{pmatrix}.
\]
\end{defi}

\begin{defi}  \label{def:T}    \samepage
\ifDRAFT {\rm def:T}. \fi
Define a matrix $T$  by
\[
T =
\begin{pmatrix}
1 & 0 & 0 & 0
\\
0 & 1 & 0 & 0
\\
a_0 \th^*_d & - a_0  & -\th^*_d& 1
\\
a_d \th^*_0  & - a_d & - \th^*_0& 1
\end{pmatrix}.
\]
\end{defi}

\begin{lemma}   \label{lem:LTM}    \samepage
\ifDRAFT {\rm lem:LTM}. \fi
We have
\[
  L = T M.
\]
\end{lemma}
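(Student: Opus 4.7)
The plan is to verify the identity $L = TM$ by direct computation, comparing entries column by column. Since $L$ and $TM$ are both $4 \times (d+1)$ matrices, it suffices to check that for each $0 \le i \le d$ the $i$-th column of $TM$ agrees with the $i$-th column of $L$, and the computation splits naturally into four rows.

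First I would dispatch the top two rows. The first two rows of $T$ are the standard basis vectors $(1,0,0,0)$ and $(0,1,0,0)$, so the first two rows of $TM$ are literally the first two rows of $M$, namely $(1,1,\dots,1)$ and $(\th^*_0,\th^*_1,\dots,\th^*_d)$. These agree with the first two rows of $L$ by Definition \ref{def:L}.

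Next I would handle rows 3 and 4, which is where the actual content lies. The $(3,i)$-entry of $TM$ is the dot product of the third row $(a_0\th^*_d,\,-a_0,\,-\th^*_d,\,1)$ of $T$ with the $i$-th column $(1,\th^*_i,a_i,a_i\th^*_i)^{\top}$ of $M$, which equals
\[
a_0\th^*_d - a_0\th^*_i - \th^*_d a_i + a_i \th^*_i
= a_0(\th^*_d-\th^*_i) + a_i(\th^*_i-\th^*_d)
= (a_i-a_0)(\th^*_i-\th^*_d) = a^-_i
\]
by Definition \ref{def:amap}. An entirely analogous computation for the fourth row, using $(a_d\th^*_0,\,-a_d,\,-\th^*_0,\,1)$, yields $(a_i-a_d)(\th^*_i-\th^*_0)=a^+_i$. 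Comparing with Definition \ref{def:L} finishes the verification.

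There is no real obstacle here; the lemma is essentially a bookkeeping statement recording how the scalars $a^-_i$, $a^+_i$ from Definition \ref{def:amap} arise from the rows of $M$ via a fixed invertible-on-the-last-two-coordinates transformation. The only care needed is tracking signs correctly when factoring $a_0(\th^*_d-\th^*_i) + a_i(\th^*_i-\th^*_d)$ as $(a_i-a_0)(\th^*_i-\th^*_d)$, and similarly for the fourth row.
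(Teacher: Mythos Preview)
Your proof is correct and is precisely the routine verification the paper alludes to; the paper's own proof consists of the single sentence ``Routine verification.'' There is nothing to add.
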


\begin{proof}
Routine verification.
\end{proof}

\begin{lemma}    \label{lem:detT}   \samepage
\ifDRAFT {\rm lem:detT}. \fi
The matrix $T$ is invertible.
\end{lemma}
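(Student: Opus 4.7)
The plan is to show $T$ is invertible by computing $\det(T)$ directly and exhibiting it as a nonzero quantity involving the distinct dual eigenvalues. Since the top-left $2\times 2$ block of $T$ is the identity and the top-right $2\times 2$ block is zero, I would exploit this near-block-triangular structure via row operations to clear the bottom-left $2\times 2$ block.

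Concretely, first I would replace row $3$ by $\text{row }3 - a_0\th^*_d \cdot \text{row }1 + a_0 \cdot \text{row }2$, which yields $(0,0,-\th^*_d,1)$. Then I would replace row $4$ by $\text{row }4 - a_d\th^*_0 \cdot \text{row }1 + a_d \cdot \text{row }2$, which yields $(0,0,-\th^*_0,1)$. After these elementary row operations (which preserve the determinant), $T$ becomes block upper triangular with top-left block the $2\times 2$ identity and bottom-right block
\[
\begin{pmatrix} -\th^*_d & 1 \\ -\th^*_0 & 1 \end{pmatrix}.
\]
Consequently $\det(T) = \th^*_0 - \th^*_d$.

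Finally, since $\{\th^*_i\}_{i=0}^d$ are mutually distinct (these being the eigenvalues of the multiplicity-free element $A^*$, by Lemma \ref{lem:mfree}), we have $\th^*_0 \neq \th^*_d$, hence $\det(T) \neq 0$ and $T$ is invertible. There is no real obstacle here; the only thing to be careful about is the sign bookkeeping in the row operations, but this is purely mechanical.
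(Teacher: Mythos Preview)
Your proof is correct and takes essentially the same approach as the paper: compute $\det(T) = \th^*_0 - \th^*_d$ and observe this is nonzero since the dual eigenvalues are mutually distinct. The paper simply asserts the value of the determinant without showing the intermediate row-reduction steps you supply.
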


\begin{proof}
We have 
$ \det T = \th^*_0 - \th^*_d \neq 0$.
\end{proof}

\begin{corollary}   \label{cor:ML}    \samepage
\ifDRAFT {\rm cor:ML}. \fi
The matrices $M$ and $L$ have the same rank.
\end{corollary}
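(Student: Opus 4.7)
The plan is to combine Lemma \ref{lem:LTM} with Lemma \ref{lem:detT} via the standard linear-algebra fact that left multiplication by an invertible matrix preserves rank. Concretely, from Lemma \ref{lem:LTM} we have $L = TM$, and from Lemma \ref{lem:detT} the $4 \times 4$ matrix $T$ is invertible. Therefore multiplication by $T$ on the left gives a bijection from the column space of $M$ to the column space of $L$, so the two matrices have equal column rank.

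Equivalently, since $T$ is invertible, $M = T^{-1} L$, and any linear dependence among the columns of $M$ corresponds under $T$ to a linear dependence among the columns of $L$, and vice versa. Thus $\text{rank}(M) = \text{rank}(L)$.

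There is no real obstacle here; the content is entirely in the preceding two lemmas, and the corollary is just the observation that invertible row operations do not change rank.
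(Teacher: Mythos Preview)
Your proof is correct and follows the same approach as the paper: the paper's proof simply cites Lemma \ref{lem:LTM} and linear algebra, and you have spelled out explicitly that the invertibility of $T$ (Lemma \ref{lem:detT}) is what makes the rank-preservation argument work.
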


\begin{proof}
By Lemma \ref{lem:LTM} and linear algebra.
\end{proof}
 
\begin{prop}    \label{prop:cond}   \samepage
\ifDRAFT {\rm prop:cond}. \fi
The following are equivalent:
\begin{itemize}
\item[\rm (i)]
${\mathcal Z}(\Phi) \neq 0$;
\item[\rm (ii)]
$\text{\rm rank}(L) \leq 3$;
\item[\rm (iii)]
the vectors
$(a^-_0, a^-_1, \ldots, a^-_d)$
and
$(a^+_0, a^+_1, \ldots, a^+_d)$
are linearly dependent;
\item[\rm (iv)]
$a^-_i a^+_j = a^+_i a^-_j$ 
for $0 \leq i,j \leq d$;
\item[\rm (v)]
the vectors
$(a^-_1, a^-_ 2, \ldots, a^-_{d-1})$
and
$(a^+_1, a^+_1, \ldots, a^+_{d-1})$
are linearly dependent;
\item[\rm (vi)]
$a^-_i a^+_j = a^+_i a^-_j$ 
for $1 \leq i,j \leq d-1$.
\end{itemize}
\end{prop}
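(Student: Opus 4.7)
The strategy is to prove the chain
\[
\text{(i)} \;\Longleftrightarrow\; \text{(ii)} \;\Longleftrightarrow\; \text{(iii)} \;\Longleftrightarrow\; \text{(iv)},
\qquad
\text{(iii)} \;\Longleftrightarrow\; \text{(v)} \;\Longleftrightarrow\; \text{(vi)},
\]
using in an essential way the vanishing at the endpoints provided by Lemma \ref{lem:bemu}.

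For (i) $\Leftrightarrow$ (ii), I would combine Lemma \ref{lem:Impsi}(iii), which gives $\dim {\mathcal Z}(\Phi) = 4 - \text{\rm rank}(M)$, with Corollary \ref{cor:ML}, which says $\text{\rm rank}(L) = \text{\rm rank}(M)$. Thus ${\mathcal Z}(\Phi) \neq 0$ if and only if $\text{\rm rank}(L) \leq 3$.

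For (ii) $\Leftrightarrow$ (iii), note that $\text{\rm rank}(L) \leq 3$ means the four rows of $L$ are linearly dependent, i.e., there exist scalars $c_0, c_1, c_2, c_3$ not all zero with
\[
c_0 + c_1 \th^*_i + c_2 a^-_i + c_3 a^+_i = 0 \qquad (0 \leq i \leq d).
\]
Specializing to $i = 0$ and $i = d$ and using Lemma \ref{lem:bemu} (so that $a^\pm_0 = a^\pm_d = 0$) yields $c_0 + c_1 \th^*_0 = 0$ and $c_0 + c_1 \th^*_d = 0$; since $\th^*_0 \neq \th^*_d$ this forces $c_0 = c_1 = 0$. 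Hence the dependence reduces to $c_2 a^-_i + c_3 a^+_i = 0$ for $0 \leq i \leq d$ with $(c_2, c_3) \neq (0,0)$, which is exactly (iii). Conversely, any such dependence among the bottom two rows gives a dependence among all four rows.

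For (iii) $\Leftrightarrow$ (iv), this is the standard linear algebra fact that two vectors in $\F^{d+1}$ are linearly dependent if and only if every $2 \times 2$ minor of the matrix with these vectors as rows vanishes; the minors are precisely $a^-_i a^+_j - a^+_i a^-_j$. For (iii) $\Leftrightarrow$ (v) and (v) $\Leftrightarrow$ (vi), I would use Lemma \ref{lem:bemu} again: since both coordinates $i = 0$ and $i = d$ are zero for each of the vectors $(a^-_i)$ and $(a^+_i)$, a linear dependence on the full range $0 \leq i \leq d$ is equivalent to a linear dependence on the restricted range $1 \leq i \leq d-1$, and then (v) $\Leftrightarrow$ (vi) is again the minor criterion (note $d \geq 3$ so the restricted vectors have length at least~$2$).

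The only subtlety is a degenerate case of (v) and (vi): if the restricted vectors are both identically zero, they are trivially linearly dependent and all minors trivially vanish; but in that case the full vectors are also zero by Lemma \ref{lem:bemu}, so (iii), (iv), (v), (vi) all hold simultaneously, and the equivalences are preserved. No single step presents a real obstacle; the only point requiring care is this endpoint bookkeeping via Lemma \ref{lem:bemu}.
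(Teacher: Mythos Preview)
Your proposal is correct and follows essentially the same route as the paper's own proof: the equivalence (i)\,$\Leftrightarrow$\,(ii) via Lemma~\ref{lem:Impsi}(iii) and Corollary~\ref{cor:ML}, the reduction (ii)\,$\Rightarrow$\,(iii) by specializing a row dependence of $L$ at $i=0,d$ and using $a^\pm_0=a^\pm_d=0$ to kill the first two coefficients, and the remaining equivalences by the $2\times 2$ minor criterion together with the endpoint vanishing from Lemma~\ref{lem:bemu}. Your extra remark on the degenerate case where both restricted vectors vanish is harmless and not needed, but does no damage.
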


\begin{proof}
(i) $\Leftrightarrow$ (ii)
By Lemma \ref{lem:Impsi}(iii) and Corollary \ref{cor:ML}.

(ii) $\Rightarrow$ (iii)
The rows of $L$ are linearly dependent.
Therefore there exist scalars $f_0, f_1, f_2, f_3$ (not all zero)
such that
\[
f_0 + f_1 \th^*_i + f_2 a^-_i + f_3 a^+_i = 0
\qquad\qquad (0 \leq i \leq d).
\]
By this and since $a^-_0 = a^-_d = a^+_0 = a^+_d=0$,
\[
 f_0 + f_1 \th^*_0 = 0,
\qquad\qquad
 f_0 + f_1 \th^*_d = 0.
\]
We have $\th^*_0 \neq \th^*_d$,
so $f_0 = f_1 = 0$.
Observe that $f_2 \neq 0$ or $f_3 \neq 0$.
Moreover
\[
f_2 a^-_i + f_3 a^+_i = 0 \qquad\qquad (0 \leq i \leq d).
\]
This gives the linear dependency
\[
 f_2 (a^-_0, a^-_1, \ldots, a^-_d) + f_3 (a^+_0, a^+_1, \ldots, a^+_d) = 0.
\]

(iii) $\Rightarrow$ (ii)
By linear algebra.

(iii) $\Leftrightarrow$ (iv)
By linear algebra.

(iii) $\Leftrightarrow$ (v)
Since  $a^-_0 = a^-_d = a^+_0 = a^+_d=0$.

(iv) $\Leftrightarrow$ (vi)
Since $a^-_0 =a^-_d = a^+_0 = a^+_d = 0$.
\end{proof}

\section{The type of  a Leonard system}
\label{sec:types}
\ifDRAFT {\rm sec:types}. \fi

We continue to discuss the Leonard system $\Phi$ from \eqref{eq:Phi}.
By \cite[Theorem  5.16]{T:parray}, 
$\Phi$ has one of the following types:

\begin{tabular}{lllll}
$q$-Racah,
&
$q$-Hahn,
&
dual $q$-Hahn,
&
quantum $q$-Krawtchouk,
\\
$q$-Krawtchouk,
&
affine $q$-Krawtchouk,
&
dual $q$-Krawtchouk,
\\
Racah,
&
Hahn,
&
dual Hahn,
&
Krawtchouk,
\\
Bannai/Ito,
&
Orphan.
\end{tabular}

\vspace{1ex}

\noindent
In this section, we describe each type in detail.

\begin{defi}   
{\rm (See \cite[Example 5.3]{T:parray}.) }
\label{def:qRacah}   \samepage
\ifDRAFT {\rm def:qRacah}. \fi
The Leonard system
$\Phi$ is said to have  {\em $q$-Racah type}
whenever 
there exist scalars $q, h, h^*, s, s^*, r_1, r_2$ such that 
\begin{itemize}
\item[\rm (i)]
each of $q, h, h^*,s, s^*, r_1, r_2$ is nonzero;
\item[\rm (ii)] 
$r_1 r_2 = s s^* q^{d+1}$;
\item[\rm (iii)]
none of $q^i, r_1 q^i, r_2 q^i, s^* q^i/r_1, s^* q^i /r_2$ is equal to $1$ for $1 \leq i \leq d$;
\item[(iv)]
$s q^i \neq 1$, $s^* q^i \neq 1$ for $2 \leq i \leq 2d$;
\item[\rm (v)]
for $0 \leq i \leq d$,
\begin{align*}
\th_i &= \th_0 + h(1-q^i)(1- s q^{i+1}) q^{-i},
\\
\th^*_i &= \th^*_0 + h^*(1-q^i)(1- s^* q^{i+1}) q^{-i};
\end{align*}
\item[\rm (vi)]
for $1 \leq i  \leq d$,
\begin{align*}
\vphi_i &= h h^* q^{1-2i} (1-q^i)(1-q^{i-d-1}) (1- r_1 q^i)(1-r_2 q^i),
\\
\phi_i &= h h^* q^{1-2i} (1-q^i)(1-q^{i-d-1}) (r_1- s^* q^i)(r_2 - s^* q^i)/s^*.
\end{align*}
\end{itemize}
\end{defi}

\begin{defi}  {\rm (See \cite[Example 5.4]{T:parray}.) }
\label{def:qHahn}  \samepage
\ifDRAFT {\rm def:qHahn}. \fi
The Leonard system
$\Phi$ is said to have  {\em $q$-Hahn type}
whenever 
there exist scalars $q, h, h^*, s^*, r$ such that 
\begin{itemize}
\item[\rm (i)]
each of $q, h, h^*,  s^*, r$ is nonzero;
\item[\rm (ii)]
none of $q^i, r q^i, s^* q^i/r$ is equal to $1$ for $1 \leq i \leq d$;
\item[\rm (iii)]
$ s^* q^i \neq 1$ for $2 \leq i \leq 2d$;
\item[\rm (iv)]
for $0 \leq i \leq d$,
\begin{align*}
\theta_i &= \theta_0+h(1-q^i)q^{-i}, 
\\
\theta^*_i &=  \theta^*_0+h^*(1-q^i)(1-s^*q^{i+1})q^{-i};
\end{align*}
\item[\rm (v)]
for $1 \leq i \leq d$,
\begin{align*}
\varphi_i &= h h^*q^{1-2i}(1-q^i)(1-q^{i-d-1})(1-rq^i),
\\
\phi_i &=- h h^*q^{1-i}(1-q^i)(1-q^{i-d-1})(r-s^*q^i).
\end{align*}
\end{itemize}
\end{defi}

\begin{defi}  {\rm  (See \cite[Example 5.5]{T:parray}.) }
\label{def:dualqHahn}     \samepage
\ifDRAFT {\rm def:dualqHahn}. \fi
The Leonard system
$\Phi$ is said to have  {\em dual  $q$-Hahn type}
whenever 
there exist scalars $q, h, h^*, s, r$ such that 
\begin{itemize}
\item[\rm (i)]
each of $q, h, h^*, s, r$ is nonzero;
\item[\rm (ii)]
none of $q^i, r q^i, s q^i/r$ is equal to $1$ for $1 \leq i \leq d$;
\item[\rm (iii)]
$s q^i \neq 1$ for $2 \leq i \leq 2d$;
\item[\rm (iv)]
for $0 \leq i \leq d$,
\begin{align*}
\theta_i &= \theta_0+h(1-q^i)(1-sq^{i+1})q^{-i},
\\
\theta^*_i &=\theta^*_0+h^*(1-q^i)q^{-i};
\end{align*}
\item[\rm (v)]
for $1 \leq i \leq d$,
\begin{align*}
\varphi_i &= hh^*q^{1-2i}(1-q^i)(1-q^{i-d-1})(1-rq^i),
\\
\phi_i &= hh^*q^{d+2-2i}(1-q^i)(1-q^{i-d-1})(s-rq^{i-d-1}).
\end{align*}
\end{itemize}
\end{defi}

\begin{defi}   {\rm  (See \cite[Example 5.6]{T:parray}.) }
\label{def:quantumqKrawt}    \samepage
\ifDRAFT {\rm def:quantumqKrawt}. \fi
The Leonard system
$\Phi$ is said to have  {\em quantum  $q$-Krawtchouk type}
whenever 
there exist scalars $q, h^*, s, r$ such that 
\begin{itemize}
\item[\rm (i)]
each of $q, h^*, s,r$  is nonzero;
\item[\rm (ii)]
$q^i \neq 1$, $s q^i/r \neq 1$ for $1 \leq i \leq d$;
\item[\rm (iii)]
for $0 \leq i \leq d$,
\begin{align*}
\theta_i &= \theta_0-sq(1-q^i),
\\
\theta^*_i &= \theta^*_0+h^*(1-q^i)q^{-i};
\end{align*}
\item[\rm (iv)]
for  $1 \leq i \leq d$,
\begin{align*}
\varphi_i &= -rh^*q^{1-i}(1-q^i)(1-q^{i-d-1}),
\\
\phi_i &= h^*q^{d+2-2i}(1-q^i)(1-q^{i-d-1})(s-rq^{i-d-1}).
\end{align*}
\end{itemize}
\end{defi}

\begin{defi}              {\rm  (See \cite[Example 5.7]{T:parray}.) }
\label{def:qKrawt}    \samepage
\ifDRAFT {\rm def:qKrawt}. \fi
The Leonard system
$\Phi$ is said to have  {\em  $q$-Krawtchouk type}
whenever 
there exist scalars $q, h, h^*, s^*$ such that 
\begin{itemize}
\item[\rm (i)]
each of $q,h, h^*, s^*$ is nonzero;
\item[\rm (ii)]
$q^i \neq 1$ for $1 \leq i \leq d$;
\item[\rm (iii)]
 $s^* q^i  \neq 1$
for $2 \leq i \leq 2d$;
\item[\rm(iv)]
for $0 \le i \leq d$,
\begin{align*}
\theta_i &= 
\theta_0+h(1-q^i)q^{-i},
\\
\theta^*_i &=  \theta^*_0+h^*(1-q^i)(1-s^*q^{i+1})q^{-i};
\end{align*}
\item[\rm (v)]
for $1 \leq i \leq d$,
\begin{align*}
\varphi_i &= h h^*q^{1-2i}(1-q^i)(1-q^{i-d-1}),
\\
\phi_i &= hh^*s^*q(1-q^i)(1-q^{i-d-1}).
\end{align*}
\end{itemize}
\end{defi}

\begin{defi}          {\rm  (See \cite[Example 5.8]{T:parray}.) }
\label{ded:affineqKrawt}   \samepage
\ifDRAFT {\rm def:affineqKrawt}. \fi
The Leonard system
$\Phi$ is said to have  {\em affine  $q$-Krawtchouk type}
whenever 
there exist scalars $q, h, h^*, r$ such that 
\begin{itemize}
\item[\rm (i)]
each of $q, h, h^*, r$ is nonzero;
\item[\rm (ii)]
$q^i \neq 1$, $r q^i \neq 1$ for $1 \leq i \leq d$;
\item[\rm (iii)]
for $0 \leq i \leq d$,
\begin{align*}
\theta_i &= \theta_0+h(1-q^i)q^{-i},
\\
\theta^*_i &= \theta^*_0+h^*(1-q^i)q^{-i};
\end{align*}
\item[\rm (iv)]
for 
 $1 \leq i \leq d$,
\begin{align*}
\varphi_i &= h h^*q^{1-2i}(1-q^i)(1-q^{i-d-1})(1-r q^i),
\\
\phi_i &= -h h^*r q^{1-i}(1-q^i)(1-q^{i-d-1}).
\end{align*}
\end{itemize}
\end{defi}

\begin{defi}        {\rm  (See \cite[Example 5.9]{T:parray}.) }
\label{def;dualqKrawt}  \samepage
\ifDRAFT {\rm de:dualqKrawt}. \fi
The Leonard system
$\Phi$ is said to have  {\em dual   $q$-Krawtchouk type}
whenever 
there exist scalars $q, h, h^*, s$ such that 
\begin{itemize}
\item[\rm (i)]
each of $q,h,h^*, s$ is nonzero;
\item[\rm (ii)]
$q^i \neq 1$ for $1 \leq i \leq d$;
\item[\rm (iii)]
$s q^i \neq 1$ for $2 \leq i \leq 2d$;
\item[\rm (iv)]
for $0 \leq i \leq d$,
\begin{align*}
\theta_i &= \theta_0+h(1-q^i)(1-sq^{i+1})q^{-i},
\\
\theta^*_i &= \theta^*_0+h^*(1-q^i)q^{-i};
\end{align*}
\item[\rm (v)]
for $1 \leq i \leq d$,
\begin{align*}
\varphi_i &= hh^*q^{1-2i}(1-q^i)(1-q^{i-d-1}),
\\
\phi_i &= hh^*sq^{d+2-2i}(1-q^i)(1-q^{i-d-1}).
\end{align*}
\end{itemize}
\end{defi}

\begin{defi}   {\rm (See \cite[Example 5.10]{T:parray}.) }
\label{def:Racah}   \samepage
\ifDRAFT {\rm  def:Racah}. \fi
The Leonard system
$\Phi$ is said to have {\em Racah type} whenever
there exist scalars $h, h^*, s, s^*, r_1, r_2$ such that 
\begin{itemize}
\item[\rm (i)]
each of $h, h^*$ is nonzero;
\item[\rm (ii)]
$r_1 + r_2 = s + s^* + d + 1$;
\item[\rm (iii)]
the characteristic of $\F$ is $0$ or a prime greater than $d$;
\item[\rm (iv)]
none of $r_1, r_2, s^* - r_1, s^* - r_2$ is equal to $-i$ for $1 \leq i \leq d$;
\item[\rm (v)]
$s \neq -i$, $s^* \neq -i$ for $2 \leq i \leq 2d$;
\item[\rm (vi)]
for $0 \leq i \leq d$,
\begin{align*}
\th_i &= \th_0 + h  i (i+1+ s),
\\
\th^*_i &= \th^*_0 + h^* i (i+1+ s^*);
\end{align*}
\item[\rm (vii)]
for $1 \leq i  \leq d$,
\begin{align*}
\vphi_i &= h h^* i (i-d-1) (i+r_1)(i+r_2),
\\
\phi_i &= h h^* i (i-d-1)(i+s^* - r_1)(i+s^* - r_2).
\end{align*}
\end{itemize}
\end{defi}

\begin{defi}   {\rm (See \cite[Example 5.11]{T:parray}.) }
\label{def:Hahn}  \samepage
\ifDRAFT {\rm def:Hahn}. \fi
The Leonard system
$\Phi$ is said to have {\em Hahn type} whenever
there exist scalars $h^*,  s, s^*, r$ such that 
\begin{itemize}
\item[\rm (i)]
each of $h^*, s$ is nonzero;
\item[\rm (ii)]
the characteristic of $\F$ is $0$ or a prime greater than $d$;
\item[\rm (iii)]
neither of $r, s^* - r$ is equal to $-i$ for $1 \leq i \leq d$;
\item[\rm (iv)]
$s^* \neq -i$ for $2 \leq i \leq 2d$;
\item[\rm (v)]  
for $0 \leq i \leq d$,
\begin{align*}
\theta_i &= \theta_0+s i,
\\
\theta^*_i &= \theta^*_0+h^*i(i+1+s^*);
\end{align*}
\item[\rm (vi)]
for $1 \leq i \leq d$,
\begin{align*}
\varphi_i &= h^*si(i-d-1)(i+r),
\\
\phi_i &= -h^*si(i-d-1)(i+s^*-r).
\end{align*}
\end{itemize}
\end{defi}

\begin{defi}    {\rm (See \cite[Example 5.12]{T:parray}.) }
\label{def:dualHahn}    \samepage
\ifDRAFT {\rm def:dualHahn}. \fi
The Leonard system
$\Phi$ is said to have {\em dual Hahn type} whenever
there exist scalars $h,  s, s^*, r$ such that
\begin{itemize}
\item[\rm (i)]
each of $h, s^*$  is nonzero;
\item[\rm (ii)]
the characteristic of $\F$ is $0$ or a prime greater than $d$;
\item[\rm (iii)]
neither of $ r, s-r$ is equal to $-i$ for $1 \leq i \leq d$;
\item[\rm (iv)]
$s \neq -i$ for $2 \leq i \leq 2d$;
\item[\rm (v)]
for $0 \leq i \leq d$,
\begin{align*}
\theta_i &= \theta_0+hi(i+1+s),
\\
\theta^*_i &= \theta^*_0+s^*i;
\end{align*}
\item[\rm (vi)]
for $1 \leq i \leq d$,
\begin{align*}
\varphi_i &= hs^*i(i-d-1)(i+r),
\\
\phi_i &= hs^*i(i-d-1)(i+r-s-d-1).
\end{align*}
\end{itemize}
\end{defi}

\begin{defi}   {\rm (See \cite[Example 5.13]{T:parray}.) }
\label{def:Krawt} \samepage
\ifDRAFT {\rm def:Krawt}. \fi
The Leonard system
$\Phi$ is said to have {\em Krawtchouk type} whenever
there exist scalars $s, s^*, r$ such that
\begin{itemize}
\item[\rm (i)]
each of $s,s^*, r$ is nonzero;
\item[(ii)]
the characteristic of $\F$ is $0$ or a prime greater than $d$;
\item[\rm (iii)]
$r \neq s s^*$;
\item[\rm (iv)]
for $0 \leq i \leq d$,
\begin{align*}
\th_i &= \th_0 + s i,
\\
\th^*_i &= \th^*_0 +s^* i;
\end{align*}
\item[\rm (v)]
for $1 \leq i  \leq d$,
\begin{align*}
\vphi_i &=  r i (i-d-1),
\\
\phi_i &= (r-s s^*) i (i-d-1).
\end{align*}
\end{itemize}
\end{defi}

\begin{defi}   {\rm (See \cite[Example 5.14]{T:parray}.) }
\label{def:BI} \samepage
\ifDRAFT {\rm def:BI}. \fi
The Leonard system
$\Phi$ is said to have {\em Bannai/Ito type} whenever
there exist scalars $h, h^*, s, s^*, r_1, r_2$ such that
\begin{itemize}
\item[\rm (i)]
each of $h, h^*$ is nonzero;
\item[\rm (ii)]
the characteristic of $\F$ is $0$ or an odd prime greater than $d/2$;
\item[\rm (iii)]
$r_1 + r_2 = -s - s^* +d+1$;
\item[\rm (iv)]
neither of $r_1$, $-s^* - r_1$ is equal to $-i$  for $1 \leq i \leq d$, $d-i$   even;
\item[\rm (v)]
neither of $r_2$, $-s^* - r_2$ is equal to $-i$  for $1 \leq i \leq d$,  $i$  odd;
\item[\rm (vi)]
neither of $s$, $s^*$ is equal to $2i$ for $1 \leq i \leq d$;
\item[\rm (vii)]
for $0 \leq i  \leq d$,
\begin{align*}
\th_i &= \th_0 + h \big( s-1+(1-s+2i) (-1)^{i} \big),
\\
\th^*_i &= \th^*_0 + h^* \big( s^* - 1 + (1 - s^* + 2i) (-1)^{i} \big);
\end{align*}
\item[\rm (viii)]
for $1 \leq i  \leq d$,
\begin{align*}
\vphi_i &= 
\begin{cases}
- 4 h h^* i (i+r_1) &  \text{ if $i$ even, $d$ even},
\\
- 4 h h^* (i-d-1)(i+r_2) &  \text{ if $i$ odd, $d$ even},
\\
- 4 h h^* i (i-d-1) &   \text{ if $i$ even, $d$ odd},
\\
-4 h h^* (i+r_1)(i+r_2) &   \text{ if $i$ odd, $d$ odd},
\end{cases}
\\
\phi_i &=
\begin{cases}
4 h h^* i (i - s^* - r_1) &  \text{ if $i$ even, $d$ even},
\\
4 h h^* (i-d-1)(i - s^* - r_2) &  \text{ if $i$ odd, $d$ even},
\\
- 4 h h^* i (i-d-1) &   \text{ if $i$ even, $d$ odd},
\\
-4 h h^* (i - s^* - r_1)(i - s^* - r_2) &   \text{ if $i$ odd, $d$ odd}.
\end{cases}
\end{align*}
\end{itemize}
\end{defi}

\begin{defi}   {\rm (See \cite[Example 5.15]{T:parray}.) }
\label{def:Orphan} \samepage
\ifDRAFT {\rm def:Orphan}. \fi
The Leonard system
$\Phi$ is said to have {\em Orphan type} whenever
$d=3$ and the characteristic of $\F$ is $2$,
and there exist scalars $h, h^*, s, s^*,r$ such that
\begin{itemize}
\item[\rm (i)]
each of $h,h^*,s,s^*, r$ is nonzero;
\item[\rm (ii)]
$s \neq 1$, $s^* \neq 1$;
\item[\rm (iii)]
$r$ is equal to none of $s+s^*, s(1+s^*), s^*(1+s)$;
\item[(iv)]
\begin{align*}
\th_1 &= \th_0 + h(s+1),
&
\th_2 &= \th_0 + h,
&
\th_3 &= \th_0 + h s,
\\
\th^*_1 &= \th^*_0 + h^*(s^*+1),
&
\th^*_2 &= \th^*_0 + h^*,
&
\th^*_3 &= \th^*_0 + h^* s^*;
\end{align*}
\item[\rm (v)]
\begin{align*}
\vphi_1 &= h h^* r,
&
\vphi_2 &= h h^*,
&
\vphi_3 &= h h^* ( r + s + s^*),
\\
\phi_1 &= h h^*( r + s + s s^*),
&
\phi_2 &= h h^*,
&
\phi_3 &= h h^* (r + s^* + s s^*).
\end{align*}
\end{itemize}
\end{defi}

\section{About the equation Proposition \ref{prop:cond}(vi)}
\label{sec:neweq}
\ifDRAFT {\rm sec:neweq}. \fi

We continue to discuss the Leonard system $\Phi$ from \eqref{eq:Phi}.
By Proposition \ref{prop:cond}, we have $ {\mathcal Z}(\Phi) \neq 0$ if and only if
the following equation holds for $1 \leq i,j \leq d-1$:
\begin{equation}  \label{eq:pi2}
(a_i - a_0)
(\th^*_i - \th^*_d)
(a_j - a_d)
(\th^*_j - \th^*_0)
=
(a_i -a_d)
(\th^*_i - \th^*_0)
(a_j - a_0)
(\th^*_j - \th^*_d).
\end{equation}

In this section, we evaluate the left-hand side of \eqref{eq:pi2} minus
the right-hand side of \eqref{eq:pi2}.
We use the following expression:
\begin{equation}    \label{eq:Q}
\frac{
(\th^*_0 - \th^*_i)
(\th^*_0 - \th^*_j)
(\th^*_0 - \th^*_d)
(\th^*_i - \th^*_j)
(\th^*_i - \th^*_d)
(\th^*_j - \th^*_d)
}
{
(\th^*_0 - \th^*_1)
(\th^*_{i-1} - \th^*_i)
(\th^*_i - \th^*_{i+1})
(\th^*_{j-1}- \th^*_j)
(\th^*_j - \th^*_{j+1})
(\th^*_{d-1} - \th^*_d)
}.
\end{equation}

\begin{prop}   \label{prop:pi2}    \samepage
\ifDRAFT {\rm prop:pi2}. \fi
For $1 \leq i,j  \leq d-1$ 
the  left-hand side of \eqref{eq:pi2} minus the right-hand side of \eqref{eq:pi2} is
equal to \eqref{eq:Q} times the factor given in the table below:
\[
\begin{array}{c|c}
\text{Type} & \text{Factor}
\\ \hline
\text{$q$-Racah}                     \rule{0mm}{3.5ex}
&
  h^2 h^{*2}
 q^{-3-d}
 (q-1)^4
 (q^2-1)^2 
 (s^* - r_1^2)
(s^* - r_2^2)/s^*
\\
\text{$q$-Hahn }                  \rule{0mm}{3.2ex}
&
h^2 h^{*2} q^{-3-d}(q-1)^4(q^2-1)^2 (s^* - r^2)
\\
\text{ dual  $q$-Hahn }              \rule{0mm}{3.2ex}
&
- h^2 h^{*2} q^{-3-d}(q-1)^4(q^2-1)^2 r^2
\\
\text{ quantum $q$-Krawtchouk}               \rule{0mm}{3.2ex}
&
- h^{*2} q^{-3-d}(q-1)^4(q^2-1)^2 r^2
\\
\text{ $q$-Krawtchouk}                \rule{0mm}{3.2ex}
&
h^2 h^{*2} q^{-3-d}(q-1)^4(q^2-1)^2 s^*
\\
\text{affine  $q$-Krawtchouk}                \rule{0mm}{3.2ex}
&
- h^2 h^{*2} q^{-3-d} (q-1)^4 (q^2-1)^2 r^2
\\
\text{dual   $q$-Krawtchouk}                \rule{0mm}{3.2ex}
&
0
\\
\text{Racah}                \rule{0mm}{3.2ex}
&
4 h^2 h^{*2} (s^* - 2 r_1)(s^* - 2 r_2)
\\
\text{Hahn}                \rule{0mm}{3.2ex}
&
0
\\
\text{dual Hahn}                \rule{0mm}{3.2ex}
&
-4 h^2 s^{*2}
\\
\text{Krawtchouk}                \rule{0mm}{3.2ex}
&
0
\\
\text{Bannai/Ito}                \rule{0mm}{3.2ex}
&
 64 (-1)^{d+1} h^2 h^{* 2}  (s^* + 2 r_1)(s^* + 2 r_2)
\\
\text{Orphan}                \rule{0mm}{3.2ex}
&
h^2 h^{*2} (s^{*2} + 1)
\end{array}
\]
\end{prop}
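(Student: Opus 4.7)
The plan is direct verification, one type at a time. For each of the thirteen types, I would use the explicit formulas for $\theta_i$, $\theta^*_i$, $\varphi_i$ given in Definitions \ref{def:qRacah}--\ref{def:Orphan}, combined with Lemma \ref{lem:aiformula}, to obtain closed-form expressions for $a_0$, $a_d$, and for $a_i$ with $1 \leq i \leq d-1$. In the $q$-cases these are rational in $q^i$, and in the remaining cases they are polynomial in $i$.

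Before calculating, several observations cut down the work. Write $\Delta$ for the left-hand side of \eqref{eq:pi2} minus the right-hand side. Then $\Delta$ is antisymmetric under the swap $i \leftrightarrow j$ (since this swap exchanges the two sides), and $\Delta$ vanishes whenever $i \in \{0,d\}$, $j \in \{0,d\}$, or $i=j$. These zeros force $\Delta$ to carry each of the factors $(\theta^*_i - \theta^*_0)$, $(\theta^*_i - \theta^*_d)$, $(\theta^*_j - \theta^*_0)$, $(\theta^*_j - \theta^*_d)$, $(\theta^*_i - \theta^*_j)$ appearing in the numerator of \eqref{eq:Q}, while $(\theta^*_0 - \theta^*_d)$ arises as an overall normalizing factor. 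The denominators in \eqref{eq:Q} match precisely the denominators of the quotients $\varphi_i/(\theta^*_i - \theta^*_{i-1})$ and $\varphi_{i+1}/(\theta^*_i - \theta^*_{i+1})$ that enter $a_i$ through Lemma \ref{lem:aiformula}, together with the analogous denominators for $a_0$ and $a_d$.

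After factoring \eqref{eq:Q} out of $\Delta$, the remaining quotient must therefore be independent of $i$ and $j$, and the task reduces to computing it and matching against the table. This is often easier in a compact closed form. For instance, in the Krawtchouk case one finds $a_i = \theta_0 + s\,i + r(d - 2i)/s^*$ for all $0 \leq i \leq d$, and this linearity immediately yields $(a_i - a_0)(\theta^*_i - \theta^*_d) = (a_i - a_d)(\theta^*_i - \theta^*_0)$, giving $\Delta = 0$ and confirming the zero entry in the table. Analogous simplifications handle the dual $q$-Krawtchouk and Hahn cases, which are the other two zero entries.

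The main obstacle is the length of the thirteen computations rather than any conceptual difficulty. The $q$-Racah case is the heaviest: one substitutes the full six-parameter formula for $a_i$, clears \eqref{eq:Q}, and repeatedly uses the factorizations $1 - q^{2k} = (1-q^k)(1+q^k)$ alongside the Vandermonde-like structure of the numerator of \eqref{eq:Q}. Most of the other $q$-types can be recovered as specializations or limits of the $q$-Racah computation; similarly the Racah computation specializes to dual Hahn and Krawtchouk. The Bannai/Ito type, with its parity-dependent formulas, and the Orphan type, which is restricted to $d = 3$ in characteristic $2$, require separate direct verification. Throughout, the calculations are mechanical and amenable to computer algebra assistance.
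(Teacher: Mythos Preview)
Your approach is correct and is essentially the same as the paper's, which simply records the result as ``routine verification using the data in Section~\ref{sec:types}.'' Your additional remarks about the antisymmetry in $i,j$ and the forced vanishing at $i,j\in\{0,d\}$ explain why the expression \eqref{eq:Q} factors out, but the underlying method---direct substitution of the type-by-type formulas for $\theta^*_i$ and $a_i$ via Lemma~\ref{lem:aiformula}---is exactly what the paper intends.
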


\begin{proof}
Routine verification using the data in Section \ref{sec:types}.
\end{proof}

\section{A necessary and sufficient condition for $ {\mathcal Z}(\Phi) \neq 0$}
\label{sec:main}
\ifDRAFT {\rm sec:main}. \fi

We continue to discuss the Leonard system $\Phi$ from \eqref{eq:Phi}.
In this section, we find a necessary and sufficient condition for
${\mathcal Z}(\Phi) \neq 0$.

\begin{theorem}   \label{thm:main1}    \samepage
\ifDRAFT {\rm thm:main1}. \fi
Assume that $\Phi$ has one of the following types:
\begin{center}   \samepage
dual $q$-Krawtchouk,
\quad
Hahn,
\quad
Krawtchouk.
\end{center}
Then $ {\mathcal Z}(\Phi) \neq 0$.
\end{theorem}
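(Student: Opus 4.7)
The plan is to read off the result directly from Proposition \ref{prop:pi2} together with Proposition \ref{prop:cond}. First I would note that equation \eqref{eq:pi2}, evaluated at $(i,j)$ with $1 \le i,j \le d-1$, can be rewritten in terms of the scalars of Definition \ref{def:amap} as
\[
 a^-_i a^+_j = a^+_i a^-_j,
\]
which is exactly condition (vi) of Proposition \ref{prop:cond}. By that proposition, condition (vi) is equivalent to ${\mathcal Z}(\Phi) \neq 0$. Hence it suffices to verify \eqref{eq:pi2} in the three stated types.

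Next I would invoke Proposition \ref{prop:pi2}, which asserts that for each $1 \le i,j \le d-1$ the quantity $\mathrm{LHS}\eqref{eq:pi2} - \mathrm{RHS}\eqref{eq:pi2}$ factors as the expression \eqref{eq:Q} times a type-dependent factor. Inspecting the table in that proposition, the factor is equal to $0$ in precisely the rows labeled dual $q$-Krawtchouk, Hahn, and Krawtchouk. Therefore, for each of these three types, the difference $\mathrm{LHS}\eqref{eq:pi2} - \mathrm{RHS}\eqref{eq:pi2}$ vanishes identically in $(i,j)$, so \eqref{eq:pi2} holds for all $1 \le i,j \le d-1$.

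Combining these two steps, Proposition \ref{prop:cond}(vi) is satisfied, so Proposition \ref{prop:cond}(i) holds, i.e., ${\mathcal Z}(\Phi) \neq 0$. There is no real obstacle here: all the substantive work has been done in Proposition \ref{prop:pi2}, whose verification of the three ``factor $= 0$'' rows is a routine direct computation from the explicit parameter arrays of Definitions \ref{def;dualqKrawt}, \ref{def:Hahn}, and \ref{def:Krawt} (together with Lemma \ref{lem:aiformula} to produce the intersection numbers $a_i$). The present theorem is just the translation of those three vanishings into a statement about ${\mathcal Z}(\Phi)$.
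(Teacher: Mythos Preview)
Your proposal is correct and follows essentially the same approach as the paper: the paper's proof is simply ``By Proposition \ref{prop:cond} and Proposition \ref{prop:pi2},'' and you have spelled out exactly how those two results combine. Your additional remark that \eqref{eq:pi2} is the same as condition (vi) of Proposition \ref{prop:cond} via Definition \ref{def:amap} is accurate and just makes the logic explicit.
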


\begin{proof}
By Proposition \ref{prop:cond} and 
Proposition \ref{prop:pi2}.
\end{proof}

\newpage

\begin{theorem}   \label{thm:main2}   
\ifDRAFT {\rm thm:main2}. \fi
Assume that $\Phi$ has one of the following types:
\begin{center}   \samepage
dual $q$-Hahn,
\quad
quantum $q$-Krawtchouk,
\quad
$q$-Krawtchouk,
\\
affine $q$-Krawtchouk,
\quad
dual Hahn,
\quad
Orphan.
\end{center}
Then $ {\mathcal Z}(\Phi) = 0$.
\end{theorem}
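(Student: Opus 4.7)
The plan is to apply Propositions \ref{prop:cond} and \ref{prop:pi2} in the direction complementary to Theorem \ref{thm:main1}. By Proposition \ref{prop:cond}(vi), ${\mathcal Z}(\Phi) \neq 0$ if and only if equation \eqref{eq:pi2} holds for every pair $1 \leq i, j \leq d-1$. By Proposition \ref{prop:pi2}, the left side of \eqref{eq:pi2} minus the right side equals the expression \eqref{eq:Q} multiplied by a scalar $F$ that depends only on the type and its parameters, not on $i$ or $j$. Consequently, to prove ${\mathcal Z}(\Phi) = 0$ it suffices to exhibit a single pair $(i, j)$ with $1 \leq i, j \leq d-1$ for which both \eqref{eq:Q} and $F$ are nonzero.

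Since $d \geq 3$ we may take $i = 1$ and $j = 2$. Each of the six factors in the numerator of \eqref{eq:Q} is then a difference of two distinct entries from $\theta^*_0, \theta^*_1, \theta^*_2, \theta^*_d$, which are mutually distinct as members of the dual eigenvalue sequence; likewise the six factors in the denominator are nonzero. Hence \eqref{eq:Q} is a nonzero scalar. The theorem thus reduces to verifying, for each of the six listed types, that the corresponding factor $F$ recorded in the table of Proposition \ref{prop:pi2} is nonzero.

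The verification is a direct case-by-case inspection using the parameter restrictions of Definitions \ref{def:dualqHahn}, \ref{def:quantumqKrawt}, \ref{def:qKrawt}, \ref{ded:affineqKrawt}, \ref{def:dualHahn} and \ref{def:Orphan}. For the four $q$-types the factor has the shape $(\text{nonzero constant}) \cdot q^{-3-d}(q-1)^4 (q^2-1)^2 \cdot P$, where $P$ is $r^2$, $r^2$, $s^*$, or $r^2$ respectively; the relevant definitions force $q, h, h^*$ and the parameter appearing in $P$ to be nonzero, while the stipulation that $q^i \neq 1$ for $1 \leq i \leq d$ (together with $d \geq 3$) guarantees $q - 1 \neq 0$ and $q^2 - 1 \neq 0$. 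For dual Hahn the factor is $-4 h^2 s^{*2}$, and Definition \ref{def:dualHahn} requires $h, s^* \neq 0$ and the characteristic of $\F$ to be either $0$ or a prime exceeding $d \geq 3$, so in particular $4 \neq 0$ in $\F$.

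The one case warranting brief care is Orphan. Here $F = h^2 h^{*2}(s^{*2} + 1)$ and Definition \ref{def:Orphan} fixes $\text{char}(\F) = 2$, so $s^{*2} + 1 = (s^* + 1)^2 = (s^* - 1)^2$; since $s^* \neq 1$ by Definition \ref{def:Orphan}(ii) and $h, h^*$ are nonzero by Definition \ref{def:Orphan}(i), the factor $F$ is nonzero. This completes the plan. No step is conceptually subtle; the only mild obstacle is recognizing the characteristic-$2$ rewriting of $s^{*2}+1$ in the Orphan case.
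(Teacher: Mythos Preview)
Your proof is correct and follows exactly the same approach as the paper: combine Proposition~\ref{prop:cond} with Proposition~\ref{prop:pi2}. The paper's proof is the single sentence ``By Proposition~\ref{prop:cond} and Proposition~\ref{prop:pi2},'' whereas you have spelled out the verification that the factor $F$ is nonzero in each case (including the characteristic-$2$ rewriting for the Orphan type), so your version is simply a more detailed rendering of the same argument.
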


\begin{proof}
By Proposition \ref{prop:cond} and 
Proposition \ref{prop:pi2}.
\end{proof}

\begin{theorem}   \label{thm:main}    \samepage
\ifDRAFT {\rm thm:main}. \fi
For the following types, $ {\mathcal Z}(\Phi) \neq 0$
if and only if the given condition is satisfied.
\begin{center}
\begin{tabular}{c|c}
{\rm Type} & {\rm Condition}
\\ \hline
$q$-Racah
& 
$s^* = r_1^2$ 
$\quad$ or $\quad$
$s^* = r_2^2$    \rule{0mm}{3ex}
\\
$q$-Hahn
& 
$s^* = r^2$                    \rule{0mm}{2.8ex}
\\
Racah
&
$\;$
$s^* =2 r_1$ 
$\quad$ or $\quad$
$s^* = 2 r_2$           \rule{0mm}{2.8ex}
\\
Bannai/Ito
&
$\;$
$s^* = - 2 r_1$ 
$\quad$ or $\quad$
$s^* = -2 r_2$                             \rule{0mm}{2.8ex}
\end{tabular}
\end{center}
\end{theorem}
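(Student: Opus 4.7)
The plan is to combine Proposition~\ref{prop:cond} with Proposition~\ref{prop:pi2}. By Proposition~\ref{prop:cond}(vi), $\mathcal{Z}(\Phi) \neq 0$ if and only if equation \eqref{eq:pi2} holds for all $1 \leq i, j \leq d-1$. By Proposition~\ref{prop:pi2}, for each such $(i,j)$ the quantity (LHS of \eqref{eq:pi2}) minus (RHS of \eqref{eq:pi2}) is equal to the expression \eqref{eq:Q} times a ``factor'' that depends only on the type (not on $i, j$). So $\mathcal{Z}(\Phi) \neq 0$ if and only if this type-dependent factor is zero, provided we can exhibit one valid pair $(i, j)$ for which \eqref{eq:Q} is nonzero.

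First I would handle the latter reduction. Since $d \geq 3$, the pair $(i, j) = (1, 2)$ lies in the range $1 \leq i, j \leq d-1$. For this choice every factor in the numerator and denominator of \eqref{eq:Q} has the form $\theta^*_k - \theta^*_\ell$ with $k \neq \ell$ in $\{0, 1, \ldots, d\}$. By Lemma~\ref{lem:mfree}, $A^*$ is multiplicity-free, so the $\{\theta^*_i\}_{i=0}^d$ are mutually distinct and each such difference is nonzero. Hence \eqref{eq:Q} evaluated at $(1, 2)$ is nonzero, which completes the reduction.

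It remains, for each of the four listed types, to split the factor from Proposition~\ref{prop:pi2} into a ``prefactor'' that is nonzero by the axioms of that type and an ``essential part'' whose vanishing gives the stated condition. For $q$-Racah type, the prefactor $h^2 h^{*2} q^{-3-d}(q-1)^4(q^2-1)^2/s^*$ is nonzero because $h, h^*, q, s^*$ are nonzero (Definition~\ref{def:qRacah}(i)) and $q, q^2 \neq 1$ (apply Definition~\ref{def:qRacah}(iii) with $i = 1, 2$, valid since $d \geq 3$), so the factor vanishes if and only if $(s^* - r_1^2)(s^* - r_2^2) = 0$. The $q$-Hahn case is analogous, using Definition~\ref{def:qHahn}. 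For Racah type the prefactor is $4 h^2 h^{*2}$, which is nonzero because $h, h^*$ are nonzero and the characteristic hypothesis (Definition~\ref{def:Racah}(iii)) forces $4 \neq 0$ in $\F$. For Bannai/Ito type the prefactor $64(-1)^{d+1} h^2 h^{*2}$ is nonzero by Definition~\ref{def:BI}(i) together with the characteristic hypothesis Definition~\ref{def:BI}(ii), which forces $64 \neq 0$. In each case the essential part is a product of two linear factors in $s^*$, and its vanishing is exactly the stated disjunction.

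The argument is not difficult; the main point that requires care is the step-by-step verification that every sub-factor of the claimed prefactor is nonzero using the specific axioms given for that type in Section~\ref{sec:types}, rather than overlooking a hypothesis (for example, needing $d \geq 3$ to guarantee $q^2 \neq 1$ from the axiom $q^i \neq 1$ for $1 \leq i \leq d$, or needing the characteristic assumption to guarantee $4, 64 \neq 0$).
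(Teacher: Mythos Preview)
Your proof is correct and follows exactly the same route as the paper, which simply cites Proposition~\ref{prop:cond} and Proposition~\ref{prop:pi2}; you have merely spelled out the details (the choice $(i,j)=(1,2)$ to make \eqref{eq:Q} nonzero, and the verification that each prefactor is nonzero from the type axioms) that the paper leaves implicit.
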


\begin{proof}
By Proposition \ref{prop:cond} and 
Proposition \ref{prop:pi2}.
\end{proof}

\section{The Leonard systems with $\dim {\mathcal Z}(\Phi) = 2$}
\label{sec:dimZ2}
\ifDRAFT {\rm sec:dimZ2}. \fi

We continue to discuss the Leonard system $\Phi$ from \eqref{eq:Phi}.
Recall by Lemma \ref{lem:condd} that 
$\dim {\mathcal Z}(\Phi) = 2$ if and only if 
$a_0 = a_1 = \cdots = a_d$.
In this section,  we examine this condition
for each of the types listed in Section \ref{sec:types}.

\begin{prop}   {\rm (See \cite[Section 17]{NT:Nbip}.) }
\label{prop:Bip}     \samepage
\ifDRAFT {\rm prop:Bip}. \fi
If $\dim {\mathcal Z}(\Phi) = 2$ then $\Phi$ has one of the types:
\begin{center}
$q$-Racah,
\quad
dual $q$-Krawtchouk,
\quad
Hahn,
\quad
Krawtchouk,
\quad
Bannai/Ito with $d$ even.
\end{center}
For each of the above types,
$\dim {\mathcal Z}(\Phi) = 2$
if and only if the following condition holds:
\begin{center}
\begin{tabular}{c|c}
{\em Type} & {\rm Condition}
\\ \hline
$q$-Racah
&
$\quad$
$ s^* = r_1^2\;\;$  and  $\;\;s=-q^{-d-1}$                \rule{0mm}{3.5ex}
\\
dual $q$-Krawtchouk
&
$s= - q^{-d-1}$             \rule{0mm}{3.2ex}
\\
Hahn
&
$s^* = 2 r$               \rule{0mm}{3.2ex}
\\
Krawtchouk 
&
$ s s^* = 2 r$                  \rule{0mm}{3.2ex}
\\
Bannai/Ito,  $d$ even
& \;\; $s^* = - 2 r_1 \;\;$ and  $\;\;s=d+1$                  \rule{0mm}{3.2ex}
\end{tabular}
\end{center}
\end{prop}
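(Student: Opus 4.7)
The approach is to apply Lemma \ref{lem:condd}, which says that $\dim {\mathcal Z}(\Phi) = 2$ if and only if $a_0 = a_1 = \cdots = a_d$. Since this condition implies ${\mathcal Z}(\Phi) \neq 0$, Theorem \ref{thm:main2} immediately eliminates the types dual $q$-Hahn, quantum $q$-Krawtchouk, $q$-Krawtchouk, affine $q$-Krawtchouk, dual Hahn, and Orphan. Further, Theorem \ref{thm:main} forces the corresponding necessary conditions on $s^*$ (in terms of $r_1, r_2$ or $r$) to already hold for the $q$-Racah, $q$-Hahn, Racah, and Bannai/Ito types. So only the types $q$-Racah, $q$-Hahn, dual $q$-Krawtchouk, Racah, Hahn, Krawtchouk, and Bannai/Ito remain to be analyzed under the stronger hypothesis $a_0 = a_1 = \cdots = a_d$.

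For each of these remaining types, the plan is to insert the explicit expressions for $\theta_i$ and $\varphi_i$ from Section \ref{sec:types} into the formula for $a_i$ from Lemma \ref{lem:aiformula}, and then simplify $a_i - a_0$. In every case $a_i - a_0$ turns out to be a rational function either of $q^i$ (for the $q$-types) or of $i$ (for the Racah, Hahn, Krawtchouk, and Bannai/Ito types). The requirement that this function vanish for all $1 \leq i \leq d$ forces its numerator, viewed as a polynomial in $q^i$ or $i$, to vanish identically. Equating polynomial coefficients to zero yields algebraic equations on the parameters, which I expect to collapse to exactly the conditions listed in the statement: for example $s = -q^{-d-1}$ combined with the earlier $s^* = r_1^2$ in the $q$-Racah case, and $s^* = 2r$ for Hahn. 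For the excluded cases $q$-Hahn, Racah, and Bannai/Ito with $d$ odd, the analogous computation should produce an over-determined system with no solution, thereby ruling them out. The converse direction is then handled by substituting each listed condition back into the formulas and verifying by direct calculation that $a_i$ is indeed independent of $i$.

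The main obstacle is the Bannai/Ito case. Because $\theta_i$ and $\varphi_i$ involve the sign $(-1)^i$ and branch on the parities of both $i$ and $d$, the formula from Lemma \ref{lem:aiformula} for $a_i$ splits into four subcases according to these parities. Requiring $a_i$ to agree across the parity classes is precisely what forces $d$ to be even, and the subsequent polynomial identity in $i$ then forces both $s = d+1$ and $s^* = -2r_1$ (or $s^* = -2r_2$). Managing this parity bookkeeping is the most delicate step; for the other types the calculation reduces to routine symbolic manipulation guided by the structure already apparent in Section \ref{sec:types}.
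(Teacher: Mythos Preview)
The paper does not actually prove this proposition; it is imported wholesale from \cite[Section 17]{NT:Nbip} with no argument given here. Your proposal therefore cannot be compared to a proof in the paper, but it can be assessed on its own merits.

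Your strategy is sound. Reducing to the condition $a_0=a_1=\cdots=a_d$ via Lemma~\ref{lem:condd}, then using Theorems~\ref{thm:main2} and~\ref{thm:main} to prune the list of types and impose the preliminary constraints on $s^*$, is exactly the right way to leverage the machinery already built in Sections~\ref{sec:neweq}--\ref{sec:main}. The residual computation---inserting the data of Section~\ref{sec:types} into Lemma~\ref{lem:aiformula} and reading off when $a_i-a_0$ vanishes identically---is routine for the $q$-types and the Racah/Hahn/Krawtchouk types; in each case the numerator has degree small enough (at most three in $q^i$ or $i$) that vanishing at $d\geq 3$ points forces the polynomial identity you need. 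One small point worth noting: in the $q$-Racah and Bannai/Ito cases the final conditions listed mention only $r_1$, but the alternative $s^*=r_2^2$ (resp.\ $s^*=-2r_2$) is not lost---once the second condition $s=-q^{-d-1}$ (resp.\ $s=d+1$) is imposed, the constraint $r_1r_2=ss^*q^{d+1}$ (resp.\ $r_1+r_2=-s-s^*+d+1$) forces $r_2=-r_1$ (resp.\ $r_2=r_1$), so the two branches coalesce. You should make this explicit when writing up the argument. The Bannai/Ito parity analysis is indeed the only genuinely fiddly part, but your description of how to handle it is correct.
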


\section{A relation between $a^-_i$ and $a^+_i$}
\label{sec:rel}
\ifDRAFT {\rm sec:rel}. \fi

We continue to  discuss the Leonard system $\Phi$ from \eqref{eq:Phi}.
Recall  the scalars $\{a^-_i\}_{i=0}^d$ and $\{a^+_i\}_{i=0}^d$  from Definition \ref{def:amap}.
In this section 
we assume ${\mathcal Z}(\Phi) \neq 0$,
and obtain a relation between $a^-_i$ and $a^+_i$ for $0 \leq i \leq d$.

\begin{prop}    \label{prop:amap}    \samepage
\ifDRAFT {\rm prop:smsp}. \fi
Assume that  $ {\mathcal Z}(\Phi) \neq 0$.
Then
$a^-_i$ and $a^+_i$ are related as follows  for $0 \leq i \leq d$:
\[
\begin{array}{c|c}
\text{Case} & \text{Relation between $a^-_i$ and  $a^+_i$}
\\ \hline
\text{ $q$-Racah},       \rule{0mm}{3.5ex}
s^* = r_1^2
&
a^-_i
q^d
(r_1 + 1)
(r_1 q + 1)
= a^+_i
 (r_1 q^d + 1)
(r_1 q^{d+1} + 1)
\\
\text{ $q$-Racah},       \rule{0mm}{3.2ex}
s^* = r_2^2
&
a^-_i
q^d
(r_2 + 1)
(r_2 q + 1)
= a^+_i
 (r_2 q^d + 1)
(r_2 q^{d+1} + 1)
\\
\text{ $q$-Hahn},    \;    \rule{0mm}{3.5ex}
s^* = r^2
&
 a^-_i 
q^d
(r+1)
(r q + 1)
= a^+_i
(r q^d+1)
(r q^{d+1} + 1)
\\
\text{dual $q$-Krawtchouk}    \;    \rule{0mm}{3.5ex}
&
a^-_i q^d  = a^+_i
\\
\text{Racah}, s^* = 2 r_1    \;    \rule{0mm}{3.5ex}
&
a^-_i = a^+_i
\\
\text{Racah}, s^* = 2 r_2    \;    \rule{0mm}{3.5ex}
&
a^-_i = a^+_i
\\
\text{Hahn}    \;    \rule{0mm}{3.5ex}
&
a^-_i 
s^*
(s^* + 2)
=
a^+_i
(s^* + 2d)
(s^* + 2d +2)
\\
\text{Krawtchouk}    \;    \rule{0mm}{3.5ex}
&
 a^-_i = a^+_i
\\
\text{Bannai/Ito, $d$ even},    \;    \rule{0mm}{3.5ex}
s^* = - 2 r_1
&
a^-_i (r_1+1) = a^+_i (r_1 + d + 1)
\\
\text{Bannai/Ito, $d$ even},    \;    \rule{0mm}{3.5ex}
s^* = - 2 r_2
&
 a^-_i r_2 =  a^+_i (r_2 +d)
\\
\text{Bannai/Ito, $d$ odd},    \;    \rule{0mm}{3.5ex}
s^* = - 2 r_1
&
 a^-_i r_1  = - a^+_i (r_1 + d + 1)
\\
\text{Bannai/Ito, $d$ odd},    \;    \rule{0mm}{3.5ex}
s^* = - 2 r_2
&
 a^-_i r_2 = - a^+_i (r_2 +d+1)
\end{array}
\]
\end{prop}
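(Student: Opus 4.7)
The plan is to exploit Proposition \ref{prop:cond}(iii): under the hypothesis ${\mathcal Z}(\Phi)\neq 0$, the sequences $(a^-_i)_{i=0}^d$ and $(a^+_i)_{i=0}^d$ are linearly dependent over $\F$, so there exist $\lambda,\mu\in\F$, not both zero, with $\lambda a^-_i=\mu a^+_i$ for $0\le i\le d$. Only the ratio $\lambda:\mu$ is at issue, so it suffices to evaluate at a single index $i_0\in\{1,\dots,d-1\}$ at which at least one of $a^-_{i_0},a^+_{i_0}$ is nonzero; in the degenerate case where all $a^\pm_i$ vanish, the claim is trivial (this is the $a_0=a_1=\cdots=a_d$ case of Lemma \ref{lem:condd}, and a row-by-row inspection confirms that in that case the tabulated relation holds trivially with both sides zero).

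I would take $i_0=1$. By Definition \ref{def:amap},
\[
a^-_1=(a_1-a_0)(\th^*_1-\th^*_d),\qquad a^+_1=(a_1-a_d)(\th^*_1-\th^*_0),
\]
and by Lemma \ref{lem:aiformula},
\[
a_1-a_0=(\th_1-\th_0)+\frac{2\vphi_1}{\th^*_1-\th^*_0}+\frac{\vphi_2}{\th^*_1-\th^*_2},
\]
\[
a_1-a_d=(\th_1-\th_d)+\frac{\vphi_1}{\th^*_1-\th^*_0}+\frac{\vphi_2}{\th^*_1-\th^*_2}-\frac{\vphi_d}{\th^*_d-\th^*_{d-1}}.
\]
For each row of the tabulated statement, one substitutes the closed-form data for $\th_i$, $\th^*_i$, $\vphi_i$ from Section \ref{sec:types}, together with the defining specialization of that row (e.g.\ $s^*=r_1^2$ for the first $q$-Racah entry, $s^*=2r_1$ for the first Racah entry, $s^*=-2r_1$ for the first Bannai/Ito entries, and no specialization at all for the dual $q$-Krawtchouk row), then computes $a^-_1$ and $a^+_1$ in factored form and reads off the proportion. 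By the reduction above, the resulting linear relation then extends to every $0\le i\le d$.

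The main obstacle is the volume of case analysis: twelve rows must be treated, with the four Bannai/Ito entries requiring separate subcalculations for $d$ even and $d$ odd owing to the parity-dependent formulas for $\vphi_i$ in Definition \ref{def:BI}. Each individual verification is nevertheless an elementary symbolic manipulation of the same character as the ``routine verification'' invoked in the proof of Proposition \ref{prop:pi2}; the cancellations produced by substituting the specialization (e.g.\ replacing $s^*$ by $r_1^2$, or $s^*$ by $2r_1$) are precisely what collapse the ratio $a^-_1/a^+_1$ to the factored expressions displayed in the table.
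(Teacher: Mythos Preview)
Your approach is correct and amounts to a legitimate shortcut of the paper's method. The paper's one-line proof indicates a direct verification: for each row of the table, one substitutes the closed-form expressions for $a_i$ (Lemma~\ref{lem:aiformula}) and $\theta^*_i$ (Section~\ref{sec:types}), together with the row's specialization, and checks the stated identity for \emph{general} $i$ as a symbolic identity. Because the type data are uniform in $i$, this is scarcely more laborious than your evaluation at the single index $i=1$, and it sidesteps the appeal to Proposition~\ref{prop:cond} altogether. What your route buys is a conceptual explanation of why a single check suffices; what the paper's route buys is that no auxiliary nondegeneracy argument is needed.

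Your reduction via Proposition~\ref{prop:cond}(iii) carries one small obligation you did not discharge: for the ratio $\lambda:\mu$ to be read off at $i_0=1$, you need at least one of $a^-_1,a^+_1$ to be nonzero. Since $a^-_1=0\Leftrightarrow a_1=a_0$ and $a^+_1=0\Leftrightarrow a_1=a_d$, the obstruction is $a_0=a_1=a_d$, which does not by itself force the degenerate case $a_0=\cdots=a_d$ of Lemma~\ref{lem:condd}. In each of the twelve rows you would in any case compute $a^-_1,a^+_1$ in factored form; it then suffices to note, using the parameter inequalities recorded in Section~\ref{sec:types}, that at least one of these is nonzero, or else to pass to another $i_0$. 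This is a bookkeeping point rather than a flaw in the strategy.
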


\begin{proof}
Routine verification using Lemma \ref{lem:aiformula} 
and the data in Section \ref{sec:types}.
\end{proof}

\section{The  Leonard systems $\Phi$ such that $ \dim {\mathcal Z}(\Phi)=1$}
\label{sec:rev} 
\ifDRAFT {\rm sec:rev}. \fi

We continue to discuss the Leonard system $\Phi$ from \eqref{eq:Phi}.
In Lemma \ref{lem:EB2} we gave a basis for 
$ {\mathcal Z}(\Phi)$
under the assumption that $\dim {\mathcal Z}(\Phi)= 2$.
In this section, we give a basis for
$ {\mathcal Z}(\Phi)$
under the assumption that $\dim {\mathcal Z}(\Phi)= 1$.

\begin{prop} \label{prop:amap2}     \samepage
\ifDRAFT {\rm prop:amap2}. \fi
Assume that  ${\mathcal Z}(\Phi) \neq 0$.
Then the following  element is nonzero and  contained in  
${\mathcal Z}(\Phi)$.
\[
\begin{array}{c|l}
\text{Case}
& \qquad\quad
\text{A nonzero element in ${\mathcal Z}(\Phi)$}
\\ \hline
\
\text{$q$-Racah, $s^* =  r_1^2$}            \rule{0mm}{5ex}
&
\begin{array}{l}
(A - a_0 I)(A^* - \th^*_d I) q^d  (r_1 + 1)
(r_1 q + 1)
\\ \qquad
-
(A - a_d I)(A^* - \th^*_0 I)  (r_1 q^d + 1)
(r_1 q^{d+1} + 1)
\end{array}
\\
\text{$q$-Racah, $s^* =  r_2^2$}            \rule{0mm}{5ex}
&
\begin{array}{l}
(A - a_0 I)(A^* - \th^*_d I) q^d
(r_2 + 1)
(r_2 q + 1)
\\ \ \qquad
-
(A - a_d I)(A^* - \th^*_0 I)  (r_2 q^d + 1)
(r_2 q^{d+1} + 1)
\end{array}
\\
\text{$q$-Hahn,  $s^* = r^2$  }            \rule{0mm}{5ex}
&
\begin{array}{l}
(A - a_0 I)(A^* - \th^*_d I)
q^d
(r+1)
(r q + 1)
\\ \qquad
-
(A - a_d I)(A^* - \th^*_0 I) (r q^d+1)
(r q^{d+1} + 1)
\end{array}
\\
\text{dual $q$-Krawtchouk}            \rule{0mm}{3.5ex}
&
\begin{array}{l}
(A - a_0 I)(A^* - \th^*_d I ) q^d
-
(A - a_d I)(A^* - \th^*_0 I) 
\end{array}
\\
\text{Racah, $s^* = 2 r_1$ }             \rule{0mm}{3.5ex}
&
\begin{array}{l}
(A - a_0 I)(A^* - \th^*_d I)
-
(A - a_d I)(A^* - \th^*_0 I)
\end{array}
\\
\text{Racah,  $s^* = 2 r_2$}             \rule{0mm}{3.5ex}
&
\begin{array}{l}
(A - a_0 I)(A^* - \th^*_d I)
-
(A - a_d I)(A^* - \th^*_0 I)
\end{array}
\\
\text{Hahn}          \rule{0mm}{5ex}
&
\begin{array}{l}
(A - a_0 I)(A^* - \th^*_d I)s^*
(s^* + 2)
\\ \qquad
-
(A - a_d I)(A^* - \th^*_0 I) (s^* + 2d)
(s^* + 2d +2)
\end{array}
\\
\text{Krawtchouk}            \rule{0mm}{3.5ex}
& 
\begin{array}{l}
(A - a_0 I)(A^* - \th^*_d I )
-
(A - a_d I)(A^* - \th^*_0 I)
\end{array}
\\
\text{Bannai/Ito, $d$  even, $s^* = - 2 r_1$}            \rule{0mm}{5ex}
& 
\begin{array}{l}
(A - a_0 I)(A^* - \th^*_d I)(r_1+1) 
\\ \qquad
-
(A - a_d I)(A^* - \th^*_0 I) (r_1 + d + 1)
\end{array}
\\
\text{Bannai/Ito, $d$ even, $s^* = - 2 r_2$}            \rule{0mm}{5ex}
&
\begin{array}{l}
(A - a_0 I)(A^* - \th^*_d I)  r_2 
\\ \qquad
-
(A - a_d I)(A^* - \th^*_0 I)  (r_2 +d)
\end{array}
\\
\text{Bannai/Ito, $d$ odd, $s^* = - 2 r_1$}            \rule{0mm}{5ex}
& 
\begin{array}{l}
(A - a_0 I)(A^* - \th^*_d I)  r_1
\\ \qquad
+
(A - a_d I)(A^* - \th^*_0 I)  (r_1 +d+1)
\end{array}
\\
\text{Bannai/Ito, $d$ odd, $s^* = - 2 r_2$}            \rule{0mm}{5ex}
&
\begin{array}{l}
(A - a_0 I)(A^* - \th^*_d I)  r_2 
\\ \qquad
+
(A - a_d I)(A^* - \th^*_0 I)  (r_2 +d+1)
\end{array}
\end{array}
\]
Moreover, the above element is a basis of
${\mathcal Z}(\Phi)$,
provided that $\dim  {\mathcal Z}(\Phi) =  1$.
\end{prop}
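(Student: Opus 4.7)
The plan is to use Proposition \ref{prop:amap} directly to exhibit the advertised basis of ${\mathcal Z}(\Phi)$. First, for $0 \leq i \leq d$, Lemma \ref{lem:ai} together with $A^* E^*_i = \th^*_i E^*_i = E^*_i A^*$ gives
\begin{align*}
E^*_i (A-a_0 I)(A^* - \th^*_d I) E^*_i &= (a_i - a_0)(\th^*_i - \th^*_d) E^*_i = a^-_i E^*_i, \\
E^*_i (A-a_d I)(A^* - \th^*_0 I) E^*_i &= (a_i - a_d)(\th^*_i - \th^*_0) E^*_i = a^+_i E^*_i.
\end{align*}
Both products lie in $\text{Span}\{I, A^*, A, A A^*\}$, so for any scalars $\alpha,\beta$ the element
\[
Y_{\alpha,\beta} = \alpha (A-a_0 I)(A^* - \th^*_d I) - \beta (A-a_d I)(A^* - \th^*_0 I)
\]
belongs to $\text{Span}\{I, A^*, A, A A^*\}$ and satisfies $E^*_i Y_{\alpha,\beta} E^*_i = (\alpha a^-_i - \beta a^+_i) E^*_i$ for $0 \leq i \leq d$. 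Hence by Definition \ref{def:Z}, $Y_{\alpha,\beta} \in {\mathcal Z}(\Phi)$ if and only if $\alpha a^-_i = \beta a^+_i$ for $0 \leq i \leq d$.

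Next I would read off from Proposition \ref{prop:amap} the scalars $\alpha, \beta$ that, in each row, satisfy $\alpha a^-_i = \beta a^+_i$; these are exactly the coefficients appearing in front of $(A-a_0 I)(A^* - \th^*_d I)$ and $(A-a_d I)(A^* - \th^*_0 I)$ in the table. Substituting them into $Y_{\alpha,\beta}$ reproduces precisely the element displayed in the proposition, so this element lies in ${\mathcal Z}(\Phi)$.

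To see that $Y_{\alpha,\beta}$ is nonzero I invoke Lemma \ref{lem:linindep}: the two products $(A-a_0 I)(A^* - \th^*_d I)$ and $(A-a_d I)(A^* - \th^*_0 I)$ are linearly independent, so $Y_{\alpha,\beta} = 0$ forces $\alpha = \beta = 0$. Thus it suffices to verify in each of the twelve rows that the two displayed scalar coefficients do not vanish simultaneously. This is a direct case-by-case check against the parameter constraints of Section \ref{sec:types}; for instance, in the $q$-Racah row with $s^* = r_1^2$, using Definition \ref{def:qRacah}(iii)--(iv) one rules out the simultaneous vanishing of $q^d(r_1+1)(r_1 q+1)$ and $(r_1 q^d + 1)(r_1 q^{d+1} + 1)$, and the other rows are handled similarly. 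Once this is established, the last sentence of the proposition is immediate: if $\dim {\mathcal Z}(\Phi) = 1$, then any nonzero element of ${\mathcal Z}(\Phi)$ is a basis.

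The main obstacle is the last step, the case-by-case verification that in every row at least one of the two coefficients is nonzero. The Bannai/Ito rows split by the parity of $d$ and interact with the characteristic condition in Definition \ref{def:BI}(ii), while the several $q$-type rows require careful use of the forbidden values in Definitions \ref{def:qRacah} and \ref{def:qHahn}; keeping this bookkeeping consistent across all twelve rows is where the bulk of the routine work lies.
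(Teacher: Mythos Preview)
Your proposal is correct and follows essentially the same route as the paper's proof: the paper cites Lemma \ref{lem:cond}, Definition \ref{def:amap}, and Proposition \ref{prop:amap} for membership in ${\mathcal Z}(\Phi)$, and then Lemma \ref{lem:linindep} together with the inequalities in Section \ref{sec:types} for nonvanishing, which is exactly what you do (your direct computation of $E^*_i Y_{\alpha,\beta} E^*_i$ simply unpacks Lemma \ref{lem:cond}). The case-by-case check that $(\alpha,\beta)\neq(0,0)$ is indeed the only remaining work, and the paper likewise defers this to the parameter constraints listed in Section \ref{sec:types}.
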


\begin{proof}
The given element is contained in  ${\mathcal Z}(\Phi)$
by Lemma \ref{lem:cond}, Definition \ref{def:amap}, and Proposition \ref{prop:amap}.
The given element is nonzero, by Lemma \ref{lem:linindep} and the inequalities
in Section \ref{sec:types}.
The result follows.
\end{proof}

\section{Self-dual  Leonard pairs and Leonard systems}
\label{sec:SD}
\ifDRAFT {\rm sec:SD}. \fi

We are done with our first topic. We now
consider our second topic, concerning
spin Leonard pairs.
These Leonard pairs have a property
called self-dual. In this section, we describe
the self-dual property.

\begin{defi}    {\rm (See \cite[Definition 7.1]{Cur2}, \cite[Definition 3.4]{NT:sd}.) }
\label{def:sdLP}    \samepage
\ifDRAFT {\rm lem:sdLP}. \fi
A Leonard pair $A,A^*$ on $V$ is said to be {\em self-dual}
whenever $A, A^*$ is isomorphic to the Leonard pair $A^*, A$.
\end{defi}

\begin{defi}       {\rm (See \cite[Definition 3.4]{NT:sd}.) }
\label{def:sd}    \samepage
\ifDRAFT {\rm def:sd}. \fi
A Leonard system $\Phi$ on $V$ is said to be {\em self-dual}
whenever $\Phi$ is isomorphic to $\Phi^*$.
\end{defi}
  
\begin{lemma}    \label{lem:sdLP}    \samepage
\ifDRAFT {\rm lem:sdLP}. \fi
Let $A,A^*$ denote a self-dual Leonard pair on $V$.
Then there exists an associated Leonard system $\Phi$ that is
self-dual.
\end{lemma}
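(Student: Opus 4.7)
The plan is to produce the desired self-dual Leonard system from among the four Leonard systems $\Phi,\Phi^\downarrow,\Phi^\Downarrow,\Phi^{\downarrow\Downarrow}$ associated with $A,A^*$. Fix any one of them and call it $\Phi$. By the self-duality hypothesis there exists an $\F$-algebra automorphism $\sigma$ of $\mathrm{End}(V)$ with $A^\sigma=A^*$ and $(A^*)^\sigma=A$. Because $A,A^*$ generate $\mathrm{End}(V)$ by Lemma \ref{lem:generate}, and $\sigma^{2}$ fixes both generators, we conclude $\sigma^{2}=\mathrm{id}$.

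For any Leonard system $\Psi$ associated with $A,A^*$, the image $\Psi^\sigma$ is a Leonard system associated with the pair $A^*,A$, hence of the form $\Theta^*$ for a unique Leonard system $\Theta$ associated with $A,A^*$; set $\tau(\Psi)=\Theta$, so $\Psi^\sigma=\tau(\Psi)^*$. From the definitions of $\downarrow,\Downarrow,*$ one checks directly that $\sigma$ commutes with each of $\downarrow,\Downarrow,*$, and that $(\Xi^*)^\downarrow=(\Xi^\Downarrow)^*$ and $(\Xi^*)^\Downarrow=(\Xi^\downarrow)^*$ for every Leonard system $\Xi$. Combining these facts yields the key commutation rules $\tau(\Psi^\downarrow)=\tau(\Psi)^\Downarrow$ and $\tau(\Psi^\Downarrow)=\tau(\Psi)^\downarrow$, together with $\tau^{2}=\mathrm{id}$. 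So $\tau$ is an involution on our $4$-element set.

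Parametrize the four systems by pairs $(\epsilon,\epsilon')\in\{0,1\}^{2}$ via $\Phi^{(0,0)}=\Phi$, $\Phi^{(1,0)}=\Phi^\downarrow$, $\Phi^{(0,1)}=\Phi^\Downarrow$, $\Phi^{(1,1)}=\Phi^{\downarrow\Downarrow}$, and write $\tau(\Phi)=\Phi^{(a,b)}$. The swap rules above give $\tau(\Phi^{(\epsilon,\epsilon')})=\Phi^{(a+\epsilon',\,b+\epsilon)}$, with arithmetic modulo $2$. The condition $\tau^{2}=\mathrm{id}$ then forces $a=b$, after which the fixed-point equation $\epsilon+\epsilon'\equiv a\pmod 2$ always has solutions (two of them). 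Pick any fixed point and call it $\Phi'$. Then $(\Phi')^\sigma=(\Phi')^*$, which says precisely that $\sigma$ is an isomorphism of Leonard systems from $\Phi'$ to $(\Phi')^*$. Hence $\Phi'$ is self-dual in the sense of Definition \ref{def:sd}, and $\Phi'$ is associated with $A,A^*$ by construction.

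The routine ingredients are the commutation checks among $\sigma,\downarrow,\Downarrow,*$; the step I expect to require the most care is the parity argument showing that the precise way $\tau$ interacts with $\downarrow$ and $\Downarrow$, together with $\tau^{2}=\mathrm{id}$, forces $a=b$ and thereby produces a fixed point of $\tau$.
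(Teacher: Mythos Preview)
Your argument is correct, but the paper takes a much shorter and more direct route. Rather than starting from an arbitrary associated Leonard system and running a fixed-point argument on the four candidates, the paper simply \emph{constructs} the self-dual one: choose any standard ordering $\{E_i\}_{i=0}^d$ of the primitive idempotents of $A$, and \emph{define} $E^*_i = E_i^\sigma$ for $0 \le i \le d$. Since $\sigma$ sends $A$ to $A^*$, the sequence $\{E_i^\sigma\}$ is automatically a standard ordering of the primitive idempotents of $A^*$, so $\Phi = (A;\{E_i\};A^*;\{E^*_i\})$ is a Leonard system. Then $\Phi^\sigma = (A^*;\{E^*_i\};A;\{E_i^{\sigma^2}\}) = (A^*;\{E^*_i\};A;\{E_i\}) = \Phi^*$ (using $\sigma^2=\mathrm{id}$, which you also observed), and one is done in three lines.

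What your approach buys is a structural picture: you show that the involution $\tau$ on the four associated systems, determined by $\sigma$, always has exactly two fixed points, and you locate them explicitly in terms of the pair $(a,b)$. This is more information than the lemma asks for, and the parity computation is pleasant. The cost is that the argument is noticeably longer and requires checking several commutation identities among $\sigma,\downarrow,\Downarrow,*$, whereas the paper sidesteps all of that by building $\{E^*_i\}$ directly from $\{E_i\}$ via $\sigma$.
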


\begin{proof}
Let $\sigma$ denote an isomorphism of Leonard pairs from  $A,A^*$ to $A^*, A$.
Let $\{E_i\}_{i=0}^d$ denote a standard ordering of the primitive idempotents of $A$.
For $0 \leq i \leq d$ define $E^*_i = E^\sigma_i$.
Then
$\Phi = (A: \{E_i\}_{i=0}^d; A^* ; \{E^*_i\}_{i=0}^d)$
is a Leonard system that is associated to $A,A^*$.
By construction $\Phi^\sigma = \Phi^*$, so $\Phi$ is isomorphic to $\Phi^*$.
Thus $\Phi$ is self-dual.
\end{proof}

\begin{lemma}   {\rm (See \cite[Lemma 7.4]{NT:sd}.) }
\label{lem:sd}    \samepage
\ifDRAFT {\rm lem:sd}. \fi
Recall the Leonard system $\Phi$ from \eqref{eq:Phi}.
Then $\Phi$ is self-dual if and only if
\[
\th_i = \th^*_i
\qquad\qquad\qquad
(0 \leq i \leq d).
\]
If this case,
\[
\phi_i = \phi_{d-i+1}
\qquad\qquad
(1 \leq i \leq d).
\]
\end{lemma}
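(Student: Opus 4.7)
The plan is to apply Lemma \ref{lam:LPclassify}, which says that a Leonard system is determined up to isomorphism by its parameter array. By Definition \ref{def:sd}, $\Phi$ is self-dual iff $\Phi$ is isomorphic to $\Phi^*$; equivalently, iff $\Phi$ and $\Phi^*$ have the same parameter array. The parameter array of $\Phi$ is $(\{\th_i\};\{\th^*_i\};\{\vphi_i\};\{\phi_i\})$, and by Lemma \ref{lem:parray}(iii) the parameter array of $\Phi^*$ equals $(\{\th^*_i\};\{\th_i\};\{\vphi_i\};\{\phi_{d-i+1}\})$. Comparing these termwise, self-duality is equivalent to the conjunction
\begin{equation*}
\th_i = \th^*_i \quad (0 \leq i \leq d) \qquad\text{and}\qquad \phi_i = \phi_{d-i+1} \quad (1 \leq i \leq d).
\end{equation*}
This at once yields the forward (``only if'') direction of the stated lemma together with the ``in this case'' conclusion about the $\phi_i$.

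For the reverse direction, one must verify that the eigenvalue coincidence $\th_i = \th^*_i$ already implies the split-sequence symmetry $\phi_i = \phi_{d-i+1}$. My approach is to use the type classification of \cite[Theorem 5.16]{T:parray} recorded in Section \ref{sec:types}: the underlying scalar parameters of the type of $\Phi$ are determined by the triple $(\{\th_j\}, \{\th^*_j\}, \{\vphi_i\})$, and hence so is the sequence $\{\phi_i\}$. Applied to both $\Phi$ and $\Phi^*$, which share these first three components as soon as $\th_i = \th^*_i$, this rigidity forces the two $\{\phi_i\}$ sequences to agree, yielding $\phi_i = \phi_{d-i+1}$.

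The main obstacle is justifying the rigidity step---that $\{\phi_i\}$ is recoverable from the triple $(\{\th_j\}, \{\th^*_j\}, \{\vphi_i\})$---across all thirteen types. This amounts to a case-by-case check using the explicit formulas in Section \ref{sec:types} to back out the type-specific scalar parameters (such as $q, h, h^*, r_1, r_2, s, s^*$) from the given data and then substitute into the $\phi_i$-formula. This verification is the substance of \cite[Lemma 7.4]{NT:sd}, to which the present lemma is attributed.
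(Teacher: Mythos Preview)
The paper itself offers no proof of this lemma, only the citation to \cite[Lemma 7.4]{NT:sd}, so there is nothing to compare your approach against directly. Your overall strategy---reducing self-duality to equality of parameter arrays via Lemma \ref{lam:LPclassify} and Lemma \ref{lem:parray}(iii)---is the natural one and is correct.

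That said, your justification of the rigidity step (that $\{\phi_i\}$ is determined by the triple $(\{\th_j\},\{\th^*_j\},\{\vphi_i\})$) is heavier than necessary. You propose a case-by-case verification across the thirteen types, but this can be avoided entirely: by Lemma \ref{lem:defvphi}, the triple $(\{\th_j\},\{\th^*_j\},\{\vphi_i\})$ already pins down the matrices representing $A$ and $A^*$ with respect to a $\Phi$-split basis, and therefore determines the Leonard system $\Phi$ up to isomorphism. In particular it determines the second split sequence $\{\phi_i\}$. With this observation, once $\th_i=\th^*_i$ for all $i$, both $\Phi$ and $\Phi^*$ share the same first three components of the parameter array, hence the same fourth, giving $\phi_i=\phi_{d-i+1}$ without any recourse to the type classification.
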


\begin{lemma}   \label{lem:typesd}   \samepage
\ifDRAFT {\rm lem:typesd}. \fi
Recall the Leonard system $\Phi$ from \eqref{eq:Phi}.
If $\Phi$ is self-dual, then $\Phi$ has one of the following types:
\begin{center}
$q$-Racah,
\quad
affine $q$-Krawtchouk,
\quad
Racah,
\quad
Krawtchouk,
\quad
Bannai/Ito,
\quad
Orphan.
\end{center}
For each of the above types,  $\Phi$ is self-dual if and only if $\th_0 =\th^*_0$ and 
the following condition holds:
\begin{center}
\begin{tabular}{c|c}
 Type  &  Condition
\\ \hline
$q$-Racah &  \quad $h=h^* \quad$  and $\quad s=s^*$    \rule{0mm}{3.5ex}
\\
affine $q$-Krawtchouk                       \rule{0mm}{3.2ex}
&
\quad $h=h^*$
\\
Racah & \quad $h=h^*$\quad and \quad  $ s=s^*$  \rule{0mm}{3.2ex}
\\
Krawtchouk &  $ s=s^*$  \rule{0mm}{3.2ex}
\\
Bannai/Ito & $\quad h = h^*$ \quad and  \quad $s=s^*$  \rule{0mm}{3.2ex}
\\
Orphan &  $\quad h = h^*$ \quad and  \quad $s=s^*$  \rule{0mm}{3.2ex}
\end{tabular}
\end{center}
\end{lemma}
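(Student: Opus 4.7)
The plan is to apply Lemma \ref{lem:sd}, which reduces the self-duality of $\Phi$ to the condition $\th_i = \th^*_i$ for $0 \leq i \leq d$. Since $d \geq 3$, this furnishes at least four equations in $i$, enough to run linear-independence arguments. Setting $i=0$ always forces $\th_0 = \th^*_0$, so once this is noted, the remaining task is to analyze when $\th_i - \th^*_i - (\th_0 - \th^*_0) = 0$ for $1 \leq i \leq d$, type by type, using the explicit formulas from Section \ref{sec:types}.

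For each type, after substitution the quantity $\th_i - \th^*_i$ becomes a linear combination of simple functions of $i$, drawn from $\{1,\ q^i,\ q^{-i},\ i,\ i^2,\ (-1)^i,\ i(-1)^i\}$. Since $d \geq 3$, the relevant sets of these functions are linearly independent over $\{0,1,\dots,d\}$ (with the usual caveats on $q$ and on the characteristic built into each type). So the vanishing of all coefficients gives a finite system in the type parameters, and one reads off whether the system is solvable.

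For the seven types \emph{$q$-Hahn}, \emph{dual $q$-Hahn}, \emph{quantum $q$-Krawtchouk}, \emph{$q$-Krawtchouk}, \emph{dual $q$-Krawtchouk}, \emph{Hahn}, \emph{dual Hahn}, the formulas for $\th_i$ and $\th^*_i$ differ in structure: one side carries an extra factor such as $(1-sq^{i+1})$ or $(i+1+s)$ that the other lacks. Matching coefficients forces one of the nonzero parameters of the type (typically $s$, $s^*$, or a related $r$) to vanish, contradicting the type's own nonvanishing hypothesis. This rules out self-duality in all seven cases. For the six remaining types \emph{$q$-Racah}, \emph{affine $q$-Krawtchouk}, \emph{Racah}, \emph{Krawtchouk}, \emph{Bannai/Ito}, \emph{Orphan}, the formulas for $\th_i$ and $\th^*_i$ share the same structural form but with $(h,s)$ on one side and $(h^*,s^*)$ on the other. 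Matching coefficients yields exactly the conditions in the table: $h=h^*$ and/or $s=s^*$, with $h$ omitted for Krawtchouk (since $h$ never appears in $\th_i$ there) and $s$ omitted for affine $q$-Krawtchouk (since $s$ never appears).

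The main obstacle is just the bookkeeping across thirteen types, not any conceptual difficulty. The most delicate case is Bannai/Ito, where $\th_i$ carries the sign $(-1)^i$; here the coefficient-matching must be carried out by separating even and odd $i$, and one must confirm that $d \geq 3$ supplies enough indices of each parity to force the desired equalities. The Orphan case is handled by direct inspection of the four explicit equations at $i=0,1,2,3$, noting that $h = h^*$ follows from $i=2$ and then $s=s^*$ from $i=3$ using $h \neq 0$. With those details in place, the combined statement of the lemma follows at once from Lemma \ref{lem:sd}.
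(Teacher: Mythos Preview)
Your proposal is correct and follows essentially the same approach as the paper: the paper's proof is the single line ``Routine verification using Lemma \ref{lem:sd} and the data in Section \ref{sec:types},'' and you have simply spelled out what that routine verification consists of, namely imposing $\th_i=\th^*_i$ for all $i$ and matching coefficients of the basic functions $q^i,\,q^{-i},\,i,\,i^2,\,(-1)^i,\,i(-1)^i$ type by type.
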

\begin{proof}
Routine verification 
using Lemma \ref{lem:sd}
and the data in Section \ref{sec:types}.
\end{proof}

\section{Spin Leonard pairs}
\label{sec:spinLP}
\ifDRAFT {\rm sec:spinLP}> \fi

In this section, we recall  the notion of a spin Leonard pair.
We then characterize the spin Leonard pairs using the  zero diagonal space.

\begin{defi}  {\rm (See \cite[Definition 1.2]{Cur}.) }
\label{def:spinLP}   \samepage
\ifDRAFT {\rm def:spinLP}. \fi
Let $A,A^*$ denote a Leonard pair on $V$.
This Leonard pair is said to have {\em spin} 
whenever there exist invertible $W, W^* \in \text{\rm End}(V)$
such  that
\begin{align*}
 W A & = A W, 
\\
 W^* A^* &= A^* W^*,  
\\
W A^* W^{-1} &= (W^*)^{-1} A W^*.  
\end{align*}
\end{defi}

\begin{lemma}   {\rm (See \cite[Theorem 1.6]{Cur}, \cite[Lemma  9.2(iii)]{Cur2}.) }
\label{lem:SDpre}   \samepage
\ifDRAFT {\rm lem:SDpre}. \fi
Let $A,A^*$ denote a spin Leonard pair on $V$.
Then  $A,A^*$ is self-dual.
\end{lemma}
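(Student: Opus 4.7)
The plan is to exhibit a Leonard system $\Phi$ associated with $A,A^*$ whose eigenvalue sequence coincides with its dual eigenvalue sequence. By Lemma \ref{lem:sd} such a $\Phi$ is self-dual, and unwinding the definitions (as in the proof of Lemma \ref{lem:sdLP}) the algebra isomorphism $\sigma : \Phi \to \Phi^*$ restricts to an isomorphism of Leonard pairs from $A,A^*$ to $A^*,A$, showing that $A,A^*$ is self-dual.

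First I would combine the three spin relations into a single intertwining identity. Multiplying the third relation $W A^* W^{-1} = (W^*)^{-1} A W^*$ on the left by $W^*$ and on the right by $W$ yields $UA^* = AU$, where $U := W^* W$ is invertible. In particular $A$ and $A^*$ are similar, hence share a common spectrum $\{\th^*_0, \ldots, \th^*_d\}$. Fix a standard ordering $\{E^*_i\}_{i=0}^d$ of the primitive idempotents of $A^*$ and define $E_i := U E^*_i U^{-1}$ for $0 \leq i \leq d$. The intertwining $UA^* = AU$ forces each $E_i$ to be the primitive idempotent of $A$ for the eigenvalue $\th^*_i$, so the eigenvalue sequence associated with $\{E_i\}_{i=0}^d$ is exactly $\{\th^*_i\}_{i=0}^d$.

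The central step is to prove that the ordering $\{E_i\}_{i=0}^d$ is standard for $A$, so that $\Phi := (A; \{E_i\}_{i=0}^d; A^*; \{E^*_i\}_{i=0}^d)$ is a Leonard system. By Lemma \ref{lem:trid0} this reduces to establishing the tridiagonal zero/nonzero pattern of $E_i A^* E_j$. I would exploit that $W$, commuting with the multiplicity-free $A$ (Lemma \ref{lem:mfree}), is a polynomial in $A$ and acts as a nonzero scalar on each eigenspace of $A$; likewise $W^*$ is a polynomial in $A^*$ and acts as a nonzero scalar on each eigenspace of $A^*$. Using $W^* A^* = A^* W^*$ one obtains the simplification $U^{-1} A^* U = W^{-1} A^* W$. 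Substituting into $E_i A^* E_j = U E^*_i (U^{-1} A^* U) E^*_j U^{-1}$ and expanding $W^{-1} A^* W$ in terms of the original Leonard system (for which $\{E^*_i\}$ is dual-standard), the tridiagonal pattern of $A^*$ with respect to $\{E^*_i\}$ transfers to the required pattern for $\{E_i\}$; if the transfer reverses the ordering, one replaces $\{E^*_i\}$ by $\{E^*_{d-i}\}$, which is also standard. Lemma \ref{lem:sd} then gives self-duality of $\Phi$, completing the argument.

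The main obstacle is the tridiagonality verification. The intertwiner $UA^* = AU$ is clean, but the factorization $U = W^* W$ mixes the commutative algebras $\F[A]$ and $\F[A^*]$, which do not commute with one another; the rearrangement needed to reduce $E_i A^* E_j$ to an expression controlled by $E^*_i A^* E^*_j$ (or a close variant) must be carried out carefully, using the scalar actions of $W, W^*$ on eigenspaces at each step.
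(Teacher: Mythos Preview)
The paper does not give a proof of this lemma; it is quoted from \cite{Cur} and \cite{Cur2}. So there is no in-paper argument to compare against, and the question is simply whether your proposed proof works. It does not, and the gap is exactly at the step you flag as the ``main obstacle''.

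Your reduction is correct through
\[
E_iA^*E_j \;=\; U\,E^*_i\,(W^{-1}A^*W)\,E^*_j\,U^{-1},
\]
so the issue is whether $E^*_i(W^{-1}A^*W)E^*_j$ has the irreducible tridiagonal zero/nonzero pattern in $i,j$. Your justification (``the tridiagonal pattern of $A^*$ with respect to $\{E^*_i\}$ transfers'') is where the argument collapses: $A^*$ is \emph{diagonal}, not irreducibly tridiagonal, with respect to $\{E^*_i\}$, so there is no pattern to transfer. What is tridiagonal is $F_kA^*F_l$ for the given standard ordering $\{F_k\}$ of the primitive idempotents of $A$, and conjugating $A^*$ by $W\in\F[A]$ preserves \emph{that} pattern (since $W$ commutes with each $F_k$); it says nothing about the pattern relative to $\{E^*_i\}$.

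Unwinding further: your $E_i$ is the primitive idempotent of $A$ for the eigenvalue $\th^*_i$, so $E_i=F_{\pi(i)}$ where $\pi$ is the permutation with $\th^*_i=\th_{\pi(i)}$, and $\{E_i\}$ is standard exactly when $\pi$ is the identity or the reversal. Thus what you must show is that once $\{\th_i\}$ and $\{\th^*_i\}$ agree as sets, they agree as ordered sequences up to reversal. Nothing in your computation establishes this: after the simplification $U^{-1}A^*U=W^{-1}A^*W$ (which uses only $W^*\in\F[A^*]$), the only spin information left in play is the bare intertwining $UA^*=AU$, i.e.\ that $A$ and $A^*$ are similar. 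The proofs in \cite{Cur,Cur2} use more of the spin structure than this; some argument of comparable strength is needed to pin down $\pi$, and your ``scalar action of $W,W^*$ on eigenspaces'' remark does not supply it.
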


\begin{prop}   {\rm (See \cite[Lemmas 1.7--1.11, 1.13]{Cur}.) }
\label{prop:spinLP}    \samepage
\ifDRAFT {\rm prop:spinLP}. \fi
Recall the Leonard system $\Phi$ from \eqref{eq:Phi},
and assume that $\Phi$ is self-dual.
If $A,A^*$ has spin,
then $\Phi$ has one of the following types:
\begin{center}
$q$-Racah,
\quad
Racah,
\quad
Krawtchouk,
\quad
Bannai/Ito.
\end{center}
If $\Phi$ has Krawtchouk type, then $A,A^*$ has spin.
For the remaining types, $A,A^*$ has spin if and only if
the following condition holds:
\begin{center}
\begin{tabular} {c|c}
Type & Condition
\\ \hline
$q$-Racah
&
$s=r_1^2$ \quad or \quad $s=r_2^2$                   \rule{0mm}{3.5ex}
\\
Racah
&
$s= 2 r_1$ \quad or \quad $s=2 r_2$                 \rule{0mm}{3.2ex}
\\ 
Bannai/Ito
&
\quad $s= -2 r_1$ \quad or \quad $s= - 2 r_2$              \rule{0mm}{3.2ex}
\end{tabular}
\end{center}
\end{prop}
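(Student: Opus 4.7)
Since $\Phi$ is self-dual, by Lemma \ref{lem:sd} we may work in a setting where $\theta_i=\theta^*_i$ for $0\le i\le d$. The plan is to reduce the spin condition to a system of scalar equations in the parameter array and then to run through the types listed in Section~\ref{sec:types}.

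First, because $A$ is multiplicity-free, its centralizer in $\text{End}(V)$ is the algebra of polynomials in $A$, so $W A = A W$ with $W$ invertible forces $W = \sum_{i=0}^{d} w_i E_i$ with all $w_i \in \F^{\times}$; dually, $W^* = \sum_{i=0}^{d} w^*_i E^*_i$ with all $w^*_i\in\F^{\times}$. I would then translate the third spin relation $W A^* W^{-1}=(W^*)^{-1}AW^*$ block-by-block: using Lemma \ref{lem:trid0}, on the one hand $E_i(WA^*W^{-1})E_j = (w_i/w_j)\,E_i A^* E_j$, which is nonzero precisely when $|i-j|=1$, and on the other hand $E^*_p((W^*)^{-1}AW^*)E^*_q=(w^*_q/w^*_p)\,E^*_p A E^*_q$. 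Equating the two sides and comparing matrix entries with respect to a $\Phi$-split basis (Lemma \ref{lem:defvphi}) converts spin into explicit multiplicative recursions for the ratios $\alpha_i=w_i/w_{i-1}$ and $\alpha^*_i=w^*_i/w^*_{i-1}$ in terms of $\vphi_i$, $\phi_i$, $\th_i$, $\th^*_i$.

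Next I would exploit self-duality to cut the data in half: the recursions for $\alpha_i$ and $\alpha^*_i$ become mirror images of one another, and solvability of the resulting system reduces to a single compatibility equation linking the sequences $\{\th_i\}$ and $\{\vphi_i\}$. At this point the general theory of Leonard pairs enters: the abstract results (i), (ii) recalled in the introduction are already consequences of the spin condition, so I may assume property (ii) of the introduction (the existence of $f_0,\dots,f_3$) together with self-duality. Combining Proposition~\ref{prop:cond} with Theorems~\ref{thm:main1}--\ref{thm:main} and Lemma~\ref{lem:typesd} confines $\Phi$ to the intersection of the self-dual types (Lemma~\ref{lem:typesd}) with the types for which ${\mathcal Z}(\Phi)\neq 0$ (Theorems~\ref{thm:main1}, \ref{thm:main}), and this intersection is exactly $\{q$-Racah, Racah, Krawtchouk, Bannai/Ito$\}$.

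Finally I would derive the parameter conditions type by type. For each type in the above list, substitute the explicit expressions for $\th_i, \th^*_i, \vphi_i, \phi_i$ from Definitions \ref{def:qRacah}, \ref{def:Racah}, \ref{def:Krawt}, \ref{def:BI} into the compatibility equation from the previous paragraph. Using that $\Phi$ is self-dual (so $h=h^*$ and $s=s^*$, resp.\ $s=s^*$ in the Krawtchouk case, by Lemma~\ref{lem:typesd}), the condition collapses to a polynomial identity in $s$ and $r_1,r_2$. For Krawtchouk the identity is automatic, giving spin for free; for the remaining three types it factors and forces $s=r_1^2$ or $s=r_2^2$ ($q$-Racah), $s=2r_1$ or $s=2r_2$ (Racah), $s=-2r_1$ or $s=-2r_2$ (Bannai/Ito). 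Conversely, under each such condition one produces explicit scalars $w_i, w^*_i$ (read off from the recursions) giving the required $W, W^*$. The main obstacle is the bookkeeping in this last step: keeping track of which $r_j$ and which sign conventions produce a consistent solution of the recursion in each of the four type-cases, and verifying invertibility of the resulting $W, W^*$ under precisely the inequalities already present in Definitions~\ref{def:qRacah}, \ref{def:Racah}, \ref{def:Krawt}, \ref{def:BI}.
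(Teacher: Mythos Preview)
The paper does not actually prove this proposition; it is quoted from \cite{Cur} as an external input (note the citation in the header and the absence of any proof in the text), so there is no paper-side argument to compare against. Your first two paragraphs are a sensible setup for a genuine proof, but the third paragraph contains a circularity that undermines the whole outline. You write that ``the abstract results (i), (ii) recalled in the introduction are already consequences of the spin condition'' and then feed (ii) into Lemma~\ref{lem:cond2} and Theorems~\ref{thm:main1}--\ref{thm:main} to pin down the type list. But the introduction attributes the implication spin $\Rightarrow$ (ii) to \cite[Lemmas~1.7--1.11, Theorem~1.13]{Cur}, which is exactly the citation attached to the proposition you are proving. In other words, you are assuming Proposition~\ref{prop:spinLP} in order to prove Proposition~\ref{prop:spinLP}. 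The same circularity reappears if you try to shortcut via ${\mathcal Z}(\Phi)\neq 0$: the equivalence of spin with self-duality plus ${\mathcal Z}(\Phi)\neq 0$ is Theorem~\ref{thm:characterize}, whose proof explicitly invokes the present proposition.

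A self-contained argument has to stay with the recursions you set up in paragraph two and carry them through by hand. Writing $W=\sum_i w_i E_i$ and $W^*=\sum_i w^*_i E^*_i$ and expanding $WA^*W^{-1}=(W^*)^{-1}AW^*$ in a $\Phi$-standard basis gives multiplicative recursions for $w_i/w_{i-1}$ and $w^*_i/w^*_{i-1}$ in terms of the intersection numbers $b_i,c_i$ (this is what \cite{Cur} does). The existence of an invertible solution is then a concrete constraint on $b_i,c_i$ that must be checked type by type against the data in Section~\ref{sec:types}; the factorizations $s=r_1^2$ etc.\ emerge from that computation, not from ${\mathcal Z}(\Phi)$. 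Your final paragraph gestures at this (``read off from the recursions''), but since you never wrote the recursions down explicitly, the substantive work---including the verification that the resulting $w_i, w^*_i$ are all nonzero under precisely the inequalities in Definitions~\ref{def:qRacah}, \ref{def:Racah}, \ref{def:Krawt}, \ref{def:BI}---remains to be done.
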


\begin{note}   \label{note:typo}   \samepage
\ifDRAFT {\rm note:typo}. \fi
In \cite[Lemma 1.11]{Cur},
there is a typo.
In the formula for $b_i$ ($i$ odd),
the minus sign should be removed.
\end{note}

\begin{theorem}   \label{thm:characterize}   \samepage
\ifDRAFT {\rm thm:characterize}. \fi
Let $A,A^*$ denote a Leonard pair on $V$.
Then the following are equivalent:
\begin{itemize}
\item[\rm (i)]
$A,A^*$ has spin;
\item[\rm (ii)]
$A,A^*$ is self-dual
and ${\mathcal Z}(A,A^*) \neq 0$.
\end{itemize}
\end{theorem}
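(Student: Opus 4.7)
The plan is to prove the two implications separately, in each case reducing to the type classification and matching parameter conditions across the tables in Sections \ref{sec:main}, \ref{sec:SD}, and \ref{sec:spinLP}.

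For $(\text{i}) \Rightarrow (\text{ii})$, self-duality of $A,A^*$ is immediate from Lemma \ref{lem:SDpre}. To show ${\mathcal Z}(A,A^*) \neq 0$, I would use Lemma \ref{lem:sdLP} to pick an associated Leonard system $\Phi$ that is self-dual. Lemma \ref{lem:typesd} then restricts $\Phi$ to one of the self-dual types and, in each such type, yields the parameter identity $s = s^*$. Proposition \ref{prop:spinLP} further restricts the type to $q$-Racah, Racah, Krawtchouk, or Bannai/Ito, and imposes a specific relation on $s$: namely $s = r_j^2$, $s = 2 r_j$, no extra condition, or $s = -2 r_j$ respectively for some $j \in \{1,2\}$. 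Substituting $s = s^*$ converts each such relation into the corresponding condition on $s^*$ appearing in Theorems \ref{thm:main1} and \ref{thm:main}, which by those theorems is equivalent to ${\mathcal Z}(\Phi) \neq 0$. Since ${\mathcal Z}(A,A^*) = {\mathcal Z}(\Phi)$ by Definition \ref{def:ZLP}, assertion (ii) follows.

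For $(\text{ii}) \Rightarrow (\text{i})$, I would again invoke Lemma \ref{lem:sdLP} to fix a self-dual associated Leonard system $\Phi$. By Lemma \ref{lem:typesd}, the type of $\Phi$ is one of $q$-Racah, affine $q$-Krawtchouk, Racah, Krawtchouk, Bannai/Ito, or Orphan. Theorem \ref{thm:main2} forces ${\mathcal Z}(\Phi) = 0$ for the affine $q$-Krawtchouk and Orphan types, so the hypothesis ${\mathcal Z}(A,A^*) \neq 0$ eliminates those two possibilities. For each of the four remaining types, Theorems \ref{thm:main1} and \ref{thm:main} translate ${\mathcal Z}(\Phi) \neq 0$ into an explicit condition on $s^*$; the self-duality identity $s = s^*$ from Lemma \ref{lem:typesd} then yields precisely the condition on $s$ that Proposition \ref{prop:spinLP} requires for $A,A^*$ to have spin. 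Hence (i) holds.

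The main obstacle is nothing deeper than the careful cross-referencing of three classification tables: the one characterizing ${\mathcal Z}(\Phi) \neq 0$ by type (Theorems \ref{thm:main1}--\ref{thm:main}), the self-duality table (Lemma \ref{lem:typesd}), and the spin table (Proposition \ref{prop:spinLP}). No essentially new idea is introduced; the content of the theorem is organizational, showing that once the spin property, self-duality, and the zero diagonal space have each been classified type by type, the combined data of self-duality and ${\mathcal Z}(A,A^*) \neq 0$ is exactly what is needed to isolate the spin Leonard pairs.
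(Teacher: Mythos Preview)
Your proposal is correct and follows essentially the same approach as the paper's proof: reduce to a self-dual associated Leonard system via Lemmas \ref{lem:SDpre} and \ref{lem:sdLP}, then cross-reference the type-by-type tables in Section \ref{sec:main}, Lemma \ref{lem:typesd}, and Proposition \ref{prop:spinLP}. The paper's proof is terser (it simply cites these three sources together without spelling out the matching), whereas you have made the parameter substitution $s = s^*$ and the case-by-case correspondence explicit; one small imprecision is that not every self-dual type in Lemma \ref{lem:typesd} carries a parameter $s$ (affine $q$-Krawtchouk does not), but this is harmless since Proposition \ref{prop:spinLP} eliminates that type before $s = s^*$ is actually used.
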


\begin{proof}
(i) $\Rightarrow$ (ii)
By Lemma \ref{lem:SDpre}, the Leonard pair $A,A^*$ is self-dual.
By this and Lemma \ref{lem:sdLP}, there exists a self-dual Leonard system $\Phi$
associated with $A,A^*$.
By Section \ref{sec:main},
 Lemma \ref{lem:typesd},
and  Proposition \ref{prop:spinLP},
we get ${\mathcal Z}(\Phi)  \neq 0$.
By this and Definition \ref{def:ZLP}, ${\mathcal Z}(A,A^*) \neq 0$.

(ii) $\Rightarrow$ (i)
By Lemma \ref{lem:sdLP}
there exists a self-dual Leonard system  $\Phi$ associated with $A,A^*$.
By  ${\mathcal Z}(A,A^*) \neq 0$ and Definition \ref{def:ZLP}, ${\mathcal Z}(\Phi) \neq 0$.
By this and Section \ref{sec:main}, Lemma \ref{lem:typesd},
and Proposition \ref{prop:spinLP},
we find that $A,A^*$ has spin.
\end{proof}

\begin{corollary}    \label{cor:cheracterize}   \samepage
\ifDRAFT {\rm cor:characterize}. \fi
Let
$\Phi = ( A; \{E_i\}_{i=0}^d; A^*; \{E^*_i\}_{i=0}^d)$
denote  a self-dual Leonard system on $V$.
Then the following {\rm (i)--(iii)}
 are equivalent:
\begin{itemize}
\item[\rm (i)]
The Leonard pair $A,A^*$ has spin;
\item[\rm (ii)]
there exist scalars $f_0,f_1,f_2,f_3$ (not all zero) such that
\[
f_0 +f_1 \th^*_i + f_2 a_i + f_3 a_i \th^*_i = 0
\qquad\qquad
(0 \leq i \leq d);
\]
\item[\rm (iii)]
for $0 \leq i,j \leq d$,
\[
 (a_i - a_0)(\th^*_i - \th^*_d)(a_j - a_d)(\th^*_j - \th^*_0)
= (a_i - a_d)(\th^*_i - \th^*_0)(a_j - a_0)(\th^*_j - \th^*_d).
\]
\end{itemize}
\end{corollary}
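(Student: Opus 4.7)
The plan is to assemble this corollary directly from the results already established in the paper, so essentially no new work is needed: the three conditions are logical consequences of Theorem \ref{thm:characterize}, Lemma \ref{lem:cond2}, and Proposition \ref{prop:cond}, plus the observation that the self-dual hypothesis is already part of condition (i) by virtue of Lemma \ref{lem:SDpre}.

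First I would reduce (i) to a statement about the zero diagonal space. By Theorem \ref{thm:characterize}, the Leonard pair $A,A^*$ has spin if and only if $A,A^*$ is self-dual and $\mathcal{Z}(A,A^*)\neq 0$. Since by hypothesis $\Phi$ is self-dual, the associated Leonard pair $A,A^*$ is self-dual (by Definitions \ref{def:sdLP} and \ref{def:sd}). Moreover $\mathcal{Z}(A,A^*)=\mathcal{Z}(\Phi)$ by Definition \ref{def:ZLP}. Thus (i) is equivalent to $\mathcal{Z}(\Phi)\neq 0$.

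Next I would translate $\mathcal{Z}(\Phi)\neq 0$ into (ii) and (iii). The equivalence of $\mathcal{Z}(\Phi)\neq 0$ with (ii) is exactly Lemma \ref{lem:cond2}. For (iii), Proposition \ref{prop:cond} tells us that $\mathcal{Z}(\Phi)\neq 0$ is equivalent to the identity
\[
a^-_i\,a^+_j=a^+_i\,a^-_j\qquad(0\le i,j\le d).
\]
Substituting the definitions $a^-_i=(a_i-a_0)(\th^*_i-\th^*_d)$ and $a^+_i=(a_i-a_d)(\th^*_i-\th^*_0)$ from Definition \ref{def:amap} yields exactly the identity in (iii). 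So (iii) is equivalent to $\mathcal{Z}(\Phi)\neq 0$, and chaining the equivalences gives (i) $\Leftrightarrow$ (ii) $\Leftrightarrow$ (iii).

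There is really no obstacle here; the entire proof is one assembly step. The only thing to double-check is the identification of the quadratic identity in (iii) with the form $a^-_ia^+_j=a^+_ia^-_j$, which is immediate upon unfolding the definitions. I would present the argument as a short paragraph that cites Theorem \ref{thm:characterize}, Lemma \ref{lem:cond2}, Proposition \ref{prop:cond}, and Definitions \ref{def:amap} and \ref{def:ZLP}, without any further computation.
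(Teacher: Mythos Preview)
Your proposal is correct and follows essentially the same approach as the paper: the paper's proof simply cites Definition \ref{def:ZLP}, Lemma \ref{lem:cond2}, Proposition \ref{prop:cond}, and Theorem \ref{thm:characterize}, which is exactly the chain of equivalences you spell out. Your version is slightly more explicit in noting that the self-duality of $\Phi$ implies the self-duality of the pair $A,A^*$, but otherwise the arguments are identical.
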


\begin{proof}
By 
Definition \ref{def:ZLP}, 
Lemma \ref{lem:cond2},
Proposition \ref{prop:cond}, 
 and Theorem \ref{thm:characterize}.
\end{proof}

\begin{note}    \label{note:W}   \samepage
\ifDRAFT {\rm note:W}. \fi
For each spin Leonard pair, the corresponding $W, W^*$ from Definition \ref{def:spinLP}
are given in \cite[Lemma 1.17 and Theorem 1.18]{Cur}.
\end{note}

We finish this paper with some comments.
Recall our assumption $d \geq 3$.
It turns out that Corollary \ref{cor:cheracterize}  is false for $d=2$.
To see this, note that under the assumption $d=2$
the conditions (ii) and (iii) of Corollary \ref{cor:cheracterize} are vacuously true.
However under the assumption $d=2$ 
the condition (i) of Corollary \ref{cor:cheracterize} might not be true,
as the following example shows.
For this example we assume that the field $\F$ has characteristic zero.
Consider the algebra $\text{Mat}_3 (\F)$  of $3 \times 3$ matrices
that have all entries in $\F$. We index the rows and columns by $0,1,2$.
Consider the following matrices in $\text{Mat}_3 (\F)$:
\[
A =
\begin{pmatrix}
 1 & 0 & 0 \\
1 & 2 & 0  \\
0 & 1 & 5
\end{pmatrix},
\qquad\qquad
A^* =
\begin{pmatrix}
 1 & -1 & 0 \\
0 & 2 & -9 \\
0 & 0 & 5
\end{pmatrix}.
\]
Each of $A,A^*$ is multiplicity-free with eigenvalues $1,2,5$.
Using \eqref{eq:Ei} we find that the corresponding primitive
idempotents are
\begin{align*}
E_0 &=
\begin{pmatrix}
  1 & 0 &  0 \\
 -1 & 0 & 0 \\
 1/4 & 0 & 0
\end{pmatrix},
&
E_1 &=
\begin{pmatrix}
  0 & 0 &  0 \\
 1 & 1 & 0 \\
 -1/3 & -1/3 & 0
\end{pmatrix},
&
E_2 &=
\begin{pmatrix}
  0 & 0 &  0 \\
 0 & 0 & 0 \\
 1/12 & 1/3 & 1
\end{pmatrix},
\\
E^*_0 &=
\begin{pmatrix}
  1 & 1 &  9/4 \\
 0 & 0 & 0 \\
 0 & 0 & 0
\end{pmatrix},
&
E^*_1 &=
\begin{pmatrix}
  0 & -1 &  -3 \\
 0 & 1 & 3 \\
 0 & 0 & 0
\end{pmatrix},
&
E^*_2 &=
\begin{pmatrix}
  0 & 0 &  3/4 \\
 0 & 0 & -3 \\
 0 & 0 & 1
\end{pmatrix}.
\end{align*}
By matrix multiplication, we obtain
\begin{align*}
 E^*_i A E^*_j =
 \begin{cases}
  0 & \text{if $|i-j|>1$}, \\
 \neq 0 & \text{if $|i-j|=1$}
 \end{cases}
\qquad\qquad (0 \leq i,j \leq 2),
\\
 E_i A^* E_j =
 \begin{cases}
  0 & \text{if $|i-j|>1$}, \\
 \neq 0 & \text{if $|i-j|=1$}
 \end{cases}
\qquad\qquad (0 \leq i,j \leq 2).
\end{align*}
Therefore 
$\Phi = (A; \{E_i\}_{i=0}^2; A^* ; \{E^*_i\}_{i=0}^2)$
is a Leonard system.
By construction $\Phi$ is self-dual.
We are going to show that the Leonard pair $A,A^*$
does not have spin.
To do this, we assume that $A,A^*$ has spin and get a contradiction.
Consider the matrices $W$, $W^*$ from  Definition \ref{def:spinLP}.
We have  $W A = A W$ and  $W^* A^* = A^* W^*$ and
\begin{equation}
 W^* W A^* = A W^* W.     \label{eq:equ}
\end{equation}
The matrix $W$ commutes with $A$, and $A$ is multiplicity-free.
So $W$ is a polynomial in $A$.
Therefore $W \in \text{Span}\{E_0, E_1, E_2\}$.
Similarly
 $W^* \in \text{Span}\{E^*_0, E^*_1, E^*_2\}$.
There exist scalars $g_0, g_1, g_2$, $g^*_0, g^*_1, g^*_2$
such that
\[
  W  = \sum_{i=0}^2 g_i E_i,
\qquad\qquad
  W^* =  \sum_{i=0}^2 g^*_i E^*_i.
\]
Since $W$ and $W^*$ are invertible, we obtain
\[
g_i \neq 0,
\qquad\qquad
g^*_i \neq 0
\qquad \qquad
(0 \leq i \leq 2).
\]
In \eqref{eq:equ}, evaluate the $(2,2)$-entry of each side to get
\begin{equation}
 g_1 g^*_2 - g_2 g^*_1 = 0.                  \label{eq:22}
\end{equation}
In \eqref{eq:equ}, evaluate the  $(2,1)$-entry of each side to get
\begin{equation}
 -12 g_2 g^*_1 +(-3 g_0 + 4 g_1 - g_2) g^*_2 = 0.    \label{eq:21}
\end{equation}
In \eqref{eq:equ}, evaluate the  $(2,0)$-entry of each side to get
\begin{equation}
3(g_0 - g_2)  g^*_1 + (-3 g_0 + 4 g_1 - g_2) g^*_2 = 0.    \label{eq:20}
\end{equation}
In \eqref{eq:equ}, evaluate the  $(0,1)$-entry of each side to get
\begin{equation}
9(-g_0 + g_2) g^*_0 -4 ( g_0 + 3 g_2) g^*_1 + 3(- g_0 +   g_2) g^*_2 = 0.     \label{eq:01}
\end{equation}
In \eqref{eq:equ}, evaluate the  $(1,0)$-entry of each side to get
\begin{equation}
(-9g_0 -4g_1-3 g_2) g^*_0
+ 3(3 g_0 - 4 g_1 +  g_2) g^*_2 = 0.                     \label{eq:10}
\end{equation}
Combining \eqref{eq:22}--\eqref{eq:10} we get
$g_0 g^*_0 = 0$,
a contradiction.
Therefore the Leonard pair $A, A^*$  does not have spin.

\newpage

Kazumasa Nomura

Institute of Science Tokyo

Kohnodai, Ichikawa, 272-0827 Japan

email: knomura@pop11.odn.ne.jp

\medskip
Paul Terwilliger

Department of Mathematics

University of Wisconsin

480 Lincoln Drive

Madison, Wisconsin, 53706 USA

email: terwilli@math.wisc.edu

\bigskip

Keywords. Leonard pair, Leonard system, spin Leonard pair

2020 Mathematics Subject Classification. 05E30, 15B10.

\section{Statements and Declarations}

\noindent
{\bf Funding:} The authors declare that no funds, grants, or other support were received during
the preparation of this manuscript.

\medskip
\noindent
{\bf Competing interests:} The authors have no relevant financial or non-financial interests to
disclose.

\medskip
\noindent
{\bf Data availability:} All data generated or analyzed during this study are included in this
published article.

\end{document}